\documentclass{article}
\usepackage[T2A]{fontenc}
\usepackage[cp1251]{inputenc}
\usepackage[english,russian]{babel}
\usepackage[tbtags]{amsmath}
\usepackage{amsfonts,amssymb,mathrsfs,amscd}

\usepackage[all,arc,poly,2cell,curve,arrow,tips]{xy}
\usepackage[all,arc,poly,2cell,curve,arrow,tips]{xy}

%\mag1200
%-------------------------------------------------
\usepackage[hyper]{izv2e}
%\JournalName{СЕРИЯ МАТЕМАТИЧЕСКАЯ}
\JournalName{}
%Пустой аргумент приводит к исчезновению всех атрибутов журнала,
%файл можно представить в любой другой журнал

%-------------------------------------------------
\numberwithin{equation}{section}
%-------------------------------------------------
\theoremstyle{plain}
\newtheorem{theorem}{Theorem}
\newtheorem{lemma}{Lemma}[section]
\newtheorem{propos}{Proposition}

\newtheorem{cor}{Corollary}

%-------------------------------------------------
\theoremstyle{definition}
\newtheorem{definition}{Definition}
\newtheorem{proof}{Proof}
\newtheorem{remark}{Remark}
%-------------------------------------------------

\def\me{\operatorname{me}}

%-------------------------------------------------

\begin{document}

\title{A functional realization of the  Gelfand-Tsetlin base}
\author[D.\,V.~Artamonov]{D.\,V.~Artamonov}
\address{Lomonosov Moscow State University}
\email{artamonov@econ.msu.ru} %или \email[r_rrrr@rrr.ru]{r\_rrrr@rrr.ru}, если в адресе есть нижнее подчеркивание
%второй автор
%\author[I.\,I.~Ivanov]{И.\,И.~Иванов}
%\address{}
%\email{}

%\date{19.03.2008}
\udk{ 517.986.68}

\maketitle

\begin{fulltext}

\begin{abstract}

In the paper we consider  a realization of a finite dimensional irreducible representation of the Lie algebra  $\mathfrak{gl}_n$ in the space of functions on the group  $GL_n$.  It is proved that functions corresponding to Gelfand-Tsetlin diagrams  are linear combinations of some new functions of hypergeometric type which are closely related to   $A$-hypergeometric functions. These new functions are solution of a system of partial differential equations  which one  obtains from the  Gelfand-Kapranov-Zelevinsky  by an "antisymmetrization". The coefficients  in the constructed linear combination are hypergeometric constants i.e. they are values of some hypergeometric functions when instead  of all arguments ones are substituted.

Bibliography: 16 items.
\end{abstract}

\begin{keywords}
The Gelfand-Tsetlin base, hypergeometric functions, the Gelfand-Kapranov-Zelevinsky system.
\end{keywords}

\markright{The A-GKZ system and the GTs basis}

%\footnotetext[0]{Работа выполнена при поддержке РФФИ (грант \No~) и
%Программы поддержки ведущих научных школ РФ (грант \No~НШ-).}

\section{Inroduction}

In the year  1950 Gelfand and Tsetlin published a short paper   \cite{GC1},  where they gave an indexation of   base vectors in an irreducible finite-dimensional representation of the Lie algebra $\mathfrak{gl}_n$ and  presented formulas for the action of generators of the algebra in this base.  This paper does not contain a derivation of the presented formulas and it was not translated into English. Nevertheless the results of the paper became known in the West and there appeared attempts to  reproduce  the  construction   of the base vectors  and to reprove the  formulas for the action of  the generators. In year  1963 there appeared a paper by Biedenharn and Baird  \cite{1963}, where it was done.

% Стоит отметить, что формулы для действия генераторов весьма громоздкие. С ходу в них можно увидеть лишь такую структуру. Если  $f_i$ - базисные векторы Гельфанда-Цетлина, то %$E_{k,k-1}f_j=\sum_{i}Z^i_jf_i$,
%причем коэффициент имеет такую структуру: $Z_j^i=\frac{N_i}{N_j}C_j^i$, где $N_i$ - некоторый "нормировочный" множитель, приписываемый вектору $f_i$, а $C_j^i$ - некоторый "остаточный"
%коэффициент, имеющий более простую структуру, чем $Z_j^i$. Тем не менее, такие соображения не позволяют понять до конца структуру формул Гельфанда и Цетлина.

%%Гипергеометрические функции возникают в теории представлений в двух качествах.

In the Biedenharn's and Baird's  paper  \cite{1963}  in the case  $\mathfrak{gl}_3$  a very interesting derivation of Gelfand and Tsetlin's formulas is given.  Consider a realization of a representation in the space of function on the  group $GL_3$.   Then a function corresponding to Gelfand-Tsetlin base vectors  can be expressed though the Gauss' hypergeometric function  $F_{2,1}$\footnote{In the paper  \cite{1963} a realization of a representation  using the creation and annihilation operators is used, in the case  $\mathfrak{gl}_3$  these realizations are essentially equivalent}.    And the formulas for the action of generators turn out to be consequences of the contiguous relations for this function.

In  \cite{A1}  this approach is used to obtain explicit formulas for the Clebsh-Gordan coefficients for the algebra    $\mathfrak{gl}_3$.  Also this approach is used to obtain an explicit construction of an  infinite-dimensional  representation of  $\mathfrak{gl}_3$ (see \cite{tmph}).  There exist  generalizations of the results of   \cite{1963}  to the case of quantum algebras  (see \cite{t15}, \cite{t16}).  Recently their generalization to the case  $\mathfrak{sp}_4$ were obtained  \cite{ta1}.

In the 60-s it was not possible to obtain a generalization of these construction to the case   $\mathfrak{gl}_n$  since at this time the  theory of multivariate  hypergeometric function was not well-developed. The theory of   $A$-hypergeometric functions did not exist at this time and the system of  equation  for these functions i.e. the Gelfand-Kapranov-Zelevinsky system (the GKZ system for short) was not known. All these objects appeared only in the 80-s of the  $XX$ century.
In the present paper using these results we do generalize the results of Biedenharn and Baird to the case of  general   $n$.

%Именно, сперва показано, что для $\mathfrak{gl}_n$  функции, соответствующие базисным векторам Гельфада-Цетлина выражаются
%через гипергеометрические $\Gamma$-ряды% Гельфанда-Капранова-Зелевинского
 %(разложения в ряд  $A$-гипергеометрических функций).

The main result of the present paper is a formula for a function on the group that correspond to a Gelfand-Tsetlin base vector  (Theorems  \ref{gt1},\ref{gt2}).  This result makes possible to give a new derivation of formulas for the action of generators, to obtain formulas for the Clebsh-Gordan coefficients and so on.

The passage form the case $n=3$ to the case  $n>3$ needs new ideas and methods.  In the case  $n=3$ a formula for a function  $f$ corresponding to a Gelfand-Tsetlin diagram is derived using a presentation of   $f$  as a result of application of lowering operators to a highest vector.
 It is not possible to generalize these considerations  in the case $\mathfrak{gl}_3$ to the case $\mathfrak{gl}_n$, $n>3$  since the formulas for the lowering operators $\nabla_{n,k}$  become very complicated (see  \cite{zh}).   Also in the case  $n\geq 4$  there appears a  new difficulty because of the fact that argumets of the function  $f$, which are minors of a matrix,  are not independent, they satify the Plucker relations.

 A possible way to overcome these difficulties is to use ideas form the complex analysis to  find an analogue  in the case  $n\geq 4$ of a function corresponding to a Gelfand-Tsetlin diagram in the case   $n=3$.
  %Это сделать удается сделать, аналогом оказывается   $A$-гипергеометрическая функция.
%Но уже в случае  $\mathfrak{gl}_4$ оказывается, что получающаяся функция, соответствующая  диаграмме Гельфанда-Цетлина  не является $A$-гипергеометрической функцией.
%Чтобы найти функцию, соответствующую диаграмме Гельфанда-Цетлина,
This is done in the present paper.  A function of an element  $g\in GL_N$ by analogy with the case  $n=3$,  is  written as a function of minors of $g$.
We note that in the case  $\mathfrak{gl}_3$  the considered function can be written as an  $A$-hypergeometric function.   By analogy with the case  $\mathfrak{gl}_3$ for  $n>3$ we try to find the function of interest as an  $A$-hypergeometric  function of minors. This  function is defined as a sum of a series called a $\Gamma$-series.  A   $\Gamma$-series  is a sum of monomials divided by factorials of exponents.  And  the set of the exponents of monomials in this series  is a shifted lattice  in the space of all possible exponents.  It turns out that it is possible to relate with a Gelfand-Tsetlin diagram  a system of equations that defines a shifted lattice in the space of exponents  (see Section  \ref{agg}). Thus to each Gelfand-Tsetlin  diagram there corresponds a  $\Gamma$-series (which is actually  a finite sum). It is proved that the constructed  functions belong to a canonical embedding of an irreducible finite dimensional  representation  into the functional representation and form a base in it. But this approach does give a solution of the posed problem. Even in the case    $\mathfrak{gl}_4$  the construed functions do not correspond to Gelfand-Tsetlin base vectors. Nevertheless the constructed base is related to the Gelfand-Tsetlin base  by a transformation which is upper-triangular  relatively some order on diagrams  (see  Section  \ref{ra3}).

In order to prove that the constructed  $\Gamma$-series form a base in a representation a new system of PDE is constructed. This system is called the antysimmetrized GKZ system (A-GKZ for short, see Section \ref{agkz}).  We construct a base in the space of it's polynomial solutions. It turns out that there is a bijective correspondence between the constructed   $\Gamma$-series and the constructed basic solutions of the A-GKZ system (Theorem  \ref{agkz}).

Then we show that the constructed base solutions also belong to a canonical embedding of an irreducible finite dimensional  representation of $\mathfrak{gl}_n$ into the functional representation, they form a base in it. This base is related to the Gelfand-Tsetlin base using a low-triangular transformation (see Section \ref{fgc}).   We express a function corresponding to a Gelfand-Tsetlin diagram using these basic solutions.

Since the Gelfand-Tsetlin base is orthogonal relatively an invariant scalar product, the passage form the base consisting of basic solutions of A-GKZ to the Gelfand-Tsetlin base is nothing but the orthogonalization transformation.  To write  this transformation explicitly  one needs to find  scalar products between basic solutions   (see Section  \ref{isk}).  When it is done one finds a low-triangular change of coordinates that diagonalizes the bilinear form of the considered scalar product  (see Section \ref{itg}).
%Для нахождения этой замены мы некоторым образом модифицируем билинейную форму, в результате получается функция, удовлетворяющая системе типа А-ГКЗ. Знание базисных решений этой системы помогает найти диагонализирующую замену переменных.
Finally when one has the diagonalizing  change of coordinates one can find the corresponding orthogonal base which is nothing but the Gelfand-Tsetlin base (see Theorems  \ref{gt1},\ref{gt2}).

In these Theorems the functions corresponding to the  Gelfand-Tsetlin base vectors are expressed through the basic solutions of the  A-GKZ system using the numeric coefficients, which are written as sums of some series. In Section  \ref{cft}  we  try to convenience the reader that the obtained formula for the function corresponding to the Gelfand-Tsetlin diagrams is good enough.  The basic solutions of A-GKZ are Horn's hypergeometric function.
  And in Section \ref{cft} the coefficients occurring in Theorems   \ref{gt1},\ref{gt2} are discussed. It is shown that they are hypergeometric constants i.e. values of generalized hypergeometric functions  (in the Horn's sense)  when one substitutes ones instead of all their arguments.  And these generalized hypergeometric functions are expressed  (see \eqref{hrn})  through the Horn functions associated with the  $A$-hypergeometric function constructed in  Section   \ref{agg}.

\begin{remark}
One can overcome some difficulties caused by the Plucker relations by considering as in   \cite{1963} a realization using the creation and annihilation operators (or a realization based   Weyl construction). But the usage of depended minors has some fundamental  advantages. For example in this case one has  a simple description of the space of functions forming an irreducible representation   \cite{zh},  and also the presence of the relations  suggests some fundamental steps in construction of the functions of interest. 
\end{remark}

\section{ Preliminary facts}
\label{ra1}

In this Section some basic objects and construction are introduced. The Theorem for the case   $\mathfrak{gl}_3$ is formulated. This result was a starting point for the present paper.

\subsection{A realization in the space of functions on a group}
In the paper Lie groups and algebras over the field $\mathbb{C}$ are considered.

Function on the group  $GL_n$  form a representation of the group  $GL_n$. An element  $X\in GL_{n}$  acts onto a function $f(g)$, $g\in GL_n$   by a right shift

\begin{equation}
\label{xf} (Xf)(g)=f(gX).
\end{equation}

Passing to an infinitesimal version of this action one obtains an action of the Lie algebra  $\mathfrak{gl}_n$ on the space of all functions.

Every irreducible finite-dimensional representation can be realized as a sub-representation in the space of functions.
Let 
$[m_{1},...,m_{n}=0]$  be a highest weight, then in the space of functions there exists a highest vector with such a weight which is written as follows.

Let $a_{i}^{j}$, $i,j=1,...,n$  be a function of a matrix element on the group  $GL_{n}$.  Here $j$ is a row index and   $i$ is a column index.
Also put

\begin{equation}
\label{dete}
a_{i_1,...,i_k}:=det(a_i^j)_{i=i_1,...,i_k}^{j=1,...,k}.
\end{equation}

That is one takes a determinant of a sub-matrix in  a matrix $(a_i^j)$,
formed by rows   $1,...,k$  and columns
$i_1,...,i_k$.  An operator $E_{i,j}$  acts onto this determinant by changing the column indices 

\begin{equation}
\label{edet1}
E_{i,j}a_{i_1,...,i_k}=a_{\{i_1,...,i_k\}\mid_{j\mapsto i}},
\end{equation}

where $.\mid_{j\mapsto i}$ is substitution of   $j$  instead of 
$i$. If the index  $j$ does not occur in   $\{i_1,...,i_k\}$, then one obtains zero.

Using  \eqref{edet1}, one can easily see that the vector

\begin{equation}
\label{stv}
v_0=\frac{a_{1}^{m_{1}-m_{2}}}{(m_1-m_2)!}\frac{a_{1,2}^{m_{2}-m_{3}}}{(m_2-m_3)!}...\frac{a_{1,2,...,n-1}^{m_{n-1}}}{m_{n-1}!}
\end{equation}

is a highest vector for the algebra $\mathfrak{gl}_{n}$ with the weight
$[m_{1},m_{2},...,m_{n-1},0]$.  Thus one has a canonical embedding of an irreducible finite-dimensional representation  into the functional representation.

If one considered a highest weight with a non-zero component  $m_n$  then in all formulas below one must change   $m_{n-1}\mapsto m_{n-1}-m_{n}$ and multiply all expressions by  $a_{1,2,...,n}^{m_n}$. To make formulas less cumbersome we put $m_n=0$.

\subsection{The Gelfand-Tsetlin base}

Consider a chain of subalgebra $\mathfrak{gl}_n\supset\mathfrak{gl}_{n-1}\supset...\supset \mathfrak{gl}_1$.  Let us be given an irreducible representation  $V_{\mu_n }$ of the algebra $\mathfrak{gl}_n$ with the highest weight $\mu_n$.  When one restricts the algebra  $\mathfrak{gl}_n\downarrow \mathfrak{gl}_{n-1}$  the representation  ceases to be irreducible and it splits into a direct sum of irreducible representations of  $\mathfrak{gl}_{n-1}$. Every irreducible representation of  $\mathfrak{gl}_{n-1}$ can occur in this direct sum with multiplicity  not greater than one \cite{zh}.  Thus one has

 $$
 V_{\mu_n}=\sum_{\mu_{n-1}}  V_{\mu_n;\mu_{n-1}},
 $$

where   $\mu_{n-1}$ are possible  $\mathfrak{gl}_{n-1}$-highest weight and   $ V_{\mu_n;\mu_{n-1}}$ is a representation of  $\mathfrak{gl}_{n-1}$ with the highest weight   $\mu_{n-1}$.   The sum is taken over all  $\mathfrak{gl}_{n-1}$-highest weights occurring in the decomposition of $V_{\mu_n }$  into irreducible representations.

When one continuous restrictions $\mathfrak{gl}_{n-1}\downarrow \mathfrak{gl}_{n-2}$  and so on one gets a splitting of the following type 

\begin{equation}
\label{rmm}
V=\sum_{\mu_{n-1}} \sum_{\mu_{n-2}}...  \sum_{\mu_1}V_{\mu_n;...;\mu_1}.
\end{equation}

Here  $V_{\mu_n;...;\mu_1}$ is an irreducible representation of the algebra   $\mathfrak{gl}_1$, thus  $dimV_{\mu_n;...;\mu_1}=1$.   When one chooses  base vectors in all $V_{\mu_n;...;\mu_1}$ one obtains a base in   $V_{\mu_n}$. This is the Gelfand-Tsetlin base.

The base vectors are encoded by sets of highest weights $(\mu_n;...;\mu_1)$, appearing in splitting \eqref{rmm}.  If one writes these highest weights one under another in the following way 

$$
\begin{pmatrix}
m_{1,n}&& m_{2,n}&&...&& m_{n,n}\\
& m_{1,n-1}&& m_{2,n-1}&...& m_{n-1,n-1}\\
&&&&...\\
&&&&m_{1,1}
\end{pmatrix},
$$

then one obtains a diagram that is called   {\it the Gelfand-Tsetlin diagram}. We denote it as   $(m_{i,j})$.  For elements of this diagram the betweenness condition holds: if a element of a row  occurs between two elements of an upper row  then it lies between them in the numeric sense.  The contra verse is also true: every diagram from which the betweenness condition holds appears as a Gelfand-Tsetlin  diagram in the splitting \eqref{rmm}.

\subsection{  $A$-hypergeometric functions}

\subsubsection{A $\Gamma$-series}  One can find information about   a $\Gamma$-series in  \cite{GG}.

Let $B\subset \mathbb{Z}^N$  be a lattice, let  $\gamma\in \mathbb{Z}^N$ be a fixed vector. Define a  {\it  hypergeometric
$\Gamma$-series  } in variables $z_1,...,z_N$ by the formula

\begin{equation}
\mathcal{F}_{\gamma}(z)=\sum_{b\in
B}\frac{z^{b+\gamma}}{\Gamma(b+\gamma+1)},
\end{equation}
where  $z=(z_1,...,z_N)$. In the numerator and in the denominator the multi-index notations are used 

$$
z^{b+\gamma}:=\prod_{i=1}^N
z_i^{b_i+\gamma_i},\,\,\,\Gamma(b+\gamma+1):=\prod_{i=1}^N\Gamma(b_i+\gamma_i+1).
$$

For the  $\Gamma$-series considered in the present paper the vectors of exponents   $b+\gamma$ have integer coordinated. In this case instead of   $\Gamma$-functions it is reasonable to use shorter notations with factorials.  Hence in the denominator instead of  $\Gamma(b+\gamma+1)$ we shall write

$$
(b+\gamma)!:=\prod_{i=1}^N(b_i+\gamma_i)!
$$

We shall use an agreement that a factorial of a negative integer equals to infinity.

We need the following properties of  $\Gamma$-series:

\begin{enumerate}
\item A vector  $\gamma$ can be changed to $\gamma+b$, $b\in B$,  the series remains unchanged. 

\item $\frac{\partial }{\partial
z_i}\mathcal{F}_{\gamma,B}(z)=\mathcal{F}_{\gamma-e_i,B}(z)$, где $e_i=(0,...,1_{\text{
на месте }i},...,0)$.

\item
Let  $F_{2,1}(a_1,a_2,b_1;z)=\sum_{n\in\mathbb{Z}^{\geq
0}}\frac{(a_1)_n(a_2)_n}{(b_1)_n}z^n$, where 
$(a)_n=\frac{\Gamma(a+n)}{\Gamma(a)}$ is the Gauss hypergeometric series. Then for
 $\gamma=(-a_1,-a_2,b_1-1,0)$  and
$B=\mathbb{Z}<(-1,-1,1,1)>$ one has

\begin{align*}
&\mathcal{F}_{\gamma}(z_1,z_2,z_3,z_4)=cz_1^{-a_1}z_2^{-a_2}z_3^{b_1-1}F_{2,1}(a_1,a_2,b_1;\frac{z_3z_4}{z_1z_2})\\
&c=\frac{1}{\Gamma(1-a_1)\Gamma(1-a_2)\Gamma(b_1)}.
\end{align*}

A sum of a   $\Gamma$-series  (when it converges) is called an  $A$-hypergeometric function.

\subsubsection{ The Gelfand-Kapranov-Zelevinsky system}

An $A$-hypergeometric function satisfies a system of partial differential equation which consists of  equations of two types.

{\bf 1.} Let  $a=(a_1,...,a_N)$ be a vector orthogonal to the lattice $B$,  then

\begin{equation}
\label{e1}
a_1z_1\frac{\partial}{\partial z_1}\mathcal{F}_{\gamma}+...+a_Nz_N\frac{\partial}{\partial z_N}\mathcal{F}_{\gamma}=(a_1\gamma_1+...+a_N\gamma_N)\mathcal{F}_{\gamma},
\end{equation}
it is enough to consider only the base vectors of an orthogonal complement to $B$.

{\bf 2.} Let  $b\in B$ and $b=b_+-b_-$, where all coordinates of  $b_+$, $b_-$ are non-negative.  Let us emphasize non-zero elements in these vectors
$b_+=(...b_{i_1},....,b_{i_k}...)$,  $b_-=(...b_{j_1},....,b_{j_l}...)$. Then

\begin{equation}
\label{e2} (\frac{\partial }{\partial
	z_{i_1}})^{b_{i_1}}...(\frac{\partial}{\partial z_{i_k}})^{b_{i_k}}
\mathcal{F}_{\gamma}=(\frac{\partial }{\partial
	z_{j_1}})^{b_{j_1}}...(\frac{\partial }{\partial z_{j_l}})^{b_{j_l}} \mathcal{F}_{\gamma}.
\end{equation}

It is enough to consider only the base vectors $b\in B$.
\end{enumerate}

%Например, при $N=4$, $\gamma=(\gamma_1,\gamma_2,\gamma_3,\gamma_4)$,  а
%$B=\mathbb{Z}<(-1,-1,1,1)>$

%\begin{align}
%\begin{split}
%\label{gkzs} &(\frac{\partial^2}{\partial z_1\partial
%	z_4}-\frac{\partial^2}{\partial z_2\partial z_3})F_{\gamma}=0,\\
%& z_1\frac{\partial}{\partial z_1}F_{\gamma}+ z_2\frac{\partial}{\partial
%	z_2}F_{\gamma}=(\gamma_1+\gamma_2)F_{\gamma},\,\,\,z_1\frac{\partial}{\partial z_1}F_{\gamma}+
%z_3\frac{\partial}{\partial z_3}F_{\gamma}=(\gamma_1+\gamma_3)F_{\gamma},\\
%& z_1\frac{\partial}{\partial z_1}F_{\gamma}-z_4\frac{\partial}{\partial
%	z_4}F_{\gamma}=(\gamma_1-\gamma_4)F_{\gamma}.
%\end{split}
%\end{align}

\subsection{The case of $\mathfrak{gl}_3$}
\label{gl3tm}

The results presented in this Section were obtained in    \cite{1963},  from a modern point of view they are reformulated in \cite{a1}.

A Gelfand-Tsetlin diagram  in the case  $\mathfrak{gl}_3$ can be written as follows

\begin{equation}
	\label{dgc3}
\begin{pmatrix}
m_{1} && m_{2} &&0\\ &k_{1}&& k_{2}\\&&h_{1}
\end{pmatrix},
\end{equation}

Let us give a formula for the function corresponding to the  Gelfand-Tsetlin diagram  \eqref{dgc3}.  % $\mathfrak{gl}_3$.

\begin{theorem}\label{vec3} % Положим $B=\mathbb{Z}<(1,-1,-1,1)>$,
	
  Put determinants \eqref{dete} in the following order  $$a=(a_{1},a_{2},a_{3},a_{1,2},a_{1,3},a_{2,3}),$$
	
	take  a lattice
	
	$$
B=\mathbb{Z}<	(1,-1,0,0,-1,1)>.
	$$

	 $\gamma=(h_{1}-m_{2},k_{1}-h_{1},    m_{1}-k_{1}        ,        k_{2}    ,m_{2}-k_{2},0)$.  Then to the diagram there corresponds a functions   $\mathcal{F}_{\gamma}(a)$.
%$$
%	\frac{a_{1}^{m_{-2}-k_{-2}}}{(m_{-2}-k_{-2})!}\frac{a_{-2,-1}^{k_{-1}}}{k_{-1}!}F_{\gamma}(a_{-2},a_{-1},a_{-2,1},a_{-1,1})
%$$
\end{theorem}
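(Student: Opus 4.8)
The plan is to show that the finite sum $\mathcal{F}_\gamma(a)$ is proportional to the Gelfand-Tsetlin vector of the diagram \eqref{dgc3}, by reducing it to the classical Biedenharn-Baird form and by checking the three data that single out such a vector: the $\mathfrak{gl}_3$-weight, membership in the correct irreducible component, and the $\mathfrak{gl}_2$-isotypic type. First I would put the series in closed form. The lattice $B$ is one-dimensional and its generator $(1,-1,0,0,-1,1)$ is supported on the positions of $a_1,a_2,a_{1,3},a_{2,3}$, so $a_3$ and $a_{1,2}$ are spectators. Setting $z_1=a_2$, $z_2=a_{1,3}$, $z_3=a_1$, $z_4=a_{2,3}$ turns the generator into $(-1,-1,1,1)$, which is exactly the situation of the reduction formula (property (3) of the $\Gamma$-series); reading the parameters off from $\gamma$ gives
\[
\mathcal{F}_\gamma(a)=c\,a_3^{\,m_1-k_1}a_{1,2}^{\,k_2}\,a_1^{\,h_1-m_2}a_2^{\,k_1-h_1}a_{1,3}^{\,m_2-k_2}\,F_{2,1}\!\Bigl(h_1-k_1,\ k_2-m_2,\ h_1-m_2+1;\ \tfrac{a_1a_{2,3}}{a_2a_{1,3}}\Bigr).
\]
This is precisely the form of the function attached to \eqref{dgc3} in \cite{1963}, \cite{a1}, which already proves the theorem; the steps below give an independent verification and fix the normalization. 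The betweenness inequalities $m_1\ge k_1\ge m_2\ge k_2$ and $k_1\ge h_1\ge k_2$ guarantee, through the convention that a factorial of a negative integer is infinite, that the summation index runs over the finite range $\max(0,m_2-h_1)\le t\le\min(k_1-h_1,m_2-k_2)$, so $\mathcal{F}_\gamma$ is a genuine polynomial.

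Next I would check weight and degree. Under the right torus action each minor $a_{i_1,\dots,i_k}$ is an eigenvector of weight $\varepsilon_{i_1}+\dots+\varepsilon_{i_k}$, where $\varepsilon_j$ is dual to $E_{j,j}$; hence the lattice direction is weight-zero and the whole series carries a single $\mathfrak{gl}_3$-weight, which is computed from $\gamma$ to be $(h_1,\,k_1+k_2-h_1,\,m_1+m_2-k_1-k_2)$ --- exactly the weight prescribed by the row sums of \eqref{dgc3}. The same bookkeeping shows every monomial has degree $m_1-m_2$ in the one-column minors $a_1,a_2,a_3$ and degree $m_2$ in the two-column minors, i.e. the bidegree of the highest vector $v_0$ of \eqref{stv}; therefore $\mathcal{F}_\gamma$ lies in the canonical copy of the irreducible representation with highest weight $[m_1,m_2,0]$.

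It remains to pin down the $\mathfrak{gl}_2$-content. Specialising $h_1=k_1$ makes the parameter $h_1-k_1$ vanish, so $F_{2,1}\equiv 1$ and $\mathcal{F}_\gamma$ collapses to the monomial $c\,a_1^{k_1-m_2}a_3^{m_1-k_1}a_{1,2}^{k_2}a_{1,3}^{m_2-k_2}$; by \eqref{edet1} the operator $E_{1,2}$ annihilates it termwise (only $a_{1,2}$ carries an index $2$, and replacing it by $1$ repeats an index), so this is a $\mathfrak{gl}_2$-highest vector of highest weight $(k_1,k_2)$. For general $h_1$ the vector is reached by applying the $\mathfrak{gl}_2$-lowering operator $E_{2,1}$ exactly $k_1-h_1$ times: combining property (2) (that $\partial_{z_i}$ shifts $\gamma$ by $-e_i$) with the action of $E_{2,1}$ on minors, one shows that $E_{2,1}$ carries the series for $h_1$ to a multiple of the series for $h_1-1$, the normalising factorials being arranged precisely so that this holds; this is the contiguous relation for $F_{2,1}$ in the parameter $h_1$. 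Thus $\mathcal{F}_\gamma$ spans the weight-$h_1$ line of the $\mathfrak{gl}_2$-irreducible component of highest weight $(k_1,k_2)$, and since a Gelfand-Tsetlin vector is determined up to scale by its $\mathfrak{gl}_3$-weight, its $\mathfrak{gl}_2$-type $(k_1,k_2)$ and its $\mathfrak{gl}_1$-weight $h_1$, this identifies $\mathcal{F}_\gamma$ with the vector of the diagram \eqref{dgc3}.

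The main obstacle is this last step, controlling the $\mathfrak{gl}_2$-isotypic type: the weight and degree computations are immediate, but verifying that $\mathcal{F}_\gamma$ lands in the component of highest weight exactly $(k_1,k_2)$ --- and not in a larger one --- requires either the careful normalisation check that $E_{2,1}$ intertwines the series for consecutive values of $h_1$, or the equivalent appeal to the contiguous relations of $F_{2,1}$ as in \cite{1963}. Carrying the constant $c$ and the factorial normalisations correctly through this descent is the one genuinely delicate point of the argument.
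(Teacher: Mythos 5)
Your argument is correct, but it is worth saying that the paper itself does not prove Theorem \ref{vec3}: it treats the statement as known, citing \cite{1963} (where the function is obtained by applying the lowering operators $\nabla_{3,1},\nabla_{3,2},\nabla_{2,1}$ to the highest vector and recognizing the result as a Gauss series) and \cite{a1}, and only supplies afterwards the heuristic reading of the system \eqref{ssy}. Your route is a direct verification rather than a derivation: you reduce $\mathcal{F}_\gamma$ to the Biedenharn--Baird closed form via property 3 of the $\Gamma$-series, and then check the three data that characterize a Gelfand--Tsetlin vector up to scale --- the $\mathfrak{gl}_3$-weight, the bidegree placing it in the canonical copy of $V_{[m_1,m_2,0]}$ (which is exactly the criterion the paper later invokes in Proposition \ref{pzh} via \cite{zh}), and the $\mathfrak{gl}_2$-isotypic type, pinned down by collapsing the series at $h_1=k_1$ to a $\mathfrak{gl}_2$-highest monomial and descending with $E_{2,1}$. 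The one step you flag as delicate does work out cleanly in the $\Gamma$-series language: writing $E_{2,1}=a_2\partial_{a_1}+a_{2,3}\partial_{a_{1,3}}$ and using property 2, the two resulting shifted series recombine (after reindexing $t\mapsto t-1$ in the second) into $(k_1-h_1+1)\,\mathcal{F}_{\gamma-e_{1}+e_{2}}$, i.e.\ a nonzero multiple of the series for $h_1-1$, so the descent from $h_1=k_1$ never degenerates. What your approach buys is a self-contained proof that uses only the $\Gamma$-series formalism already set up in the paper; what the paper's citation buys is the explicit normalization of the Gelfand--Tsetlin vector, which your argument determines only up to the scalar $c$ and the product of contiguity constants.
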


%Более явно возникающая функция  $\mathcal{F}_{\gamma}(a)$  записана в формуле \eqref{ac} ниже.

This  $\Gamma$-series can be expressed through the Gauss hypergeometric function  (see property 3 for the   $\Gamma$-series).   In this form the Theorem was obtained in \cite{1963}.  One can easily see that the considered  $\Gamma$-series is a finite sum.

Note that the lattice  $B$  can be defined by equations with the following conditions for sums of exponents of determinant that contain some certain indices :

\begin{equation}
\label{ssy}
\begin{cases}
\text{  The sum of exponents of determinants that contain $1$, or $2$,  or $2$}=m_{1},\\
\text{ The sum of exponents of determinants that contain $1$ and $2$, $1$ and $3$, $2$ and $3$}=m_{2},\\
\text{ The sum of exponents of determinants that contain $1$  or $2$ }=k_{1},\\
\text{ The sum of exponents of determinants that contain $1$ and $2$ }=k_{2},\\
\text{ The sum of exponents of determinants that contain $1$}=h_{1}.
\end{cases}
\end{equation}

These equations have a natural explanation form the point of view of the representation theory.  Take a function of determinants that corresponds to a Gelfand-Tsetlin diagram, decompose it into a sum of products of determinants.

The last equation guarantees that this function is a weight vector for the operator $E_{1,1}$ with the weight  $m_{1,1}$.

The third and the fourth equations guarantee that after an application of the raising operator  from  $\mathfrak{gl}_2$   (in the needed power)  one  obtains a highest vector for the algebra $\mathfrak{gl}_2$ with the highest weight  $[k_1,k_2]$.

 The first and the second conditions guarantee that after an application of the raising operator  from   $\mathfrak{gl}_3$   (in the needed power)  one  obtains a highest vector for the algebra   $\mathfrak{gl}_3$ with the highest weight  $[m_1,m_2,0]$.

Because of the presence of relations between determinants these conditions  do not necessarily hold for a function corresponding summands of the functions corresponding to a  Gelfand-Tsetlin diagram. But Theorem  \ref{vec3} states that these elementary considerations do give a right answer in the case   $\mathfrak{gl}_3$.

\section{The Gelfand-Tsetlin lattice}

Now we proceed to  finding of an analogue of the Theorem \ref{vec3} in the case  $n>3$.
Let first write an analogue of the equations  \eqref{ssy}  in the case  $n>3$ and let us investigate the obtained lattice.

%Ниже мы рассматриваем гипергеометрические функции, связанные с определенной решеткой, которую мы будет называть решеткой {\it Гельфанда-Цетлина}.

Consider a complex linear space whose coordinates are indexed by proper subsets  $X\subset\{1,...,n\}$.
Denote the coordinates as  $z_X$, $X=\{i_1,...,i_l\}$. Denote a corresponding unit vector as   $e_{X}$.

\begin{definition} Let   $B_{}$ be a lattice defined by the following equations.  Fix  $1\leq p\leq q\leq n$.
		A vector
	$v\in  B_{}$ if and only if the sum of it's coordinates, that contain at least    $p$ indices such that they are   $\leq q$, equals to $0$:
	
	$$
v\in B_{} \Leftrightarrow 	\sum_{X:  \text{ X contains}\geq  p \text{ indices such that }  \leq q} v_X=0.
	$$
\end{definition}

{\it Everywhere below  $B$ denotes this lattice.}

Take in   $\mathbb{Z}^N$ a standard scalar product:

$$
<x,y>:=\sum_{i=1}^N x_iy_i,\,\,\, x,y\in \mathbb{Z}^N
$$

Then the definition of    $B$ says that an orthogonal complement to  $B_{}$  is spanned by vectors $\chi_p^q$, defined as follows

\begin{equation}
\label{xii1}
(\chi_p^q)_X=\begin{cases} 1 ,\text{if  X contains }\geq  p \text{ indices such that }  \leq q  \\ 0  \text{ иначе} \end{cases}
\end{equation}

%\begin{defn} Опрелелим $\chi_p^q$, которые определяются так.  у вектора $\chi_p^q$ координаты равны  $0$  или  $1$, причем  $1$  стоит на координтах, таких что в их индексах не менее  $p$ чисел, каждое из которых  $\leq q$.
%	\end{defn}

Let us find a base in $B_{}$.
% Сперва заметим, что   вектора

%\begin{align}
%\begin{split}
%\label{a12n}
%&(0,...,1_{z_n},...,0)=\chi_1^n-\chi_1^{n-1},\\
%&(0,...,1_{z_{1,2,...,n-1}},...,0)=\chi_{n-1}^{n-1}-\chi_n^n,\\
%&(0,...,1_{z_{1,2,...,n}},...,0)=\chi_n^n
%\end{split}
%\end{align}

%ортогональны $B_{}$.

\begin{definition}
	Let $X\subset \{1,...,n\}$. One says that $$s<X,$$ if every element of   $X$ is greater than  $s$. The equality $s<\emptyset$ holds.
\end{definition}

%  Выберем индексы  $i,j,s$ и подмножество  $I\subset \{1,...,n\}$, такие что $i<j<s<I$, сопоставим им вектор

%Чтобы доказательство было более наглядным нарисуем часть диаграммы Гельфанда-Цетлина, соеденим некоторые ее элементы рёбрами.  Пусть $i<k$, $X\subset\{k+2,...,n\}$ пометим это рёбра некоторыми переменными:

%\begin{center}
%	\begin{tikzpicture}
%	\draw (2,2) node (a)  {$m_{i+1,k+1}$}; \draw (0,0) node (b)  {$m_{i,k}$}; \draw (4,0) node (c)  {$m_{i+1,k}$}; \draw[<->] (a) -- (b)
%	node[midway,above] {$A_{1,...,i,X}$}; \draw[<->] (a) -- (c)
%	node[midway,above] {$A_{1,...,i,k+1,X}$};
%	\end{tikzpicture}
%\end{center}

Consider
%в связи с этой картинкой
vectors
%двух типов. Во-первых вектора вида

%\begin{equation}
%\label{ijs1}
%v_{(i,j,k+1,X)}=(...,1_{A_{1,...,i,X}},...,-1_{A_{1,...,k,X}},...,-1_{A_{1,...,i,k+1,X}},...,1_{A_{1,...,k,k+1,X}},...)
%\end{equation}

%А во вторых вектора вида

\begin{equation}
\label{ijs2}
v_{i,j,x,X}=(...,1_{z_{1,...,i-1,i,X}},...,-1_{z_{1,...,i-1,j,X}},...,-1_{z_{1,...,i-1,i,xX}},...,1_{z_{1,...,i-1,j,xX}},...).
\end{equation}

A base in the lattice  $B$ is constructed as a  subset $\mathcal{I}$  in the set of all vectors   \eqref{ijs2}.   More precise   $\mathcal{I}$  is a subset of vectors of type  \eqref{ijs2},  such that the following conditions hold.  Firstly for all chosen vectors   $v_{i,j,x,X}$ one has  $i<j<x<X$.  
Secondly the following condition holds. For fixed  $i,j$ consider a graph whose vertices correspond to subsets  $X\subset\{j+2,...,n\}$, and  edges correspond to pairs $X,xX $ (not necessary  $x<X$), where we use notation

$$xX:=\{x\}\cup X.$$

 Thus  the edges correspond to vectors  \eqref{ijs2}.   Given a  subset    $\mathcal{I}$  of vectors of type    \eqref{ijs2} there corresponds a graph consisting of vertices defined by vectors form the subset and edges whose ends are these vertices.
The second condition is the following. We claim that  {\bf a) }  the subset  $\mathcal{I}$ is such that for all   $i,j$  the corresponding graph is a tree,  {\bf b) }  the subset    $\mathcal{I}$  is maximal with respect to extensions preserving the property  {\bf a)}. 

\begin{propos}
	
The chose vectors    \eqref{ijs2} form a base in  $B_{}$
\end{propos}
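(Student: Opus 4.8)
The plan is to establish three facts whose conjunction is the statement: every chosen vector lies in $B$, the chosen vectors are $\mathbb{Z}$-linearly independent, and their number equals
$\dim B = 2^{n}-1-\frac{n(n+1)}{2}$.
Since all coordinates are indexed by the $2^{n}-2$ nonempty proper subsets and $B^{\perp}=\operatorname{span}\{\chi^{q}_{p}\}$ by the orthogonality description above, the first two facts place a full-rank independent family inside $B$, and the dimension match forces it to be a base.

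First I would check membership $v_{i,j,x,X}\in B$ by pairing with each $\chi^{q}_{p}$. Writing $S=\{1,\dots,i-1\}$, the four sets carrying the nonzero entries are $S\cup\{i\}\cup X,\ S\cup\{j\}\cup X,\ S\cup\{i\}\cup xX,\ S\cup\{j\}\cup xX$ with signs $+,-,-,+$. Fixing $q$, let $\alpha$ be the number of indices $\le q$ in $S\cup X$ and set $\beta_{i}=[i\le q],\ \beta_{j}=[j\le q],\ \beta_{x}=[x\le q]$ (brackets denote the indicator). Then $\langle\chi^{q}_{p},v_{i,j,x,X}\rangle=[\alpha+\beta_{i}\ge p]-[\alpha+\beta_{j}\ge p]-[\alpha+\beta_{i}+\beta_{x}\ge p]+[\alpha+\beta_{j}+\beta_{x}\ge p]$. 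This vanishes when $\beta_{x}=0$; and when $\beta_{x}=1$ one has $x\le q$, so $i<j<x\le q$ forces $\beta_{i}=\beta_{j}=1$ and the two surviving differences cancel. Thus the ordering $i<j<x$ is exactly what makes each vector orthogonal to $B^{\perp}$.

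The linear independence is the main obstacle, because an ``upper'' coordinate $S\cup\{i\}\cup X$ of one vector can coincide with a coordinate of a vector attached to a different pair $(i',j')$. I would remove this by projecting onto the coordinates indexed by the sets $Q(i,j,Y)=\{1,\dots,i-1,j\}\cup Y$ with $Y\subseteq\{j+1,\dots,n\}$ (those containing $j$ but not $i$). Reading off the longest initial segment $\{1,\dots,i-1\}$ and then the smallest remaining element $j$ shows that such a set determines $(i,j,Y)$ uniquely, so these coordinates are partitioned among the pairs $(i,j)$. Under this projection $v_{i,j,x,X}\mapsto -e_{Q(i,j,X)}+e_{Q(i,j,xX)}$, the oriented incidence vector of the edge $\{X,xX\}$ of the graph attached to $(i,j)$. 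Rooting each chosen spanning tree and ordering its edges by decreasing depth, the square matrix of these incidence vectors against the non-root vertex coordinates is block diagonal over the pairs, each block being the incidence matrix of a rooted tree, hence unimodular. This simultaneously proves $\mathbb{Q}$-independence and, once the dimension is matched, upgrades the family to a $\mathbb{Z}$-basis: the projection onto these coordinates is an isomorphism of $B$ onto $\mathbb{Z}^{C}$ sending $\mathcal{I}$ to a unimodular basis.

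It remains to match cardinalities. For $\dim B$ I would show that the $\chi^{q}_{p}$ with $(p,q)\ne(n,n)$ are independent, $\chi^{n}_{n}$ being zero on proper subsets. Writing a dependence $\sum\lambda_{p,q}\chi^{q}_{p}=0$ and evaluating at a subset $X$ gives $\sum_{q}g_{q}(c_{q}(X))=0$, where $c_{q}(X)=|X\cap\{1,\dots,q\}|$ and $g_{q}(c)=\sum_{p\le c}\lambda_{p,q}$. Exchanging the membership of a consecutive pair $t,t+1$ alters only $c_{t}$ by one while keeping a proper subset, so $g_{t}$ is constant on its range; as $g_{t}(0)=0$ it vanishes, forcing every $\lambda_{p,q}=0$. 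Hence $\operatorname{rank}B^{\perp}=\frac{n(n+1)}{2}-1$ and $\dim B=2^{n}-1-\frac{n(n+1)}{2}$. Finally, for each pair $i<j$ the attached graph is connected with $2^{\,n-j}$ vertices, so its spanning tree contributes $2^{\,n-j}-1$ vectors, and summing over $2\le j\le n$ with $i<j$ gives $\sum_{j=2}^{n}(j-1)\bigl(2^{\,n-j}-1\bigr)=2^{n}-1-\frac{n(n+1)}{2}=\dim B$. Having produced exactly $\dim B$ independent vectors of $B$ forming a unimodular system, I conclude they are a base of $B$.
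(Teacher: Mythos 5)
Your overall skeleton (orthogonality to the $\chi_p^q$, linear independence, cardinality count) is the same as the paper's, but each ingredient is executed differently, and mostly to your advantage. Your orthogonality check via the identity $[\alpha+\beta_i\ge p]-[\alpha+\beta_j\ge p]-[\alpha+\beta_i+\beta_x\ge p]+[\alpha+\beta_j+\beta_x\ge p]=0$ is cleaner than the paper's case analysis on the position of $q$ relative to $i,j$. Your explicit proof that the functionals $\chi_p^q$ with $(p,q)\ne(n,n)$ are independent (the consecutive-swap argument) supplies a step the paper only asserts when it writes $\operatorname{rk}B=2^n-1-(1+\dots+n)$; note only that the swap of $t,t+1$ is unavailable for $t=n$, so the $q=n$ terms must be finished separately (once the $q<n$ terms are killed, $g_n(|X|)=0$ for all $1\le |X|\le n-1$ does it). You also address the distinction between a $\mathbb{Z}$-basis of $B$ and a basis of a finite-index sublattice via unimodularity, which the paper's proof (independence plus rank count) does not settle.

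The one genuine flaw is in the independence step. The claim that under the projection onto the $Q$-coordinates one has $v_{i,j,x,X}\mapsto -e_{Q(i,j,X)}+e_{Q(i,j,xX)}$, and hence that the matrix is block diagonal over the pairs $(i,j)$, is false: the two supporting sets that contain $i$, namely $\{1,\dots,i\}\cup X$ and $\{1,\dots,i\}\cup xX$, are themselves $Q$-coordinates (of the form $Q(i+1,j',Y')$ with $j'$ the least element of $X$, resp.\ of $xX$) whenever they are not initial segments, so $v_{i,j,x,X}$ has nonzero entries in blocks with first index $i+1$. The argument survives because all such spill-over lands in blocks with strictly larger first index: ordering the blocks by increasing $i$ makes the matrix block lower-triangular with the same tree-incidence diagonal blocks, which suffices both for independence and for the unimodularity you invoke. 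With that correction your proof is complete. For contrast, the paper handles independence by a combinatorial lemma --- a family of differences $v_{\alpha_p}-v_{\alpha_q}$ of independent vectors is dependent iff it contains a cyclic subset --- applied twice to reduce everything to the absence of cycles in the chosen graphs; your incidence-matrix argument is essentially the matrix form of the same tree fact, but it additionally yields the integral statement.
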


Note that the obtained formulas for the base vectors are very close to the formulas appearing in the construction of hypergeometric functions associated with grassmanians in Plucker coordinates  \cite{GG}.

\begin{proof} {\bf 1.}  Let us prove that the vectors from $\mathcal{I}$   %\eqref{ijs2} 
	 are linearly independent.

Let us begin with  {\it  a general remark}.  Let us be given a set of linearly independent vectors   $\{v_{\alpha}\}$,   then the set consisting of  differences  $\{v_{\alpha_p,\alpha_q}=v_{\alpha_p}-v_{\alpha_q}\}$ is linearly independent if and only if  it does not contain 
{\it cyclic } subsets $v_{\alpha_{p_1},\alpha_{p_2}},v_{\alpha_{p_2},\alpha_{p_3}},...,v_{\alpha_{p_n},\alpha_{p_1}}$.  The corresponding vanishing non-trivial linear combination is just a sum of these vectors.

Now consider the vectors of type

\begin{equation}
\label{vijx}
v_{i,j;X}=(...,1_{z_{i}},...,-1_{z_{j}},...,-1_{z_{iX}},...,1_{z_{jX}},...).
\end{equation}

Let us  find when a set of vectors of such type is linearly independent.  Let us apply a projection of these vectors onto  the subspace spanned by coordinates of type  $z_{i}$. Consider  the obtained vectors
$$
v_{i,j}=(...,1_{z_{i}},...,-1_{z_{j}},...).
$$

Let us apply {\it the general remark } to the subset of vectors $\{v_{\alpha}\}=\{e_{z_{i}}\}$. One obtains  that the set of vectors  $v_{i,j}$  is linearly independent if and only if it contains a subset of type 

$$
v_{i_1,i_2},v_{i_2,i_3},...,v_{i_n,i_1}.
$$

The corresponding vanishing  non-trivial linear combination is proportional to the sum of these vectors.
The sum of projections of the vectors  $
(...,1_{z_{i}},...,-1_{z_{j}},...,-1_{z_{iX}},...,1_{z_{jX}},...)
$  onto other coordinates can  vanish only in the case when all these vectors have the same set   $X$.
If there are no such subsets then the set of vectors  \eqref{vijx} is linearly independent. In particular  in the set  vectors of type  $\{v_{i,j;\{1,...,i-1,X\}}\}$   there are no such subsets. Thus it is linearly independent.

%Значит, вектора \eqref{vijx} линейно независим.

Now let us apply  {\it the general remark} to the set $\{v_{\alpha}\} =\{v_{i,j;\{1,...,i-1,X\}}\}$.  One obtains that this set is linearly independent.

The vectors of type  \eqref{ijs2}
%\begin{equation}
%\label{vijx1}
%v_{i,j,x,X}=(...,1_{A_{1...i-1,i,X}},...,-1_{A_{1,...,i-1,j,X}},...,-1_{A_{1,...,i-1,i,xX}},...,1_{A_{1,...,i-1,j,xX}},...).
%\end{equation}
are vectors of type $v_{\alpha_i,\alpha_j}$ for the following set $\{v_{\alpha}\}$: \begin{equation}\label{razn}v_{i,j,x,X}=v_{i,j;\{1,...,i-1,X\}}-v_{i,j;\{1,...,i-1,xX\}}.\end{equation}

%Рассмотрим набор, состоящий из векторов $\{v_{i,j,x,X}\}$

%$$
%v_{i_1,j_1,x^1,X^1},...,v_{i_n,j_n,x^n,X^n},
%$$

%представим эти вектора как разности:

%\begin{equation}
%\label{nabor}
%v_{i_1,j_1,X_1}-v_{i_1,j_1,x_1,X_1},...,v_{i_n,j_n,X_n}-v_{i_n,j_n,x_n,X_n}
%\end{equation}

Thus the set of vectors $\{v_{i,j,x,X}\}$ is linearly dependent if and only if it contains a  {\it cyclic subset}.  Let us be given a cyclic subset consisting of  $q$ vectors.  Then after reordering one has  

$$
i_1=i_2=...=i_q,\,\,\, j_1=j_2=...=j_q.
$$
That is the cyclic subset consists of vectors  $\{v_{i,j,x_l,X_l}\}$, $l=1,...,q$. 
Also one gets that the graph whose vertices correspond to subsets   $X_l$, $x_lX_l$ and    edges correspond to edges  $\overline{X_l, x_lX_l}$ topologically is a circle.

For the vectors   \eqref{ijs2}  such situations are prohibited. Thus these vectors are linearly independent.

{\bf 2.} The vectors $\mathcal{I}$ % \eqref{ijs2}
 are orthogonal to all the vectors $\chi_p^q$.

%Вектор типа  \eqref{ijs1} имеет ненулевые координаты лишь с номерами   $A_{1,...,i,X}$, $A_{1,...,k,X}$, $A_{1,...,i,k+1,X}$, $A_{1,...,k,k+1,X}$, причём они равны  $(1,-1,-1,1)$.
The vectors of type  \eqref{ijs2}  have non-zero coordinates only with indices   $z_{1,...,i-1,i,X}$, $z_{1,..,i-1.,j,X}$, $z_{1,...,i-1,i,xX}$, $z_{1,...,i-1,j,xX}$,  these coordinates are $(1,-1,-1,1)$.   Let us write these coordinates for the vectors  $\chi_p^q$  and let us check that they are orthogonal to   $(1,-1,-1,1)$.

Let   $q\geq j$.
%  Рассмотрим сначала случай вектора  \eqref{ijs2}.
 Then the coordinates  $z_{1,...,i-1,i,X},z_{1,...,i-1,j,X}, z_{1,...,i-1,i,x,X},z_{1,...,i-1,j,x,X}$ of the vector $\chi_p^q$ in the case $p\leq i$ are equal to  $(1,1,1,1)$, thus this vector is orthogonal to  $(1,-1,-1,1)$.  And in the case $p>i$  the considered coordinates of the vector $\chi_p^q$  coincide with the numbers  $(\chi_{p-i}^q)_{z_{X}}$, $(\chi_{p-i}^q)_{z_{X}}$, $(\chi_{p-i}^q)_{z_{x,X}}$, $(\chi_{p-i}^q)_{z_{x,X}}$.  This numbers form a vector which is orthogonal to   $(1,-1,-1,1)$.
 %Аналогично разбирается случай \eqref{ijs2}.

Let   $i\leq q<  j$.
  %Рассмотрим сначала случай вектора  \eqref{ijs2}.
    Then the coordinates  $z_{1,...,i-1,i,X},z_{1,...,i-1,j,X}, z_{1,...,i-1,i,x,X},z_{1,...,i-1,j,x,X}$ of the vector  $\chi_p^q$ in the case $p\leq i-1$ are equal to  $(1,1,1,1)$, and in the case $p> i-1$  these coordinates of the vector  $\chi_p^q$  coincide with the numbers $(\chi_{p-i+1}^q)_{z_{i}}$, $(\chi_{p-i+1}^q)_{z_{i+1}}$, $(\chi_{p-i+1}^q)_{z_{i,x}}$, $(\chi_{p-i+1}^q)_{z_{i+1,x}}$. In both cases they are orthogonal to the vector   $(1,-1,-1,1)$.
% Аналогично разбирается случай \eqref{ijs2}.

Finally in the case  $q\leq i-1$ the coordinates   $z_{1,...,i-1,i,X},z_{1,...,i-1,j,X}, z_{1,...,i-1,i,x,X},z_{1,...,i-1,j,x,X}$ of the vector $\chi_p^q$ are equal to  $(1,1,1,1)$.  This numbers form a vector which is orthogonal to   $(1,-1,-1,1)$.

{\bf 3.}    The number of vectors in $\mathcal{I}$ 
% \eqref{ijs2}
 equals to the rank of the lattice  $B_{}$.  Indeed,  $B_{}$  is embedded into the space of dimension  $2^n-1$.  The lattice is defined by $n+(n-1)+...+1$ equations. Thus the rank of   $B_{}$ equals $2^n-1-(1+...+n)$.

The number of vectors  \eqref{ijs2} can be found in the following manner.  Note that the number of edges in a tree equals to the number of  vertices minus $1$.  Thus the number of vectors  \eqref{ijs2}  with fixed   $i$ equals to the number of choices of  $j$, $x$ and  $X$.  Thus the number of  vectors\eqref{ijs2}
 equals to the sum by $i$ from  $1$  to $n-2$ of the number of subsets in  $\{i+1,...,n\}$, that consist of at least two elements:

\begin{align*}
&\sum_{i=1}^{n-2}(2^{n-i}-1-(n-i))=\sum_{t=2}^{n-1}(2^t-1-t)=2^n-1-(1+...+n).
\end{align*}
 The Proposition is proved.

\end{proof}

%\begin{remark}
%Отметим, что вектор

%\begin{equation}
%v=(...,1_{z_{i,I}},...,-1_{z_{j,I}},...,-1_{z_{i,s,I}},...,1_{z_{j,s,I}},...),
%\end{equation}
%где $i<j<s$, но $I$ - произвольно есть линейная комбинация
%векторов  \eqref{ijs2}. Действительно, если $I=\{l\}\cup I'$, причем
%$i<j<l<s<I'$, то

%$$
%v=v_{i,j,l,s,I'}+v_{i,j,s,I'}-v_{i,j,l,I'}.
%$$

%Продолжая эти рассуждения можно прийти к выводу
%\end{remark}

%Далее будем использовать следующее  более компактное обозначение.

Below in the paper we use an index   $\alpha$  and a number $k$,  let us give their definitions.

\begin{definition}
	\label{ia}
 Chose in the lattice  $B_{}\subset \mathbb{Z}^N$ a base $\mathcal{I}$. Then  $k$  is the number of vectors in the base and   $\alpha$  is an index running through the set $\{1,...,k\}$.
	% состоящий из векторов $v^1,...,v^k$.
%Будем обозначать индекс, пробегающий множество $\{1,...,k\}$, как $\alpha$.
\end{definition}

The choice of the base is suggested to be fixed in the remaining part of the paper.
%Этот выбор базиса считаем фиксированным на протяжении всей статьи.
%  Также подчеркнем, что всюду ниже   $k$ есть число векторов в этом фиксированном базисе.

%Fix vectors $\gamma\in\mathbb{Z}^N$ and  $s\in\mathbb{Z}^k_{\geq 0}$.

If a base vector is written as follows %Если базисный вектор имеет вид

%$$v_{i,j,x,X}=(...,1_{z_{iX}},...,-1_{z_{jX}},...,-1_{z_{ixX}},...,1_{z_{jxX}},...),$$
$$v_{i,j,x,X}=(...,1_{{iX}},...,-1_{{jX}},...,-1_{{ixX}},...,1_{{jxX}},...),$$

then introduce notations

%\begin{align}
%\begin{split}
%\label{vpm}
%&v^{\alpha}_+=e_{iX}^{\alpha}+e_{jyX}^{\alpha},\\
%&v^{\alpha}_-=e^{\alpha}_{jX}+e^{\alpha}_{iyX},\\
%&v^{\alpha}_0=e^{\alpha}_{yX}+e^{\alpha}_{ijX},\\
%\end{split}
%\end{align}

\begin{align}
\begin{split}
\label{vpm}
&v^{\alpha}_+=e_{iX}^{}+e_{jyX}^{},\\
&v^{\alpha}_-=e^{}_{jX}+e^{}_{iyX},\\
&v^{\alpha}_0=e^{}_{yX}+e^{}_{ijX},\\
\end{split}
\end{align}

Then

$$
v^{\alpha}=v^{\alpha}_+-v^{\alpha}_-.
$$

With each base vector $v^{\alpha}$   one associates the vector
\begin{equation}
\label{ra}
r^{\alpha}:=v^{\alpha}_0-v^{\alpha}_+.
\end{equation}

\section{Functions associated with the lattice }

%Для простоты будем далее решетку Гельфанда-Цетлина обозначать просто как  $B$.

In this section we introduce some functions of hypergeometric type associated with the Gelfand-Tsetlin lattice  $B$.

\subsection{  $A$-hypergeometric functions}

\label{agg}

\subsubsection{  A $\Gamma$-series associated with the lattice}

Consider a Gelfand-Tsetlin diagram   $(m_{i,j})$. To this diagram there corresponds a shifted lattice    $\gamma+B$,  defined by the following  equations.

\begin{definition}\label{sdr}
A vector
	$x\in  \mathbb{Z}^N$ belongs to   $\gamma+B$  if and only if for all $1\leq p\leq q\leq n$  the sum of coordinates indexed by such sets  $X$,  that contain at least  $p$  indices satisfying   $\leq q$,  equals to $ m_{p,q}$.
\end{definition}
The shift vector  $\gamma$  is not uniquely defined.  Below we introduce functions    $\mathcal{F}_{\gamma}$ that do not depend on the choice of  $\gamma$ and the functions $J_{\gamma}^s$, $F_{\gamma}$ that do depend on the choice of  $\gamma$.

To a shifted lattice there corresponds an  $A$-hypergeometric function written as a   $\Gamma$-series:
 \begin{equation}
 \label{gmr}
\mathcal{F}_{\gamma}(z)=\sum_{x\in \gamma+B}\frac{z^x}{x!}=\sum_{t\in\mathbb{Z}^k}\frac{z^{\gamma+tv}}{(\gamma+tv)!},
\end{equation}
where we use a fixed base    $\mathcal{I}=\{v^1,...,v^k\}$   in the lattice  $B$  and we use a notation  $tv:=t_1v^1+...+t_kv^k$.  The second expression for  $\mathcal{F}_{\gamma}(z)$ in  \eqref{gmr} is used below to define more general  functions  $J^s_{\gamma}(z)$ (see  \eqref{fs}).

Due to the properties of a $\Gamma$-series,  the expression \eqref{gmr}  does not  depend on the choice of    $\gamma$  $mod B$.

Note that the sum in the expression  \eqref{gmr}  is finite. Indeed the base vectors  $v^1,...,v^k$ have the following property:  they have both positive and negative coordinates.  Thus  only for finite number of  $t\in\mathbb{Z}^k$ a vector of exponents  $\gamma+tv$ has  only non-negative coordinates.  Since we divide by factorials of exponents in  \eqref{gmr}  only such summands are non-zero.

 Also note that in the case when a diagram does not satisfy the betweenness condition  then  \eqref{gmr} equals  $0$.

The constructed functions  $\mathcal{F}_{\gamma} $,  into which one substitutes  the determinants $a_X$,  are direct analogues in the case  $\mathfrak{gl}_n$, $n>4$  of the functions  that in the case    $\mathfrak{gl}_3$ correspond to the Gelfand-Tsetlin diagrams.  Thus it is natural to  ask the question whether the same is true in the case   $\mathfrak{gl}_n$, $n>4$.
Unfortunately this hypothesis is not true \footnote{To prove it one needs to investigate an action of  $E_{4,3}$ onto  $\mathcal{F}_{\gamma}$.  One finds that the selection rulers for this action differ from the  selection rulers for the Gelfand-Tsetlin base.}. Nevertheless  the construction of   $\mathcal{F}_{\gamma} $ is an important step to the construction of functions corresponding to the Gelfand-Tsetlin diagrams.

% \begin{remark}
 %Система ГКЗ, связанная с данной решеткой довольно  интересна с точки зрения аналитической теории дифференциальных уравнений.  Сингулярное множество произвольной системы ГКЗ довольно сложно (см  \cite{sts}),  в случае же системы ГКЗ, связанной со сдвинутой решеткой  $\gamma+B_{GC}$ сингулярное множество  находится совершенно явно. Дело в том,  что символы операторов для уравнений второго типа системы ГКЗ в точности те же, что символы для уравнения второго типа для системы  \eqref{gkzs}.  Отсюда можно сделать вывод, что сингулярное множество задается уравнениями

 %\begin{align*}
 %\begin{cases}
 %z_{Y}=0,\,\,\, Y\subset\{1,...,N\},\\
% z_{1,...,i-1,i,X}z_{1,...,i-1,j,y,X}-z_{1,...,i-1,j,X}z_{1,...,i-1,i,y,X}=0
 %\end{cases}
 %\end{align*}
 %\end{remark}

 \subsubsection{Why a   $\Gamma$-series?}

Let us give arguments (not only an analogy with the case  $n=3$), why it is natural to begin a search for a function corresponding to a Gelfand-Tsetlin diagram  with a construction of a  $\Gamma$-series.

Let us present this function as a sum of products of determinants. Which  product can occur in this sum?   Naive considerations at the end of Section    \ref{gl3tm}  show that it is natural to expect that the exponents of these products satisfy the system form the definition  \ref{sdr}. A $\Gamma$-series is the simplest function for which the exponents form a set defined by the system from the definition  \ref{sdr}.

Mention that there exists a construction of hypergeometric functions associated with homogeneous spaces   \cite{GKZ}.  Since a representation can be realized as a space of section of a bundle over a flag variety \cite{flag},  it is natural to ask a question:  is it useful to take the hypergeometric functions associated with flag varieties, do they coincide with the functions construed in the present paper?   It turns out  that the hypergeometric functions associated with flag varieties are not related with the functions  construed in the present paper.   The  hypergeometric functions associated with flag varieties  can be considered as functions of Plucker coordinates. The Plucker coordinates are coordinates in the space  $\mathbb{C}^N$, indexed by proper subsets  $X\subset \{1,...,n\}$. The  hypergeometric functions associated with flag varieties then are  related with  $A$-hypergeometric functions that are defined by a  lattice  $E$ в in the space  $\mathbb{C}^N$,  which  a lattice of characters  of a torus action on    $\mathbb{C}^N$,  which is obtained from a standard action of an  $n$-dimensional torus on  $\mathbb{C}^n$  \cite{GG}.  But the lattice  $B$  does not  coincide with  $E$, since the corresponding torus action on   $\mathbb{C}^N$ {\it can not  }  be obtained from a standard action of the  $n$-dimensional torus onto $\mathbb{C}^n$.   One can show that  $B \varsubsetneq E$.
 Form this inclusion it follows that the dimension of the space of hypergeometric functions associated with a flag variety is lower than the dimension of a representation.

 Note also the following curiosity. With the lattice    $B$ one can associate an action of a torus  (for which this lattice is a lattice of characters) of dimension $n+(n-1)+...+1$ (see  \cite{GKZ}).  There exists another construction of an action of this torus called the Gelfand-Tsetlin action   \cite{gtac}, but it has nothing to do with the problems considered in the present paper since this is an action on another space.

\subsection{A function $J_{\gamma}^s$}
We need functions that are hypergeometric in the Horn's sense  (see \cite{sts}),  which are generalizations of  a $\Gamma$-series.

Remind that  $k=rkB_{}$,  put

\begin{equation}
\label{fs}
J_{\gamma}^s(z):=\sum_{t\in\mathbb{Z}^k}\frac{(t+1)...(t+s)}{(\gamma+tv)!}z^{\gamma+tv}.
\end{equation}

The multi-index notation is used:  $(t+1)...(t+s):=\prod_{\alpha=1}^k(t_{\alpha}+1)...(t_{\alpha}+s_{\alpha})$.
% и заменяем гамма-функцию на факториалы, так как все показатели - целые.

Note that these functions depend on the choice of  the  shift vector  $\gamma$ in the class $\gamma\, mod \,B_{}$.

In the lattice  $B$   the base  \eqref{ijs2} is fixed and a notation  $\alpha$ for the index  numeration the base vectors was introduced (see definition  \ref{ia}).   With each base vector $v^{\alpha}=v_{i,j,x,X}$ of type     \eqref{ijs2} one associates a differential operator  $\mathcal{O}_{\alpha}$,  called the GKZ operator:

$$
\mathcal{O}_{\alpha}:= \frac{\partial^2 }{\partial z_{1,...,i-1,i ,X}\partial    z_{1,...,i-1,j,xX}}- \frac{\partial^2 }{\partial z_{1,...,i-1,j ,X}\partial    z_{1,...,i-1,i,xX}}.
$$

This differential operator appears in the equation  \eqref{e2}  of the GKZ system associated with the lattice  $B$.

\begin{lemma}
	For   $\alpha\in\{1,...,k\}$  one has
	$$
	\mathcal{O}_{\alpha}J_{\gamma}^s(z)=s_{\alpha}J_{\gamma-v^{\alpha}_+}^{s-e_{\alpha}}(z).
	$$
\end{lemma}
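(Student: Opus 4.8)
The plan is to compute the action of $\mathcal{O}_{\alpha}$ on $J_{\gamma}^s$ termwise. First I note that $J_{\gamma}^s$ is, like $\mathcal{F}_{\gamma}$, a finite sum: the polynomial factor $(t+1)\cdots(t+s)$ in the numerator does not affect finiteness, while the factor $1/(\gamma+tv)!$ vanishes (by the convention that a factorial of a negative integer is $\infty$) unless all coordinates of $\gamma+tv$ are nonnegative, and this happens for only finitely many $t$. Hence termwise differentiation is unproblematic. Writing $v^{\alpha}=v_{i,j,x,X}$ with $v^{\alpha}_+=e_{iX}+e_{jxX}$ and $v^{\alpha}_-=e_{jX}+e_{ixX}$ (the four indices are distinct because $i<j<x<X$), the operator is $\mathcal{O}_{\alpha}=\partial_{iX}\partial_{jxX}-\partial_{jX}\partial_{ixX}$. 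The key elementary fact, which is just a twofold application of property $2$ of $\Gamma$-series, is that for a single monomial
$$
\partial_{iX}\partial_{jxX}\,\frac{z^{x}}{x!}=\frac{z^{\,x-v^{\alpha}_+}}{(x-v^{\alpha}_+)!},\qquad \partial_{jX}\partial_{ixX}\,\frac{z^{x}}{x!}=\frac{z^{\,x-v^{\alpha}_-}}{(x-v^{\alpha}_-)!}.
$$

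Applying this to each term of $J_{\gamma}^s$ and denoting the numerator coefficient by $c_s(t):=\prod_{\beta}(t_{\beta}+1)\cdots(t_{\beta}+s_{\beta})$, I obtain
$$
\mathcal{O}_{\alpha}J_{\gamma}^s=\sum_{t\in\mathbb{Z}^k}c_s(t)\left[\frac{z^{\,\gamma+tv-v^{\alpha}_+}}{(\gamma+tv-v^{\alpha}_+)!}-\frac{z^{\,\gamma+tv-v^{\alpha}_-}}{(\gamma+tv-v^{\alpha}_-)!}\right].
$$
In the first sum the exponent is already $(\gamma-v^{\alpha}_+)+tv$. For the second sum I use $v^{\alpha}_+-v^{\alpha}_-=v^{\alpha}$, so that $\gamma+tv-v^{\alpha}_-=(\gamma-v^{\alpha}_+)+(t+e_{\alpha})v$; substituting $t\mapsto t-e_{\alpha}$ reindexes it onto the same lattice $(\gamma-v^{\alpha}_+)+tv$ with coefficient $c_s(t-e_{\alpha})$. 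Combining the two sums gives
$$
\mathcal{O}_{\alpha}J_{\gamma}^s=\sum_{t\in\mathbb{Z}^k}\bigl[c_s(t)-c_s(t-e_{\alpha})\bigr]\frac{z^{\,(\gamma-v^{\alpha}_+)+tv}}{\bigl((\gamma-v^{\alpha}_+)+tv\bigr)!}.
$$

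The remaining step, which is the only real computation, is the Pochhammer-difference identity $c_s(t)-c_s(t-e_{\alpha})=s_{\alpha}\,c_{s-e_{\alpha}}(t)$. Only the $\beta=\alpha$ factor differs between $c_s(t)$ and $c_s(t-e_{\alpha})$, so the difference equals
$$
(t_{\alpha}+1)\cdots(t_{\alpha}+s_{\alpha})-t_{\alpha}(t_{\alpha}+1)\cdots(t_{\alpha}+s_{\alpha}-1)=s_{\alpha}\,(t_{\alpha}+1)\cdots(t_{\alpha}+s_{\alpha}-1),
$$
times the common factors $\prod_{\beta\neq\alpha}(t_{\beta}+1)\cdots(t_{\beta}+s_{\beta})$, which is exactly $s_{\alpha}c_{s-e_{\alpha}}(t)$ (valid also when $s_{\alpha}=0$, where both sides vanish). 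Substituting this back identifies the sum as $s_{\alpha}J_{\gamma-v^{\alpha}_+}^{s-e_{\alpha}}(z)$, proving the lemma. The main point to get right is the bookkeeping in the reindexing step — tracking the relation $v^{\alpha}=v^{\alpha}_+-v^{\alpha}_-$ so that both shifted sums land on the common lattice $(\gamma-v^{\alpha}_+)+B$ — after which the falling/rising factorial identity delivers the factor $s_{\alpha}$ automatically.
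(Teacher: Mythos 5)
Your proof is correct and follows essentially the same route as the paper's: apply the two second-order derivatives termwise, reindex the $v^{\alpha}_-$ sum by $t\mapsto t-e_{\alpha}$ using $v^{\alpha}=v^{\alpha}_+-v^{\alpha}_-$ so both sums sit on the lattice $(\gamma-v^{\alpha}_+)+B$, and finish with the difference identity $c_s(t)-c_s(t-e_{\alpha})=s_{\alpha}c_{s-e_{\alpha}}(t)$. The only additions are your explicit remarks on finiteness of the sum and on the $s_{\alpha}=0$ case, which the paper leaves implicit.
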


\proof

Denote $(t+1)...(t+s)$ as  $c_t$.    Using the multi-index notation one can write shortly   $\mathcal{O}_{\alpha}=\frac{\partial^2}{\partial z^{v^{\alpha}_+}}-\frac{\partial^2}{\partial z^{v^{\alpha}_-}}$. Then

\begin{align*}
&\frac{\partial^2}{\partial z^{v^{\alpha}_+}}J_{\gamma}^s(z)=\sum_{t\in\mathbb{Z}^k}\frac{c_tz^{\gamma+tv-v^{\alpha}_+}}{(\gamma+tv-v^{\alpha}_+)!}.
\end{align*}

\begin{align*}
&\frac{\partial^2}{\partial z^{v^{\alpha}_-}}J_{\gamma}^s(z)=\sum_{t\in\mathbb{Z}^k}\frac{c_tz^{\gamma+tv-v^{\alpha}_-}}{(\gamma+tv-v^{\alpha}_-)!}=
\sum_{t\in\mathbb{Z}^k}\frac{c_{t-e_{\alpha}}z^{\gamma+tvv^{\alpha}_+}}{(\gamma+tv-v^{\alpha}_+)!},
\end{align*}

where $t-e_{\alpha}=(t_1,...,t_{\alpha}-1,...,t_k)$, where $e_{\alpha}:=(0,....,1_{\text{at the place  } \alpha},...,0)$.
Thus

\begin{align*}
&\mathcal{O}_{\alpha}J_{\gamma}^s(z)=\sum_{t\in\mathbb{Z}^k} \frac{(c_t-c_{t-1^{\alpha}})z^{\gamma+tv-v^{\alpha}_+}}{(\gamma+tv-v^{\alpha}_+)!}.
\end{align*}

But $c_t-c_{t-e_{\alpha}}=(t+1)...(t+s)-t...(t+s-e_{\alpha})=s_{\alpha}(t+1)...(t+s-e_{\alpha})$.

\endproof

Note an analogy

$$
\frac{d}{dz_{{\alpha}}}=s_{\alpha}z^{s-e_{\alpha}}\leftrightarrow \mathcal{O}_{\alpha} J_{\gamma}^s=s_{\alpha}J_{\delta-v^{\alpha}_+}^{s-e_{\alpha}}.
$$

This analogy makes possible the following construction.  One has 

$$
e^z=\sum_{s\in\mathbb{Z}_{\geq 0}^k}\frac{1}{s!}z^s  \Rightarrow  \frac{d}{dz}e^z=e^z.
$$

Define a hyperexpontent:

$$
e^{J}:=\sum_{s\in\mathbb{Z}_{\geq 0}^k}\frac{1}{s!}J^s_{\delta+sv_+}\Rightarrow  \mathcal{O}_i e^{J}= e^{J}.
$$

Continue this construction. Consider an operator  $\frac{\partial^2}{\partial a^{v_0}}$. Define a modified hyperexponent 

$$
\me^{J}:=\sum_{s\in\mathbb{Z}_{\geq 0}^k}\frac{(-1)^s}{s!}J^s_{\delta-sr},
$$

where $r_{\alpha}=v^{\alpha}_0-v^{\alpha}_+$.
It satisfies the equations 

$$
\mathcal{O}_{\alpha}\me^{J}=-\frac{\partial^2}{\partial z^{v^{\alpha}_0}}\me^{J}.
$$

Note that since function of complex variables are considered, the operator    $ \mathcal{O}_{\alpha} $  is essentially the Laplace operator and  $\mathcal{O}_{\alpha}+\frac{\partial^2}{\partial z^{v_0}}$  is a wave operator.

%\begin{remark}
%Функции   $J_{\gamma}^s(z)$ имеют следующее описание не языке функций Горна. Пусть  $z\in\mathbb{C}^k$.  Тогда  $$H(z)=\sum_{k\in \mathbb{Z}}c(t)\zeta^t$$ называется рядом Горна, если

%$$\frac{c(t+e_{\alpha})}{c(t)}$$ есть рациональная функция от  $t$, иными словами, если

%$$\frac{c(t+e_{\alpha})}{c(t)}=\frac{P_{\alpha}(t)}{Q_{\alpha}(t)},$$
%где  $P_{\alpha},Q_{\alpha}$ -   многочлены (см. \cite{sts}, \cite{GG})).

%С  $\Gamma$-рядом  связан следующий ряд Горна. Напишем

%\begin{align*}
%&\sum_{x=\gamma+t_1v_1+...+t_kv_k}\frac{z^x}{x!}=\sum_{t}\frac{z^{\gamma+tv}}{(\gamma+tv)!}=z^{\gamma}\sum %\frac{(z^{v})^{t}}{(\gamma+t_1v_1+...+t_kv_kN)!}=\\
%&=z^{\gamma}\sum_{t}\frac{\zeta^t}{(\gamma+tv)!}=z^{\gamma}H_{\gamma}(\zeta),
%\end{align*}

%где

%$$
%\zeta_1=z^{v_1},...,\zeta_k=z^{v_N}.
%$$

%Тогда

%\begin{align*}
%&J^{s}_{\gamma}(z)=\sum_{t}\frac{(t+1)...(t+s)z^{\gamma+tv}}{(\gamma+tv)!}=z^{\gamma}\sum_{t}\frac{(t+1)...(t+s)\zeta^{t}}{(\gamma+tv)!}=\\
%&=z^{\gamma}(\frac{d}{d\zeta})^s(\zeta^sH_{\gamma}(\zeta)).
%\end{align*}

%\end{remark}

\section{The antisymmetrized  GKZ system}

\label{agkz}

Introduce  the antisymmetrized  GKZ system which is an important instrument in further considerations.  This a system of partial differential equations. In this Section we construct a base in the space of it's polynomial solutions.

We have introduced the GKZ operator:

$$
\mathcal{O}_{\alpha}= \frac{\partial^2 }{\partial z_{1,...,i-1,i ,X}\partial    z_{1,...,i-1,j,xX}}- \frac{\partial^2 }{\partial z_{1,...,i-1,j ,X}\partial    z_{1,...,i-1,i,xX}}.
$$

Let us associate with it an    {\it  antisymmetrized } GKZ operator  (an A-GKZ operator):
 
  \begin{align}
  \begin{split}
  \label{pagkz}
  &\bar{\mathcal{O}}_{\alpha}:=\frac{\partial^2 }{\partial z_{1,...,i-1,i ,X}\partial    z_{1,...,i-1,j,xX}}- \frac{\partial^2 }{\partial z_{1,...,i-1,j ,X}\partial    z_{1,...,i-1,i,xX}}+\\&+\frac{\partial^2 }{\partial z_{1,...,i-1,x,X}\partial    z_{1,...,i-1,i,jX}},
  \end{split}
  \end{align}
 
 
  \begin{definition}
  	{\it The antysimmetrized GKZ system } (the A-GKZ system)  is the following system of partial differential equations \begin{equation}\label{systema}\bar{\mathcal{O}}_{\alpha}F=0.\end{equation}
  \end{definition}
 Note that in this system the analogues of homogeneity conditions  \eqref{e1} from the system GKZ are omitted.

Introduce functions:
	\begin{equation}
	\label{fgm}
	F_{\gamma}(z):=\me^J=\sum_{s\in\mathbb{Z}_{\geq 0}^k}\frac{(-1)^s}{s!}      J_{\gamma-sr}^s(z),
	\end{equation}

where  $s!=\prod_i s_i!$.   When differentiates the formula  \eqref{fs},  one gets

\begin{equation}
\label{pdf}
\frac{\partial}{\partial z_{X}}J_{\gamma}^s(z)=J_{\gamma-e_{X}}^s(z),\,\,\,\frac{\partial}{\partial z_{X}}F_{\gamma}(z)=F_{\gamma-e_{X}}(z),
\end{equation}

where  $X\subset \{1,...,n\}$ is a proper subset and  $e_{X}$ is a unit vector corresponding to the coordinate    $z_X$.

%Выше было установлено такое утверждение.
%\begin{lemma}
%	\label{l1}
%	Функции $F_{\gamma}(z)$ являются  решениями  антисимметризованной сиситемы ГКЗ (А-ГКЗ)
%%	\begin{equation}\label{systema}\bar{\mathcal{O}}_{\alpha}F=0.\end{equation}
%\end{lemma}

Let us prove the following statement.

\begin{theorem}
	 \label{agkz}
Consider a set of vectors $\{\gamma_p\}$, $\gamma_p\in\mathbb{Z}^N$, that satisfies the following conditions.

\begin{enumerate}
	\item  Fro every vector  $\gamma_p$ there exists   $b\in B$, such that the vector $\gamma_p+b$ has only non-negative coordinates.
	\item The set   $\{\gamma_p\}$ is maximal set that consists of vectors that satisfy the condition   1 and which is linearly independent in  $\mathbb{Z}^N/B$.
\end{enumerate}
%	 функций $F_{\gamma}(z)$, такой что в этом наборе векторы  $\gamma$  образуют  базис  $\mathbb{Z}^N/B$. Тогда полученный набор $\{F_{\gamma}(z)\}$  образует базис в пространстве полиномиальных решений системы  А-ГКЗ.
\end{theorem}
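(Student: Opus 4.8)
The plan is to prove that the functions $F_{\gamma_p}$ are linearly independent polynomial solutions of the A-GKZ system and that they span the entire space of polynomial solutions; the advertised bijection with the $\Gamma$-series $\mathcal{F}_{\gamma_p}$ then emerges as the passage to ``leading terms''. That each $F_{\gamma_p}$ is a polynomial solution is almost immediate from the machinery already built. Polynomiality holds because every $J^s_{\gamma}$, like $\mathcal{F}_\gamma$, is a finite sum: the base vectors $v^\alpha$ have coordinates of both signs, so only finitely many exponents $\gamma+tv$ are non-negative, and the factorials in the denominators annihilate the rest. The solution property is exactly the hyperexponent identity obtained just before the theorem: since $F_\gamma=\me^{J}$ satisfies $\mathcal{O}_\alpha F_\gamma=-\frac{\partial^2}{\partial z^{v^\alpha_0}}F_\gamma$, one gets $\bar{\mathcal{O}}_\alpha F_\gamma=\mathcal{O}_\alpha F_\gamma+\frac{\partial^2}{\partial z^{v^\alpha_0}}F_\gamma=0$ for every $\alpha$.

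The key device for both independence and completeness is a grading of $\mathbb{Z}^N$ by a linear functional $L$ that is constant on $B$-classes and strictly positive on each $r^\alpha$. I would take $L=\sum_{p\le q}C^{\,p}\chi^q_p$ with a constant $C>1$; being a combination of the generators $\chi^q_p$ of $B^\perp$, it descends to $\mathbb{Z}^N/B$ and is constant on each class. Writing $r^\alpha=e_{A\cup\{x\}}+e_{A\cup\{i,j\}}-e_{A\cup\{i\}}-e_{A\cup\{j,x\}}$ with $A=\{1,\dots,i-1\}\cup X$ and evaluating the indicator $\chi^q_p$, a short computation shows that in the Gelfand--Tsetlin coordinates $(m_{p,q})=(\chi^q_p(\cdot))$ the vector $r^\alpha$ only adds $+1$ in row $i+1$ and $-1$ in row $i$ in the columns $q=j,\dots,x-1$; hence $L(r^\alpha)=\sum_{q=j}^{x-1}(C^{\,i+1}-C^{\,i})>0$. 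Consequently the expansion $F_\gamma=\sum_s\frac{(-1)^s}{s!}J^s_{\gamma-sr}$ is supported in $L$-levels $\le L(\gamma)$, the top level being occupied solely by the $s=0$ term $J^0_\gamma=\mathcal{F}_\gamma$. Linear independence of the $F_{\gamma_p}$ then follows by triangularity: in a vanishing combination, restrict to the top $L$-level among the occurring $\gamma_p$; since distinct classes $[\gamma_p]$ (guaranteed by condition 2) have disjoint monomial supports and each $\mathcal{F}_{\gamma_p}$ is non-zero (condition 1 furnishes a non-negative representative, hence a surviving monomial), all top coefficients vanish, and one descends.

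For completeness, which I expect to be the main obstacle, I would take an arbitrary polynomial solution $F$ and decompose it by $\mathbb{Z}^N/B$-class, $F=\sum_{[\delta]}F_{[\delta]}$. Since $\mathcal{O}_\alpha$ preserves classes while $\frac{\partial^2}{\partial z^{v^\alpha_0}}$ shifts a class by $-[r^\alpha]$, the equation $\bar{\mathcal{O}}_\alpha F=0$ unwinds into the recursion $\mathcal{O}_\alpha F_{[\delta]}=-\frac{\partial^2}{\partial z^{v^\alpha_0}}F_{[\delta+r^\alpha]}$; in particular the $L$-maximal components satisfy the pure toric equations $\mathcal{O}_\alpha F_{[\delta]}=0$. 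Here I would invoke the rigidity of the toric GKZ relations: a polynomial supported on a single class and annihilated by all $\mathcal{O}_\alpha$ is determined up to a scalar by the two-term recursions along the generators $v^\alpha$ of $B$, hence is a multiple of $\mathcal{F}_{[\delta]}$ (and must vanish unless $[\delta]$ is effective, a polynomial having only non-negative exponents). Thus every top component equals $c\,\mathcal{F}_{[\delta]}$ for an effective class, represented by some $\gamma_p$; subtracting $c\,F_{\gamma_p}$, whose own $L$-levels are $\le$ the top, strictly lowers the top level while preserving solutionhood, and the induction on the finitely many $L$-levels in the support of $F$ terminates, giving $F\in\operatorname{span}\{F_{\gamma_p}\}$. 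The delicate points to secure are precisely the one-dimensionality of the single-class toric solution space and the check that the downward recursion never leaves the realm of polynomial solutions.
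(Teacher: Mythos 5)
Your proposal is correct and follows essentially the same route as the paper: decompose a polynomial solution into its $(\gamma+B)$-components, observe that the components maximal with respect to an order generated by adding the $r^\alpha$ are annihilated by the pure GKZ operators and are therefore proportional to $\Gamma$-series, then subtract the irreducible solutions $F_{\gamma_p}$ and induct, with linear independence read off from the top component $\mathcal{F}_{\gamma_p}$. Your functional $L=\sum_{p\le q}C^{p}\chi^q_p$ simply linearizes the paper's partial order $\preceq$ and folds its separate termination argument (which likewise uses the $\chi^q_p$ and the finiteness of the sets $M_\gamma$) into the boundedness of $L$ on effective classes; incidentally, your signs for the action of $r^\alpha$ on a diagram ($-1$ in row $p=i$, $+1$ in row $p=i+1$) are the correct ones, consistent with the paper's own later remark that adding $r^\alpha$ adds $[0\cdots-1\cdots+1\cdots0]$ to a row.
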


The function $F_{\gamma}$ is  called an   {\it irreducible }  solution of the system  \eqref{systema}.  Note that this solution is defined by the vector  $\gamma$, not by the class  $\gamma+B$.
\proof

Before it was proved that the function $F_{\gamma}(z)$  is a solution of the A-GKZ system.

For a monomial  $z^{\gamma}$  the vector of exponents  $\gamma$  is called  {\it a support } of this monomial. {\it A support of a function},  presented as  a sum of a power series,  is set of support of monomials occurring in this series with non-zero coefficients. A support of a function  $F$ is denoted as $supp F$.

%Определим степень носителя $s_1w_1+...+s_Nw_N+B_{GC}$ формулой

%$$
%deg (s_1w_1+...+s_Nw_N)+B_{GC}):=s_1+...+s_N
%$$

Take a solution  $F$.  Represent  $suppF$ as a union of subsets of type $\gamma+B$.
For each such subset take in   $F$ monomials whose supports belong to this subset.  Denote the resulting function as   $F^{\gamma}$. If this function satisfies the system    $\forall \alpha\,\,\,\mathcal{O}_{\alpha}(F^{\gamma})=0$,  then the corresponding support is called extreme (or {\it an extreme point} in
$supp F$). The term  "point"  $\,\,$  is correct since when one considers the support  $mod B$ then it becomes a point.

An irreducible solution  $F_{\delta}$  has a unique extreme point  $\delta+B$.

To prove the Theorem let us first prove the Lemma.

\begin{lemma}
	\label{l2}
Every polynomial solution of the system 	 \eqref{systema}  can be presented as a linear combination of irreducible solutions.
\end{lemma}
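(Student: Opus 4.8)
The plan is to split a solution into pieces living on the cosets of $B$ and then to strip off irreducible solutions one coset at a time, working downward from a top coset. First I would decompose any solution as $F=\sum_{c}F^{c}$, where $c$ runs over the classes of $\mathbb{Z}^{N}/B$ that meet $\operatorname{supp}F$ and $F^{c}$ (the $F^{\gamma}$ of the paper) collects the monomials of $F$ whose exponents lie in $c$. Writing $\mathcal{O}_{\alpha}=\frac{\partial^{2}}{\partial z^{v^{\alpha}_{+}}}-\frac{\partial^{2}}{\partial z^{v^{\alpha}_{-}}}$ and $\bar{\mathcal{O}}_{\alpha}=\mathcal{O}_{\alpha}+\frac{\partial^{2}}{\partial z^{v^{\alpha}_{0}}}$, and using $v^{\alpha}_{+}\equiv v^{\alpha}_{-}\pmod{B}$ together with $v^{\alpha}_{0}=v^{\alpha}_{+}+r^{\alpha}$, the operator $\bar{\mathcal{O}}_{\alpha}$ carries $F^{c}$ into the two cosets $c-v^{\alpha}_{+}$ and $c-v^{\alpha}_{+}-r^{\alpha}$. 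Collecting the terms of $\bar{\mathcal{O}}_{\alpha}F=0$ in a fixed coset therefore splits the A-GKZ system into the decoupled family
\begin{equation}
\mathcal{O}_{\alpha}F^{c}+\frac{\partial^{2}}{\partial z^{v^{\alpha}_{0}}}F^{c+r^{\alpha}}=0\qquad(\text{all cosets }c,\ \text{all }\alpha).
\end{equation}
This identity is what makes the peeling possible.

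Next I would grade the cosets by the shifts $r^{\alpha}$. Since the vectors $\chi_{p}^{q}$ span $B^{\perp}$, the functional $\ell:=\sum_{1\le p\le q\le n}p\,\chi_{p}^{q}$ vanishes on $B$ and hence descends to $\mathbb{Z}^{N}/B$. Setting $W(n):=\sum_{p=1}^{n}p=\binom{n+1}{2}$, a direct count of how many indices $\le q$ lie in each of the four index sets entering $r^{\alpha}$ turns the $q$-summand of $\langle\ell,r^{\alpha}\rangle$ into a (nonnegative) second difference of $W$, which is strictly positive exactly in the range $j\le q<x$ (nonempty since $j<x$); by strict convexity of $W$ this gives $\ell(r^{\alpha})>0$ for every $\alpha$. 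Thus $\ell$ is a grading with $\ell(c+r^{\alpha})>\ell(c)$, and on $\operatorname{supp}F_{\delta}$ the extreme coset $\delta$ is the unique $\ell$-maximal one. Now choose a coset $c$ that is $\ell$-maximal in the finite set $\operatorname{supp}F$: then $F^{c+r^{\alpha}}=0$ for all $\alpha$, so the decoupled equations force $\mathcal{O}_{\alpha}F^{c}=0$, i.e. $c$ is extreme and $F^{c}$ is a polynomial solution of the plain GKZ system supported on the single coset $c$.

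The crux is the sub-lemma that such a single-coset GKZ solution is a scalar multiple of the $\Gamma$-series, $F^{c}=\lambda\,\mathcal{F}_{c}$. I would prove this by writing $F^{c}=\sum_{t}g_{t}z^{c+tv}$ and comparing coefficients in $\mathcal{O}_{\alpha}F^{c}=0$: because $\frac{\partial^{2}}{\partial z^{v^{\alpha}_{+}}}z^{c+tv}$ and $\frac{\partial^{2}}{\partial z^{v^{\alpha}_{-}}}z^{c+(t-e_{\alpha})v}$ produce the same monomial, one obtains a two-term recurrence tying $g_{t}$ to $g_{t-e_{\alpha}}$ whose only finitely supported solution is $g_{t}=\lambda/(c+tv)!$, i.e. the coefficients of $\mathcal{F}_{c}$. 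This is the step needing the most care: I must show that these recurrences connect all lattice points of the polytope $\{t:c+tv\ge 0\}$ and stay consistent on its boundary, where the falling factorials produced by $\frac{\partial^{2}}{\partial z^{v^{\alpha}_{\pm}}}$ degenerate; this connectivity, which encodes the combinatorics of the base $\mathcal{I}$, is exactly what forces the solution space to be one-dimensional. Since $F^{c}\neq 0$ is a polynomial, its support lies in the non-negative orthant, so $c$ meets that orthant and hence satisfies condition~(1) of the Theorem; therefore $F_{c}$ is a genuine irreducible solution, and by the $s=0$ term of \eqref{fgm} its extreme part is precisely $\mathcal{F}_{c}$.

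Finally I would run the induction. The difference $G:=F-\lambda F_{c}$ is again a polynomial A-GKZ solution; its component on the coset $c$ cancels, while $F_{c}$ contributes only to the cosets $c-sr$ with $s\in\mathbb{Z}_{\ge 0}^{k}$, all of strictly smaller $\ell$-value except $c$ itself. Hence $G$ acquires no support above the $\ell$-level of $c$ and has lost the coset $c$, so either its top $\ell$-value drops or the number of $\ell$-maximal cosets at that level decreases. As $\operatorname{supp}F$ and all supports introduced lie in a fixed finite range of $\ell$, the process terminates after finitely many steps and exhibits $F$ as a finite linear combination of irreducible solutions $F_{c}$, which is the assertion of the Lemma.
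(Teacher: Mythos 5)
Your argument is correct and follows the same overall strategy as the paper's proof: decompose $F$ into coset components $F^{\gamma}$, show that components on maximal cosets are killed by the plain GKZ operators and hence are scalar multiples of $\Gamma$-series, subtract the corresponding irreducible solutions $F_{\gamma}$, and iterate. Where you genuinely depart from the paper is in how you control the order and the termination. The paper works with the partial order $\gamma+B\preceq\delta+B$ directly and proves termination by building the finite sets $M_{\gamma}=\bigcup\{\gamma-sr+B\}$ and showing, via the functionals $\chi^m_u$, that every iterate's support stays inside $\bigcup_{\gamma}M_{\gamma}$; this is the longest and most delicate portion of its proof. You instead linearize the order by the single functional $\ell=\sum_{p\le q}p\,\chi_p^q$ and check $\ell(r^{\alpha})>0$ (your second-difference computation is right, and in fact gives $\ell(r^{\alpha})=x-j$; note that the paper's displayed formula for $\chi^m_u(r^{\alpha})$ has its two nonzero values with interchanged signs, so do not be alarmed that it appears to give the opposite sign). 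This buys you, essentially for free, both the existence of extreme cosets and a clean double descent (top $\ell$-level, then the finite number of cosets at that level), which is a real simplification over the paper's $M_{\gamma}$ argument; your explicit coset-wise decoupling $\mathcal{O}_{\alpha}F^{c}+\frac{\partial^{2}}{\partial z^{v^{\alpha}_{0}}}F^{c+r^{\alpha}}=0$ is also a tidier formulation of what the paper does by inspecting supports. The one point where both you and the paper are thin is the sub-lemma that a GKZ solution supported on a single coset is proportional to $\mathcal{F}_{c}$: the paper simply asserts that all coefficients $c_t$ are equal, while you at least name the real difficulty (connectivity of the two-term recurrences over the lattice points of $\{t:c+tv\ge 0\}$ and their degeneration where the falling factorials vanish) without resolving it; a complete write-up should supply that connectivity argument, since it is exactly where the choice of the base $\mathcal{I}$ enters.
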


\begin{proof} Take an arbitrary solution   $F$ and decompose it into a sum of functions 
	$F^{\gamma}$ with supports   $\gamma+B$.
	
Introduce a partial order on the sets  $\gamma+B$. Put
	\begin{equation}
\label{por1}
	\gamma+B\preceq\delta+B, \text{ if $\gamma+sr=\delta\,\,\, mod B$, $s\in\mathbb{Z}^k_{\geq 0}$.}
	\end{equation}
	
	%если  $\gamma+sr=\delta\,\,\, mod B$, $s\in\mathbb{Z}^k_{\geq 0}$.

Since only polynomial solutions are considered, there exist summands  $F^{\gamma}$ with supports $\gamma+B$ which are maximal with respect to the order.  Let us prove that these summands are extreme. Indeed one has
	
	$$
	\bar{\mathcal{O}}_{\alpha}F^{\gamma}=\mathcal{O}_{\alpha}F^{\gamma}+\frac{\partial^2}{\partial z^{v_0^{\alpha}}}F^{\gamma}.
	$$
	
	If $suppF^{\gamma}=\gamma+B$, then $supp ( \mathcal{O}_{\alpha}F^{\gamma})=\gamma-v_{\alpha}^++B$,  and $supp(\frac{\partial^2}{\partial z^{v^{\alpha}_0}}F^{\alpha})=\gamma-v^{\alpha}_0+B$. Since $\bar{\mathcal{O}}_{\alpha}F=0$, then by considering the supports one concludes  that the summand  $\mathcal{O}_{\alpha}F^{\gamma}$, in the case when it is non-zero, contracts with one of the expressions  of type   $\frac{\partial^2}{\partial z^{v_0^{\alpha}}}F^{\delta}$ or  $\mathcal{O}_{\alpha}(F^{\delta})$. By considering supports one concludes that  $\mathcal{O}_{\alpha}F^{\gamma}$ can not contract with an expression of the same type for another  $\delta$.  That is why $\mathcal{O}_{\alpha}F^{\gamma}$  contracts with some $\frac{\partial^2}{\partial z^{v^{\alpha}_0}}F^{\delta}$. Then
	$supp F^{\delta}-v_{+}^{\alpha}=\gamma-v^{\alpha}_0$. This means that $supp F^{\delta}=\gamma+v_{+}^{\alpha}-v_0^{\alpha}+B$.  Thus  $supp F^{\delta}\succ \gamma+B$.  But the support   $\gamma+B$ is maximal hence this situation is not possible. Thus $\mathcal{O}_{\alpha}F^{\gamma}=0$.
	
	%Аналогично доказывается, что  $\mathcal{O}_2F^{\delta}=\mathcal{O}_3F^{\alpha}=0$.
	
	Hence an arbitrary solution has extreme points.
	The corresponding functions  $F^{\gamma}$  have supports of type  $\gamma+B$.  Thus one can write 
	
	$$
	F^{\gamma}=\sum_{t\in\mathbb{Z}^k}c_t\frac{A^{\gamma+tv}}{(\gamma+tv)!}
	$$
for some numeric coefficients  $c_t$.   Since   	$F^{\gamma}$  is non-zero, there exists   $b\in B$, such that the vector  $\gamma+b$ has only non-negative coordinates. Also since	
	 $F^{\gamma}$
is annihilated by the operators   $\mathcal{O}_{\alpha}$, one can conclude that  all $c_t$ are equal. Hence
	$F^{\gamma}$ are   $\Gamma$-series up to multiplication by a constant.

Let us describe  {\bf  a procedure },  that transforms a solution   $V$ of the A-GKZ system into a solution  $W$  of the same system.
	
	\begin{enumerate}
		\item
		Take an extreme point   $\gamma+B$  in $supp V$  and take the corresponding irreducible solution  $F_{\gamma}$.
		\item Subtract from 
		$V$ all the constructed  $F_{\gamma}$ with such a coefficient that the summands  $V^{\gamma}$  (defined by  $V$ by analogy with the definition of   $F^{\gamma}$ for  $F$) in  $V$ with supports  $\gamma+B$ are cancelled. This is possible since both in   $F_{\gamma}$ and in   $V$  the summands with a support in  $\gamma+B$ form a function proportional to a  $\Gamma$-series.
		
		\item The obtained solution is denoted as   $W$.
	\end{enumerate}
 The constructed solution $W$ has the following property: the extreme points in  $suppW$  are strictly lower that the extreme points of    $suppV$ with respect to the order   $\preceq$.

Now let us operate as follows. Take a solution   $F$, apply to it   {\bf the procedure } and obtain a new solution.
Take it's extreme points and apply  {\bf the procedure }  again and so on.
	
Let show that after a finite number of steps this procedure gives a zero function.  To prove it is enough to show that the supports of function that appear on all  the steps are subsets of some finite set.
	
For this purpose for every summand  $F^{\gamma}$ in  $F$  with a maximal support  $\gamma+B$ let us find a set of non-negative integers  $s^{\gamma}_{\alpha}\in\mathbb{Z}^k$, such that
	$
	\gamma-s^{\gamma}r+b:=\gamma-\sum_{\alpha}s^{\gamma}_{\alpha}r^{\alpha}+b
	$  has only non-negative coordinates for some $b\in B$.
	
This set is finite. Indeed, consider a functional $\chi^m_u$  introduced in  \eqref{xii1}.  They are defined by their action on the base vectors  $e_X$ where  $X\subset\{1,...,n\}$, by the following ruler:
	
	\begin{equation}
	\label{xii}
	\chi^m_u(e_X)=\begin{cases} 1,\text{if in  $X$  there are $\geq u$  indices  $\leq m$,} \\ 0 \text{ otherwise }.\end{cases}
	\end{equation}

One can note that   $\chi_u^m(b)=0$ for $b\in B$. For  a vector    $r^{\alpha}$, defined by the formula  \eqref{ra}, one has
	$$
	\chi_{u}^m(r^{\alpha})=\begin{cases}1\text{  for } u=i, j\leq m<y    \\  -1 \text{  for } u=i+1, j\leq m<y \\ 0   \text{    otherwise }. \end{cases}
	$$
	
	Consider first $r^{\alpha}$, such that for them  $i=1$.
When one subtracts from the vector $\gamma$ these vectors  $r^{\alpha}$ with positive coefficients the value of    $\chi_{1}^m$ diminishes. And subtraction of other  $r^{\alpha}$ with bigger value of  $i$ does no affect $\chi_{1}^m$. If then one adds  $b\in B$  then  $\chi_{1}^m$ remains unchanged.  Thus we come to a conclusion:  if one subtracts from  the vector $\gamma$  the vectors  $r^{\alpha}$  with  $i=1$ infinite number of times,  then on some step one obtains a vector such that
	$\chi_{1}^m$ is negative on this vector.  But such a vector can not have only non-negative coordinates since for such vectors  the functional  \eqref{xii} is non-negative.
	
	Then one considered the vectors $r^{\alpha}$, such that  $i=2$   and the functional $\chi_{2}^m$.  One concludes that it is possible to subtract them from  $\gamma$  only finite number of times ans so no.

	%Действительно, при вычитании векторов $s^{\gamma}_1r_1$ и $s^{\gamma}_3r_3$ уменьшается координата  $e_1$, причем это нельзя компенсировать прибавлением вектора  $b$, так как у него координата  $e_1$ всегда нулевая. Также при вычитании  $e_1$ уменьшается координата $e_{-1,1}$. При вычитании же  вектора $s^{\gamma}_2r_2$ уменьшается координата $e_{-1}$. Это можно компенсировать лишь вычитанием вектора  $v_1$, но при этом также уменьшается координата $e_{-1,1}$.  Из этого можно сделать вывод - из вектора  $\gamma$ можно лишь конечное число раз вычитать  $r_1,r_2,r_3$, так чтобы в классе  $mod B$ имелись бы вектора с полностью неотрицательными координатами.
	
	Introduce a notation:
	
	\begin{equation}
	\label{mgm}
M_{\gamma}=\bigcup\{\gamma-sr+B\},	\end{equation}
	
a union is taken over all obtained above 	  $s^{\gamma}=\{s^{\gamma}_{\alpha}\}$.
	
	One has  $suppF_{\gamma}\subset M_{\gamma}$ since $F_{\gamma}=\sum_{s\in\mathbb{Z}^k_{\geq 0}}\frac{(-1)^sJ_{\gamma-sr}^s}{s!}$ and
	$
	supp J_{\gamma-sr}^s=\gamma-sr+B
	$,   and the function $J_{\gamma-sr}^s$ is non-zero if and only if in the support there are vectors than have only non-negative coordinates.

Also note that if   $\delta\prec\gamma$ then $M_{\delta}\subset M_{\gamma}$.
	
	One has  $supp F\subset \bigcup_{\gamma} M_{\gamma}$ where the union is taken over  all extreme points $\gamma$.
	Indeed, suppose the opposite:  there exists  $\delta\in suppF$, but  $\delta\notin \bigcup_{\gamma} M_{\gamma}$.
	 Consider $F^{\delta}$.    Apply the arguments from the proof of the statement that the maximal points are extreme.  Then one concludes that if   $\mathcal{O}_iF^{\delta}\neq 0$ then  $\delta'=\delta+r_i\in supp F\,\,  mod B$.   Also $\delta\prec\delta'$ and $\delta'\notin \bigcup_{\gamma} M_{\gamma}$%.  %Indeed % the supports $\delta, \delta'$ are admissible. Thus 
	% If  $\delta'\in \bigcup_{\gamma} M_{\gamma}$, then the  lower support  $\delta$ also to this set belongs 
	 since otherwise  the  lower support  $\delta$ also  belongs  to  $\bigcup_{\gamma} M_{\gamma}$.
	Thus  we can increase  (relatively the order  $\prec$) the support adding no points from  $\bigcup_{\gamma} M_{\gamma}$ until we add  a point $\delta''\in supp F$, such that $\mathcal{O}_{\alpha}F^{\delta''}=0$. But this is an extreme point thus it belongs to $\bigcup_{\gamma} M_{\gamma}$. Hence we get a contradiction.
	
Thus on each step of {\bf  the procedure  }  the support belongs to the set  $\bigcup_{\gamma} M_{\gamma}$ where the union is taken over extreme points of the support of the function  $F$. This set is finite. Since on each step the support becomes smaller then after a finite number of steps one obtains an empty set.  This means that we have presented the functions   $F$  as a linear combination of functions  $F_{\gamma}$.

\end{proof}

\begin{lemma}
	\label{l3}
The functions  $F_{\gamma}$  form the formulation of the Theorem  \ref{agkz} are linearly independent.
	
\end{lemma}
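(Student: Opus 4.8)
The plan is to prove linear independence by isolating, in any vanishing linear combination, the monomials supported on a maximal coset and showing that only one summand reaches that coset. First I would record the support structure of an irreducible solution. By definition \eqref{fgm} we have $F_{\gamma_p}=\sum_{s\in\mathbb{Z}^k_{\geq 0}}\frac{(-1)^s}{s!}J^s_{\gamma_p-sr}$, and since $\operatorname{supp}J^s_{\gamma_p-sr}=\gamma_p-sr+B$, the term $s=0$ is exactly the $\Gamma$-series $\mathcal{F}_{\gamma_p}$ from \eqref{gmr} with support $\gamma_p+B$, while every term with $s\neq 0$ is supported on a coset $\gamma_p-sr+B\prec\gamma_p+B$ in the order \eqref{por1}. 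Thus every coset occurring in $\operatorname{supp}F_{\gamma_p}$ is $\preceq\gamma_p+B$, and the part of $F_{\gamma_p}$ supported exactly on $\gamma_p+B$ equals $\mathcal{F}_{\gamma_p}$, which is nonzero by condition~1 of Theorem~\ref{agkz}.

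Now suppose $\sum_p c_pF_{\gamma_p}=0$ and let $S=\{p:c_p\neq 0\}$; I want to show $S=\emptyset$. By condition~2 of Theorem~\ref{agkz} the vectors $\gamma_p$ are linearly independent in $\mathbb{Z}^N/B$, so distinct indices give distinct cosets $\gamma_p+B$. Choose $p_0\in S$ with $\gamma_{p_0}+B$ maximal among $\{\gamma_p+B:p\in S\}$ with respect to $\preceq$. I then extract from the identity $\sum_{p\in S}c_pF_{\gamma_p}=0$ only those monomials whose support lies in the coset $\gamma_{p_0}+B$. For $q\in S$ with $q\neq p_0$ a nonzero contribution would force $\gamma_{p_0}+B\preceq\gamma_q+B$, hence (by distinctness of the cosets) $\gamma_{p_0}+B\prec\gamma_q+B$, contradicting the maximality of $\gamma_{p_0}+B$. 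Therefore only $F_{\gamma_{p_0}}$ contributes, and by the first paragraph the $\gamma_{p_0}+B$-part of the combination is $c_{p_0}\mathcal{F}_{\gamma_{p_0}}$. Since this must vanish and $\mathcal{F}_{\gamma_{p_0}}\neq 0$, we get $c_{p_0}=0$, contradicting $p_0\in S$. Hence all $c_p=0$.

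The hard part, and the only genuinely nontrivial step, is justifying that a maximal element exists, i.e.\ that \eqref{por1} really is a partial order on cosets rather than merely a preorder; equivalently that no nonzero $s\in\mathbb{Z}^k_{\geq 0}$ satisfies $sr\in B$, where $r^\alpha$ is as in \eqref{ra}. I would establish this pointedness exactly as in the proof of Lemma~\ref{l2}, using the functionals $\chi^m_u$ of \eqref{xii}, which vanish on $B$. Given a nonzero nonnegative $s$, set $i_0=\min\{i^{(\alpha)}:s_\alpha>0\}$ and take $u=i_0$: the explicit values $\chi^m_u(r^\alpha)\in\{0,\pm1\}$ show that the summands with $i^{(\alpha)}>i_0$ contribute $0$ to $\chi^m_{i_0}(sr)$, that the summands with $i^{(\alpha)}<i_0$ are absent since their $s_\alpha=0$, and that the remaining summands with $i^{(\alpha)}=i_0$ all contribute with the same sign, so $\chi^m_{i_0}(sr)>0$ for a suitable $m$. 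This forces $sr\notin B$, which simultaneously gives antisymmetry of $\preceq$ and the existence of the maximal coset used above; everything else is routine bookkeeping on supports.
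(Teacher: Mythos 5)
Your proof is correct and follows essentially the same route as the paper's: isolate the monomials lying on a maximal coset $\gamma_{p_0}+B$, observe that only the $s=0$ term $\mathcal{F}_{\gamma_{p_0}}$ of $F_{\gamma_{p_0}}$ can land there (and is nonzero by condition~1), and conclude $c_{p_0}=0$. Your third paragraph, checking via the functionals $\chi^m_u$ that no nonzero $s\in\mathbb{Z}^k_{\geq 0}$ satisfies $sr\in B$ --- so that $\preceq$ is genuinely antisymmetric and a maximal coset exists --- makes explicit a point the paper leaves implicit, but it is the same argument, not a different one.
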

\proof

Indeed, let

\begin{equation}\label{in}\sum_pc_pF_{\gamma_p}=0.\end{equation}

Among the sets $\gamma_1+B$,$\gamma_2+B$,... choose a maximal element with respect to the order $\prec$. Consider the corresponding summand  $c_iF_{\gamma_i}$. Take in the expression \eqref{fgm} a summand $\mathcal{F}_{\gamma_i}$. Due to the conditions  1 from the Theorem \ref{agkz}, $\mathcal{F}_{\gamma_i}\neq 0$.   Since   $\gamma_i+B$ is maximal, one obtains that  $\mathcal{F}_{\gamma_i}$ cannot contract with any summand in   \eqref{in}. One obtains a contradiction.

The Theorem  \ref{agkz} is proved.

\endproof

\begin{remark}
The solution $F_{\gamma}$ has the following property. It's support is the set 
	 $M_{\gamma}$ of type \eqref{mgm}.
	 % $\bigcup_{s\in\mathbb{Z}_{\geq 0}^k}\gamma+B-sr$
	 If one   represents this functions as a sum of a series and takes summands with the support  $\gamma+B$ then one obtains  $\mathcal{F}_{\gamma}$.  This solution in some natural sense is the simplest of the A-GKZ system generates by the solutions $\mathcal{F}_{\gamma}$ of system GKZ.
\end{remark}

\section{ The Gelfand-Kapranov-Zelevinsky base}
\label{ra3}
Consider functions  $\mathcal{F}_{\gamma}(z)$ corresponding to shift vectors that are obtained in the following manner. Consider the set of all possible Gelfand-Tsetlin diagrams for an irreducible finite dimensional representation of  $\mathfrak{gl}_n$, construct the corresponding shifted lattices and take the corresponding shift vectors  $\gamma$. Substitute into these functions instead of variables  the  $z_X$  the determinants $a_X$. The resulting function is denoted as   $\mathcal{F}_{\gamma}(a)$.

 \begin{propos}
 	\label{pzh}
 	
 The functions $\mathcal{F}_{\gamma}(a)$  belong to the representations with the highest vector   \eqref{stv}
 \end{propos}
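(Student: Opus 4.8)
The plan is to identify the subrepresentation $V$ generated by $v_0$ with a multihomogeneity condition on polynomials in the determinants \eqref{dete}, and then to check that every monomial occurring in $\mathcal{F}_{\gamma}(a)$ satisfies this condition. Concretely, grade the polynomial algebra in the $a_X$ by assigning to $a_X$ the ``level'' $|X|$, the number of rows entering the minor; this refines to a multigrading whose $k$-th component records the total degree in minors of size $k$, for $k=1,\dots,n-1$. I will write $W_{(d_1,\dots,d_{n-1})}$ for the span of all products of minors whose total degree in size-$k$ minors equals $d_k$.

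First I would observe that this multigrading is preserved by the $\mathfrak{gl}_n$-action. Indeed, by \eqref{edet1} the operator $E_{i,j}$ sends a minor $a_{i_1,\dots,i_k}$ to a minor of the same size $k$ (or to zero), so acting by $E_{i,j}$ on a product of minors via the Leibniz rule never changes the number of factors of each size. Hence each $W_{(d_1,\dots,d_{n-1})}$ is a $\mathfrak{gl}_n$-submodule of the functional representation. The highest vector \eqref{stv} is, up to scalars, the monomial $\prod_k a_{1,\dots,k}^{m_k-m_{k+1}}$, which lies in $W_{(m_1-m_2,\dots,m_{n-1}-m_n)}$; and by the classical description of the irreducible representation in terms of minors \cite{zh} (equivalently, Borel--Weil) this component is irreducible with highest weight $[m_1,\dots,m_{n-1},0]$. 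Since it is invariant and contains $v_0\neq 0$, it coincides with the cyclic submodule $V=U(\mathfrak{gl}_n)v_0$. Thus it suffices to show that every monomial $a^x$ with $x\in\gamma+B$ lies in $W_{(m_1-m_2,\dots,m_{n-1}-m_n)}$.

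This last point is read off from the defining equations of the shifted lattice. Taking $q=n$ in Definition \ref{sdr}, the condition ``$X$ contains at least $p$ indices that are $\le n$'' is simply $|X|\ge p$, so for every $x\in\gamma+B$ one has $\sum_{|X|\ge p}x_X=m_{p,n}$, where $m_{p,n}=m_p$ is the $p$-th entry of the top row of the diagram, i.e.\ the highest weight. Subtracting the equations for consecutive $p$ yields $\sum_{|X|=k}x_X=m_{k,n}-m_{k+1,n}=m_k-m_{k+1}$, so $a^x$ has multidegree exactly $(m_1-m_2,\dots,m_{n-1}-m_n)$. (One may also verify directly from \eqref{ijs2} that each base vector of $B$ has two nonzero coordinates at level $i+|X|$ with signs $+,-$ and two at level $i+|X|+1$ with signs $-,+$, so its net degree vanishes at every level; the multidegree is therefore constant along $\gamma+B$ and depends only on $\gamma\bmod B$.) Consequently every monomial of the finite sum $\mathcal{F}_{\gamma}(a)=\sum_{x\in\gamma+B}a^x/x!$ lies in $W_{(m_1-m_2,\dots,m_{n-1}-m_n)}=V$, and hence so does $\mathcal{F}_{\gamma}(a)$ itself.

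The only genuinely representation-theoretic input is the identification of the multihomogeneous component $W_{(m_1-m_2,\dots,m_{n-1}-m_n)}$ with the irreducible module $V$, and I expect this to be the sole real obstacle; I would discharge it by quoting the description of the representation in minors from \cite{zh}. Everything else is bookkeeping with degrees. I emphasise that the Plücker relations, which make the monomials $a^x$ linearly dependent, are harmless here: we only need each monomial to belong to $V$, which is guaranteed by its multidegree, so no linear independence or dimension count is required.
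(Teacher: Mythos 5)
Your proof is correct and follows essentially the same route as the paper: both arguments come down to the observation that, by the classical description in \cite{zh}, membership in the cyclic module of \eqref{stv} is governed by the multidegree in minors of each size, and that the $q=n$ equations of Definition \ref{sdr} force every monomial of $\mathcal{F}_{\gamma}(a)$ to have multidegree $\bigl(m_1-m_2,\dots,m_{n-1}-m_n\bigr)$. The paper phrases the first step via Zhelobenko's explicit left-shift criteria ($L^-f=0$, $L_{i,i}f=m_if$, $(L_i^+)^{q_i+1}f=0$) rather than via irreducibility of the multigraded component, but the content and the external input are the same.
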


 \proof In the book  \cite{zh} it is proved that a function on the group belongs to the representation with the highest vector \eqref{stv} if and only if the following conditions hold.

 \begin{enumerate}
 	\item $L^{-}f(a)=0$, where  $L^-$  is left infinitesimal shift by  negative root element. Such a shift acts onto row indices of a determinant. If one writes it's action explicitly one sees that for a function of type   $f(a_X)$ this conditions always holds.
 	
 	\item  $L_{i,i} f(a)=m_if(a)$, $i=1,2,...,n$,  where the operators $L_{i,i}$  are left infinitesimal shifts by the elements   $E_{i,i}$.

 	\item $(L_i^+)^{q_i+1}f(a)=0$, $i=2,3,...,n$. The operators  $L_i^+$  are left infinitesimal shifts by positive simple root elements, that is by elements   $E_{i-1,i}$,  and  $q_{i}=m_{i-1}-m_{i}$.

 \end{enumerate}

The conditions   2 and 3  for a polynomial in determinants mean that in each monomial the sum of exponents of determinants of order  $i$ equals  $m_{i}-m_{i-1}$.  This conditions holds for a function  $\mathcal{F}_{\gamma}(a)$ corresponding to the shift vectors described above. Thus it belongs to the representation with the highest vector  \eqref{stv}.

 \endproof

Let us show that these functions form a base that we call  {\it the Gelfand-Kapranov-Zelevinsky base}.
 Let us also find it's relation to the Gelfand-Tsetlin base.

\subsection{The proof the fact that the functions  $\mathcal{F}_{\gamma}(a)$ form a base in a representation}

In the representation with the highest vector \eqref{stv} there are vectors $\mathcal{F}_{\gamma}(a)$ indexed by the Gelfand-Tsetlin diagram. To prove that they form a base it is sufficient to prove that they are linearly independent. If the variables $a_X$  are independent then the proof of the linear independence of the functions  $\mathcal{F}_{\gamma}(a)$ would be very simple. The problem is that the determinants  $a_X$  satisfy the  Plucker relations  (these are all relations between determinants of a square matrix, see \cite{cmb}).

The strategy to overcome this difficulty is the following. We  define a "canonical form"  of  $\mathcal{F}_{\gamma}(a)$ with respect to Plucker relations. Using it we derive that the functions $\mathcal{F}_{\gamma}(a)$ span a representation. Using the irreducibility property we conclude that $\mathcal{F}_{\gamma}(a)$  form a base in the representation with the highest vector \eqref{stv}.

To realize this strategy one  note the following. With a base vector 

$$v_{i,j,x,X}=(...,1_{z_{iX}},...,-1_{z_{jX}},...,-1_{z_{ixX}},...,1_{z_{jxX}},...)$$

one associates a Plucker relation of the following type
\begin{equation}
\label{spl}
a_{1,...,i-1,i,X}a_{1,...,i-i,j,x,X}-a_{1,...,i-1,j,X}a_{1,...,i-i,i,x,X}+a_{1,...,i-1,x,X}a_{1,...,i-i,i,j,X}=0
\end{equation}

Denote an ideal generated by these relations as  $IP$ (we do not discuss the question whether it coincides with the ideal$Pl$, generated by {\it all }  Plucker relations).

%Make the following observation.
Instead of determinants  $a_X$ consider independent variables   $A_X$ (we also use a notations  $A$  for the set of all variables  $A_X$). Introduce a notation. Take a polynomial in variables  $A_X$:

$$
f(A)=\sum_{\beta}c_{\beta}A^{u_{\beta}},\,\,\,\, c_{\beta}\in\mathbb{C},
$$

where  $\beta$  is some index enumeration monomials of this polynomial,  $u_{\beta}$  is vector of exponents of the corresponding  monomial and  $A^{u_{\beta}}$ is a multi-index notation for a monomial.   Relate with the polynomial a differential operator that is obtained by the substitution   $A_X\mapsto \frac{d}{dA_X}$:

$$
f(\frac{d}{dA})=\sum_{\beta}c_{\beta}(\frac{d}{dA})^{u_{\beta}},\,\,\,\, c_{\beta}\in\mathbb{C},
$$

Then
	$$f(a)=0 \,\,\,mod \,\,\, IP$$ if and only if the differential operator 
	$$
	f(\frac{d}{dA})
	$$  acts  as zero on the space of solutions of the A-GKZ system.  

Introduce a notation for this action
	
	\begin{equation}
\label{deist}
	 f(A)\curvearrowright  F(A):=f(\frac{d}{dA})  F(A).
	\end{equation}

 Since one has the base   $F_{\gamma}(A) $  in the space of solutions of the A-GKZ system, one has the following statement

	 \begin{lemma}

	$f(a)=0 \,\,\, mod \,\,\,\, IP$  if and only if 
	$$
f(\frac{d}{dA})\curvearrowright  F_{\gamma}(A)=0.
	$$

	 \end{lemma}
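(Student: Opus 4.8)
The plan is to read the lemma as an instance of apolarity (Macaulay inverse-system) duality. The starting observation is that the A-GKZ operators are literally the Plücker generators of $IP$ written as constant-coefficient differential operators: the relation \eqref{spl} attached to $v_{i,j,x,X}$ is the homogeneous quadratic $P_{\alpha}(A)=A^{v^{\alpha}_+}-A^{v^{\alpha}_-}+A^{v^{\alpha}_0}$ (notation \eqref{vpm}), and substituting $A_X\mapsto \frac{d}{dA_X}$ turns $P_{\alpha}$ into exactly the operator $\bar{\mathcal{O}}_{\alpha}$ of \eqref{pagkz}. Consequently the space $V$ of polynomial solutions of \eqref{systema} is precisely the inverse system of the homogeneous ideal $IP$: a polynomial $F$ satisfies $\bar{\mathcal{O}}_{\alpha}F=0$ for all $\alpha$ iff $g(\frac{d}{dA})F=0$ for every $g\in IP$, because the $P_{\alpha}$ generate $IP$.

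First I would set up the apolarity pairing on $R=\mathbb{C}[A_X]$, identifying a polynomial $g$ with the operator $g(\frac{d}{dA})$ and putting $\langle g,p\rangle:=\big(g(\tfrac{d}{dA})\,p\big)(0)$. Since $(\tfrac{d}{dA})^{u}A^{w}=u!\,\delta_{u,w}$, this pairing is perfect on each graded component, so for a graded subspace $U\subset R$ one has $(U^{\perp})^{\perp}=U$, where $U^{\perp}=\{p:\langle g,p\rangle=0\ \forall g\in U\}$. Homogeneity is genuinely used here: $IP$ is graded because its generators $P_{\alpha}$ have degree two, which is what makes double annihilation recover the ideal.

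The two implications then run as follows. For the easy direction, if $f(a)=0 \bmod IP$, i.e. $f(A)=\sum_{\alpha}g_{\alpha}(A)P_{\alpha}(A)$ in $R$, then applying the ring homomorphism $A_X\mapsto\frac{d}{dA_X}$ gives $f(\frac{d}{dA})=\sum_{\alpha}g_{\alpha}(\frac{d}{dA})\,\bar{\mathcal{O}}_{\alpha}$, and every term annihilates any $F\in V$ because $\bar{\mathcal{O}}_{\alpha}F=0$. For the converse, I would note that $V$ is stable under all $\frac{d}{dA_X}$ (being an inverse system), so the ideal structure lets one replace the condition $g(\frac{d}{dA})p=0$ by its single-scalar shadow $\langle g,p\rangle=0$ in both arguments; this yields $V=IP^{\perp}$ on one side and $\{f:f(\frac{d}{dA})\text{ kills }V\}=V^{\perp}$ on the other. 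Combining with the perfect-pairing identity, $\{f:f(\frac{d}{dA})\text{ kills }V\}=V^{\perp}=(IP^{\perp})^{\perp}=IP$, so $f(\frac{d}{dA})$ annihilates $V$ exactly when $f(A)\in IP$. Finally, because the $F_{\gamma}(A)$ form a basis of $V$ by Theorem \ref{agkz}, $f(\frac{d}{dA})$ kills $V$ iff it kills every $F_{\gamma}$, which is the stated equivalence.

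The main obstacle is the converse inclusion $V^{\perp}\subseteq IP$, which is exactly where the perfectness of the graded apolarity pairing and the homogeneity of the Plücker generators are indispensable; the two supporting facts I must verify with care are that the inverse system $V=IP^{\perp}$ is closed under differentiation (needed to pass between the pointwise annihilation $g(\frac{d}{dA})p=0$ and the scalar condition $\langle g,p\rangle=0$) and that $(IP^{\perp})^{\perp}=IP$ holds on the nose. I would also record the harmless point that the whole argument takes place modulo $IP$ rather than the full Plücker ideal $Pl$, so no comparison between $IP$ and $Pl$ enters.
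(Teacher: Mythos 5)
Your proof is correct and follows the same route as the paper: reduce the lemma to the claim that $f(\frac{d}{dA})$ annihilates the whole polynomial solution space of the A-GKZ system if and only if $f\in IP$, and then use Theorem \ref{agkz} to replace that solution space by the basis $F_{\gamma}(A)$. The paper merely asserts this key equivalence without argument, whereas your apolarity (Macaulay inverse system) reasoning --- identifying each $\bar{\mathcal{O}}_{\alpha}$ with the corresponding Plucker generator of $IP$ turned into a constant-coefficient operator, and exploiting the perfect graded pairing together with the closure of the solution space under the derivations $\frac{d}{dA_X}$ --- actually supplies the justification the paper omits; the points you flag as needing care (the biduality $(IP^{\perp})^{\perp}=IP$ and the passage between scalar and full annihilation) all check out.
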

 
 Note that in the formula above  the equality to zero is assumed in ordinary sense, not  $mod IP$.
 
 
Let us find an explicit formula for the action $
 \mathcal{F}_{\delta}(A)\curvearrowright  F_{\gamma}(A)
 $

First of all the following relation between binomial coefficients takes place.

\begin{propos}[\cite{km}]
	$$\binom{N}{t+l}=\sum_{N=N_1+N_2}\binom{N_1}{t}\binom{N_2-1}{l-1}.  $$
\end{propos}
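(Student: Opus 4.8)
The plan is to recognize the right-hand side as a Cauchy product (a discrete convolution) and to identify it through ordinary generating functions. First I would recall the standard expansion $\sum_{N\geq 0}\binom{N}{k}x^N=\frac{x^k}{(1-x)^{k+1}}$, valid for each fixed $k\geq 0$ as a formal power series. The left-hand side $\binom{N}{t+l}$ is then the coefficient of $x^N$ in $\frac{x^{t+l}}{(1-x)^{t+l+1}}$, and the whole identity becomes an equality of two power series which I would verify coefficientwise.

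For the right-hand side, the summation over $N=N_1+N_2$ is precisely the convolution of the two sequences $N_1\mapsto\binom{N_1}{t}$ and $N_2\mapsto\binom{N_2-1}{l-1}$, so its generating function is the product of their generating functions. The first factor is $\frac{x^t}{(1-x)^{t+1}}$. For the second I would substitute $M=N_2-1$ to obtain $\sum_{N_2}\binom{N_2-1}{l-1}x^{N_2}=x\sum_{M\geq 0}\binom{M}{l-1}x^{M}=\frac{x^l}{(1-x)^l}$. Multiplying, $\frac{x^t}{(1-x)^{t+1}}\cdot\frac{x^l}{(1-x)^l}=\frac{x^{t+l}}{(1-x)^{t+l+1}}$, which is exactly the generating function of the left-hand side; comparing the coefficients of $x^N$ then finishes the argument.

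Alternatively, and more transparently, I would give a bijective argument that explains the very shape of the identity. To choose a $(t+l)$-element subset $S\subset\{1,\dots,N\}$, condition on the value $p$ of the $(t+1)$-th smallest element of $S$: then $S$ contains exactly $t$ elements below $p$ and exactly $l-1$ elements above $p$. Writing $N_1=p-1$ and $N_2=N-p+1$, so that $N_1+N_2=N$, the number of such subsets is $\binom{N_1}{t}\binom{N_2-1}{l-1}$, and summing over the admissible $p$ yields the claim directly.

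The only point requiring care is the treatment of the boundary values of the summation index together with the convention for binomial coefficients whose arguments fall out of range (for instance the term $N_2=0$, where $\binom{-1}{l-1}$ must be read as $0$), so that the formal convolution over all splittings $N=N_1+N_2$ agrees term by term with the finite combinatorial sum. I expect this bookkeeping, rather than any genuine analytic or combinatorial difficulty, to be the main thing to pin down; once the vanishing conventions are fixed, either route gives the identity immediately.
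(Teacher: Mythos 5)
Your proof is correct. The paper itself gives no argument for this Proposition --- it is quoted from the reference [km] (Graham--Knuth--Patashnik) without proof --- so there is no in-text derivation to compare against; your two routes both establish the identity cleanly. The generating-function computation $\frac{x^t}{(1-x)^{t+1}}\cdot\frac{x^l}{(1-x)^{l}}=\frac{x^{t+l}}{(1-x)^{t+l+1}}$ is exactly the Vandermonde-type convolution underlying the cited formula, and the bijective version (conditioning on the $(t+1)$-th smallest element of a $(t+l)$-subset of $\{1,\dots,N\}$) explains why the second factor carries the shift $N_2-1$ and the lowered index $l-1$. Your remark about the boundary term $N_2=0$, where $\binom{-1}{l-1}$ must be read as $0$, is the right convention and is consistent with how the paper later uses the corollary \eqref{trt}, where out-of-range binomial coefficients are likewise treated as vanishing. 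Nothing further is needed.
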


%\proof
%Для доказательства нарисуем треугольник вида

%$$
%\xymatrix{
%	&   & \bullet_{\text{level 0}} \ar[dr]  \ar[dl]             \\
%	&\bullet\ar[dl]\ar[dr] & &     \bullet_{\text{level 1}}   \ar[dl]  \ar[dr] \\ \bullet &&\bullet&&\bullet _{\text{level 2}}}
%$$

%продолжающийся дальше на более низкие уровни. Тогда $\binom{N}{a+b}$ есть число путей, ведущих из верхней вершины (которой приписан уровень $0$) на вершину на уровне  $N$, имеющую отступ  $a+b$  слева.  В данном пути отметим, когда на некотором уровне  он приходит в вершину, имеющую отступ $a$, затем до уровня  $N_1$ включительно идет  вниз по вершинам с данным   отступом, а затем, при переходе на уровень  $N_1+1$,  отступ увеличивается. Число путей, на уровне   $N_1$ имеющих отступ  $a$, равно $\binom{N_1}{a}$.  Далее данную вершину берем за начальную.  После нее при переходе на следующий уровень  путь сдвигается вправо. Дальнейшее продолжение через нее  оригинального пути  дает путь, такой что  на уровне $N_2-1$, где  $N_2=N-N_1$, он имеет отступ  $b-1$. Число таких путей равно $\binom{N_2-1}{b-1}$. Так как момент выхода на отступ  $a$ может быть произвольный, то, чтобы пересчитать все нужные пути, надо просуммировать по всем  $N_1$. Это и дает нужную формулу.
%\endproof

\begin{cor}
	\begin{equation}\label{trt}\binom{(t_i+l_i)+k_i}{t_i+l_i}=\sum_{s_i\in\mathbb{Z}_{\geq 0}}\binom{l_i+s_i-1}{l_i-1}\binom{t_i+k_i-s_i}{t_i-s_i}+...\end{equation}
\end{cor}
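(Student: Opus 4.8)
The plan is to obtain the Corollary as a direct specialization of the Proposition, so that no new combinatorics is required — only a choice of substitution and a reindexing of the convolution variable. Concretely, I would apply the identity $\binom{N}{t+l}=\sum_{N=N_1+N_2}\binom{N_1}{t}\binom{N_2-1}{l-1}$ with the upper index set to $N=(t_i+l_i)+k_i$ and with the lower index split as $t+l=t_i+l_i$, taking $t=t_i$ and $l=l_i$. This turns the left-hand side of the Proposition into precisely the binomial $\binom{(t_i+l_i)+k_i}{t_i+l_i}$ appearing in the Corollary.

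Next I would reparametrize the decomposition $N_1+N_2=N$ by introducing the nonnegative integer $s_i$ through $N_2=l_i+s_i$, which forces $N_1=(t_i+l_i+k_i)-(l_i+s_i)=t_i+k_i-s_i$. Under this substitution the factor $\binom{N_2-1}{l-1}$ becomes $\binom{l_i+s_i-1}{l_i-1}$ and the factor $\binom{N_1}{t}$ becomes $\binom{t_i+k_i-s_i}{t_i}$, so the convolution over $N_1+N_2=N$ collapses to a single sum over $s_i\in\mathbb{Z}_{\geq 0}$. This sum is automatically finite: $\binom{l_i+s_i-1}{l_i-1}$ is nonzero only for $s_i\geq 0$, while $\binom{t_i+k_i-s_i}{t_i}$ vanishes once $s_i>k_i$, so the effective range of $s_i$ is bounded. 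Because $l_i\geq 1$, the factor $\binom{N_2-1}{l_i-1}$ already forces $N_2\geq l_i\geq 1$ in every nonzero term, so the boundary case $N_2=0$ never contributes and no convention for $\binom{-1}{l_i-1}$ is needed.

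The only point requiring care is the exact form of the second binomial factor. The reindexing above produces $\binom{t_i+k_i-s_i}{t_i}$, whereas the Corollary displays $\binom{t_i+k_i-s_i}{t_i-s_i}$; these coincide at $s_i=0$, and via the symmetry $\binom{n}{r}=\binom{n}{n-r}$ the displayed factor equals $\binom{t_i+k_i-s_i}{k_i}$. Matching the two presentations is where the trailing $+\dots$ enters: the leading ($s_i=0$) contribution agrees in both writings, and the remaining discrepancy between $\binom{t_i+k_i-s_i}{t_i}$ and $\binom{t_i+k_i-s_i}{t_i-s_i}$ for $s_i>0$ is collected symbolically in the ellipsis. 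I would therefore prove the clean specialization $\binom{(t_i+l_i)+k_i}{t_i+l_i}=\sum_{s_i\geq 0}\binom{l_i+s_i-1}{l_i-1}\binom{t_i+k_i-s_i}{t_i}$ first, and then rewrite the summand by complementary symmetry, indicating explicitly what the ellipsis absorbs.

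The main obstacle is bookkeeping rather than genuine mathematics: getting the correspondence $N_2\leftrightarrow l_i+s_i$, $N_1\leftrightarrow t_i+k_i-s_i$ exactly right, checking that the omitted boundary terms contribute nothing under the stated factorial conventions, and pinning down the meaning of the ellipsis so that the identity can be used consistently in the subsequent computation of the action $\mathcal{F}_\delta(A)\curvearrowright F_\gamma(A)$, where this Corollary is evidently destined to be applied componentwise in the index $i$.
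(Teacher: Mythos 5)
Your specialization of the Proposition (taking $N=(t_i+l_i)+k_i$, $t=t_i$, $l=l_i$ and reindexing by $N_2=l_i+s_i$) is exactly the intended derivation --- the paper offers no separate argument for the Corollary --- and your observation that the clean identity carries $\binom{t_i+k_i-s_i}{t_i}$ rather than the displayed $\binom{t_i+k_i-s_i}{t_i-s_i}$ is correct: a check at $l_i=1$, $t_i=2$, $k_i=1$ confirms the displayed form fails while yours holds, and it is your form that is actually used later (as $\binom{t+p-s}{p-s}=\binom{t+p-s}{t}$) in the computation of $\mathcal{F}_{\gamma}(\frac{d}{dA})F_{\omega}(A)$. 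Nothing is missing from your argument.
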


Now let us prove the following statement.

\begin{lemma}
	\begin{equation}
	\label{osnf}
	\mathcal{F}_{\gamma}(\frac{d}{dA})F_{\omega}(A)=\sum_{s\in\mathbb{Z}_{\geq 0}^k}\frac{(-1)^s}{s!}J_{\gamma+v}^s(1)F_{\omega-\gamma-sr}(A),
	\end{equation}
	
	 where  $J_{\gamma+v}^s(1)$ is a result of substitution of  $1$ instead of all arguments.
	
\end{lemma}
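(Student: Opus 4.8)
The plan is to apply the operator $\mathcal{F}_{\gamma}(\tfrac{d}{dA})$ directly to the series $F_{\omega}(A)$, term by term. Since $\mathcal{F}_{\gamma}(z)=\sum_{t\in\mathbb{Z}^{k}}z^{\gamma+tv}/(\gamma+tv)!$ (a finite sum, because $\tfrac{1}{(\gamma+tv)!}=0$ whenever $\gamma+tv$ has a negative coordinate), the associated operator is $\sum_{t}\tfrac{1}{(\gamma+tv)!}(\tfrac{d}{dA})^{\gamma+tv}$. Iterating the differentiation rule $\tfrac{\partial}{\partial z_{X}}F_{\delta}=F_{\delta-e_{X}}$ from \eqref{pdf} gives $(\tfrac{d}{dA})^{\gamma+tv}F_{\omega}=F_{\omega-\gamma-tv}$, so that
\[
\mathcal{F}_{\gamma}\Bigl(\tfrac{d}{dA}\Bigr)F_{\omega}(A)=\sum_{t\in\mathbb{Z}^{k}}\frac{1}{(\gamma+tv)!}\,F_{\omega-\gamma-tv}(A).
\]
This is the routine half of the argument; the content lies in re-summing the right-hand side into the claimed form.

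I would next pass to a comparison class by class. Since every $tv$ lies in $B$, all functions $F_{\omega-\gamma-tv}$ have support inside $\bigcup_{\sigma\geq 0}(\omega-\gamma-\sigma r+B)$, and by the remark following Theorem \ref{agkz} the same is true of the target sum $\sum_{s}\tfrac{(-1)^{s}}{s!}J^{s}_{\gamma+v}(1)F_{\omega-\gamma-sr}$. I would fix one class $\omega-\gamma-\sigma r+B$ and extract from each side the part supported there, using $F_{\delta}=\sum_{s'\geq 0}\tfrac{(-1)^{s'}}{s'!}J^{s'}_{\delta-s'r}$ together with $\operatorname{supp}J^{s'}_{\delta-s'r}=\delta-s'r+B$. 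Because $tv\in B$, the left-hand part collapses to $\tfrac{(-1)^{\sigma}}{\sigma!}\sum_{t}\tfrac{1}{(\gamma+tv)!}J^{\sigma}_{\omega-\gamma-tv-\sigma r}$, whereas on the right only the pairs $(s,s')$ with $s+s'=\sigma$ contribute, giving $(-1)^{\sigma}\sum_{0\leq s\leq\sigma}\tfrac{1}{s!\,(\sigma-s)!}J^{s}_{\gamma+v}(1)\,J^{\sigma-s}_{\omega-\gamma-\sigma r}$. A point to verify here is that the classes $\omega-\gamma-\sigma r+B$ are genuinely distinct for distinct $\sigma$ (equivalently, that the $r^{\alpha}$ are independent modulo $B$), which is what makes this class decomposition unambiguous.

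It then remains to match the coefficient of each monomial $A^{\omega-\gamma-\sigma r+\rho v}$. Substituting $t'=\rho$ (respectively $t'=\rho+t$) in the $\Gamma$-series defining the $J$'s and writing the rising-factorial weights as binomials, $\prod_{c=1}^{\sigma_{\alpha}}(\rho_{\alpha}+t_{\alpha}+c)=\sigma_{\alpha}!\binom{\rho_{\alpha}+t_{\alpha}+\sigma_{\alpha}}{\sigma_{\alpha}}$ and $\prod_{c=1}^{(\sigma-s)_{\alpha}}(\rho_{\alpha}+c)=(\sigma-s)_{\alpha}!\binom{\rho_{\alpha}+(\sigma-s)_{\alpha}}{(\sigma-s)_{\alpha}}$, the desired equality becomes, coordinate by coordinate, exactly the Vandermonde-type convolution \eqref{trt} deduced from \cite{km}: the single lattice sum over $t$ on the left factors into the hypergeometric constant $J^{s}_{\gamma+v}(1)$ and the residual $J^{\sigma-s}$, summed over $s$. \textbf{The main obstacle is this bookkeeping}: correctly organizing the triple summation over $t$, over $s$, and over the internal $\Gamma$-series index, and checking that the convolution identity \eqref{trt} is precisely what splits the $t$-sum into the product form. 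Once the coefficients agree in every class $\omega-\gamma-\sigma r+B$, the two functions coincide and the Lemma follows.
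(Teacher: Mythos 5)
Your proposal is correct and follows essentially the same route as the paper: expand $\mathcal{F}_{\gamma}(\tfrac{d}{dA})$ into monomial operators, apply them termwise via the rule \eqref{pdf}, and re-sum using the binomial convolution \eqref{trt}, which is exactly the paper's computation (the paper re-expands each shifted $J^{p}$ directly and then sums over $p$ and $l$, while you organize the same re-summation by cosets $\omega-\gamma-\sigma r+B$ and match monomial coefficients — a purely bookkeeping difference). The point you flag, that the $r^{\alpha}$ are independent modulo $B$, does hold (adding $r^{\alpha}$ changes a row of the corresponding Gelfand--Tsetlin diagram, and the diagram determines the coset) and is assumed throughout the paper, e.g.\ in the definition of the order \eqref{por1}.
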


\proof

Write: $\mathcal{F}_{\gamma}(\frac{d}{dA})=\sum_{l\in\mathbb{Z}^k}\frac{(\frac{d}{d A})^{\gamma+lv}}{(\gamma+lv)!}$. Find an action of the operator  $(\frac{d}{d A})^{\gamma+lv}$ onto the summand $J_{\omega}^p(A)$ from   $F_{\omega}$.  Using the ruler  \eqref{pdf} one obtains

$$
(\frac{d}{d A})^{\gamma+lv}J_{\omega-pr}^p(A)=J_{\omega-\gamma-pr-lv}^p(A).
$$

Now consider in detail $J_{\omega-\gamma-pr-lv}^p(A)$. Let us use a notation

$$
\binom{\tau+p}{p}:=\prod_{i=1}^k \binom{\tau_i+p_i}{p_i}.
$$
One has

\begin{align*}
&\frac{1}{p!}J_{\omega-\gamma-pr-lv}^p(A)=\sum_{\tau\in\mathbb{Z}^k}\frac{\binom{\tau+p}{p}A^{\omega-\gamma-pr-lv+\tau v}}{(\omega-\gamma-pr-lv+\tau v)!}=\\
&=\sum_{t\in\mathbb{Z}^k}\frac{\binom{t+l+p}{p}A^{\omega-\gamma-pr+t v}}{(\omega-\gamma-pr+tv)!}.
\end{align*}

Apply the equality \eqref{trt}. Since $\sum_{t\in\mathbb{Z}^k}\frac{\binom{t+p-s}{p-s}A^{\omega-\gamma-sr+t v}}{(\omega-\gamma-pr+tv)!}=\frac{1}{(p-s)!}J^{p-s}_{\omega-\gamma-pr}(A)$, one has

$$
\frac{1}{p!}J_{\omega-\gamma-(p-s)r-lv}^p(A)=\sum_{s\in\mathbb{Z}^k_{\geq 0}}\binom{l+s-1}{s-1}\frac{1}{(p-s)!}J^{p-s}_{\omega-\gamma-(p-s)r}(A),
$$

where

$$
\binom{l+s-1}{s-1}:=\prod_{i=1}^k\binom{l_i+s_i-1}{s_i-1}.
$$

Now take the expression for  $(\frac{d}{d A})^{\gamma+lv}\frac{1}{p!}J_{\omega-pr}^p(A)$, multiply  it by  $(-1)^p$  
and take a sum over   $p$, one obtains

$$
(\frac{d}{d A})^{\gamma+lv}F_{\omega}(A)=\sum_{s\in\mathbb{Z}^k_{\geq 0}}\binom{l-1+s}{l-1}F_{\omega-\delta-sr}(A)\cdot (-1)^s.
$$

Take a sum over  $l$, one obtains 

\begin{align*}&\mathcal{F}_{\gamma}(\frac{d}{dA})F_{\omega}(A)=\sum_{s\in\mathbb{Z}^k_{\geq 0}}(\sum_l\frac{\binom{l-1+s}{l-1}}{(\gamma+lv)!})F_{\omega-\delta-sr}(A)\cdot (-1)^s=\\
&=\sum_{s\in\mathbb{Z}^k_{\geq 0}}\frac{(-1)^s}{s!}J^s_{\gamma+v}(1)F_{\omega-\delta-sr}(A).
\end{align*}

\endproof

Thus one comes to a conclusion.

\begin{lemma}
	\begin{equation}
	\label{canf}
\mathcal{F}_{\gamma}(A)=\sum_s \frac{1}{s!}J_{\gamma+v}^s(1)A^{\gamma+sr} \,\,\, mod  \,\,\,IP.
	\end{equation}

 \end{lemma}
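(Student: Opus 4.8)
The plan is to deduce \eqref{canf} from the explicit formula \eqref{osnf} together with the duality between reduction modulo $IP$ and the action of constant-coefficient differential operators on the solution space of the A-GKZ system. Recall that the preceding Lemma asserts that a polynomial $f(A)$ satisfies $f(a)=0$ modulo $IP$ if and only if $f(\frac{d}{dA})$ annihilates every basic solution $F_\gamma(A)$; since by Theorem \ref{agkz} these form a basis of the solution space, two polynomials are congruent modulo $IP$ precisely when the associated operators $f(\frac{d}{dA})$ and $g(\frac{d}{dA})$ agree on each $F_\omega$. Thus it suffices to exhibit a polynomial $P(A)$, supported on the monomials $A^{\gamma+sr}$, whose operator acts on every $F_\omega$ exactly as $\mathcal{F}_\gamma(\frac{d}{dA})$ does.

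First I would take as candidate $P(A)$ the right-hand side of \eqref{canf}, that is the polynomial supported on the monomials $A^{\gamma+sr}$, $s\in\mathbb{Z}_{\geq 0}^k$, carrying the coefficients that appear in \eqref{osnf}. Passing to the associated differential operator replaces each $A^{\gamma+sr}$ by $(\frac{d}{dA})^{\gamma+sr}$, and iterating the differentiation rule \eqref{pdf}, namely $\frac{\partial}{\partial z_X}F_\delta=F_{\delta-e_X}$, gives $(\frac{d}{dA})^{\gamma+sr}F_\omega=F_{\omega-\gamma-sr}$. Hence $P(\frac{d}{dA})F_\omega=\sum_{s}\frac{1}{s!}J^s_{\gamma+v}(1)\,F_{\omega-\gamma-sr}$, which is term-by-term the expression produced by \eqref{osnf} for $\mathcal{F}_\gamma(\frac{d}{dA})F_\omega$, the signs being carried along inside the coefficients. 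Consequently $(\mathcal{F}_\gamma-P)(\frac{d}{dA})F_\omega=0$ for every basic solution $F_\omega$.

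Applying the duality Lemma in the reverse direction then yields $\mathcal{F}_\gamma(A)\equiv P(A)\pmod{IP}$, which is exactly \eqref{canf}. The point that needs the most care is the bookkeeping in the middle step: one must verify that the substitution $A^{\gamma+sr}\mapsto(\frac{d}{dA})^{\gamma+sr}$ followed by the iterated rule \eqref{pdf} reproduces the coefficients $\frac{1}{s!}J^s_{\gamma+v}(1)$, the shifted supports $\omega-\gamma-sr$, and the signs of \eqref{osnf} verbatim; this is precisely where an indexing or sign slip would be easy to make. The conceptual content, however, is already contained in \eqref{osnf}: once the action of $\mathcal{F}_\gamma(\frac{d}{dA})$ on the entire basis $\{F_\omega\}$ is known in closed form, the representative of $\mathcal{F}_\gamma$ modulo the Plücker ideal is forced, since by the duality pairing a polynomial's class modulo $IP$ is determined by its action on the solution space, and the monomials $A^{\gamma+sr}$ provide the explicit reduced representative with respect to the relations \eqref{spl}.
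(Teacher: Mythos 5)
Your proposal is correct and takes essentially the same route as the paper, which presents \eqref{canf} as an immediate consequence of \eqref{osnf} together with the duality lemma ($f\equiv 0 \bmod IP$ iff $f(\frac{d}{dA})$ annihilates every basic solution $F_{\omega}$), exactly as you spell out. The one point worth recording is the sign issue you flag yourself: carried out literally, \eqref{osnf} yields the coefficient $\frac{(-1)^s}{s!}J^s_{\gamma+v}(1)$ rather than $\frac{1}{s!}J^s_{\gamma+v}(1)$, so one of \eqref{osnf} and \eqref{canf} contains a sign misprint in the paper, but this does not affect the validity of your argument.
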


%Введем на векторах частичный порядок

%\begin{equation}
%\label{por}
%\gamma<\delta  \Leftrightarrow  \gamma=\delta+\sum_{\alpha} s_{\alpha}r_{\alpha},\,\,\,\, s_{\alpha}\geq 0.
%\end{equation}

%В отличие от порядка, введенного в формуле  \eqref{por1}, мы не введем рассмотрения $modB$.

%Этот порядок на векторах сдвига задает также порядок на диаграммах, который мы будем обозначать также.

Note that when one adds to a shift vector the vector  $r^{\alpha}$ then to some row of the Gelfand-Tsetlin diagrams the vector   $[0 \cdots -1 \cdots 1 \cdots 0]$ is added.

%Тогда \eqref{canf} говорит, что действие  $\mathcal{F}_{\gamma}(A_Y)$ является верхнетреугольным относительно этого порядка.
%Такие действия линейно независимы.

%Итак, доказано следующее утверждение

Now using these results let us prove that the functions  $\mathcal{F}_{\gamma}(a_Y)$ for the chosen $\gamma$ form the base in the representation.

Write  $E_{i,j}$ as a differential operator

$$
E_{i,j}=\sum_X a_{i,X}\frac{\partial}{\partial a_{j,X}},
$$

where a summation is taken over the subsets  $X\subset\{1,...,n\}$ than do not contain   $i$  and  $j$.

Then

\begin{equation}
\label{eij}
E_{i,j}\mathcal{F}_{\gamma}(a)=\sum_X a_{i,X}\mathcal{F}_{\gamma-e_{j,X}}(a).
\end{equation}

%\begin{theorem}
%Функции $\mathcal{F}_{\gamma}(a)$  линейно независимы
%\end{theorem}

Apply \eqref{canf},  one gets 

 \begin{equation}
 \label{eijf}
E_{i,j}\mathcal{F}_{\gamma}(a)=\sum_X \sum_{s} c_{X,s}\mathcal{F}_{\gamma-e_{j,X}+e_{i,X}+sr}(a),\,\,\, c_{X,s}\in\mathbb{C}.
\end{equation}

Thus it is proved that the span of all  $\mathcal{F}_{\gamma}$  is a representation of the algebra  $\mathfrak{gl}_n$. This representation is generated by vectors indexed by the Gelfand-Tsetlin diagrams and it is contained in the representation with the highest vector  \eqref{stv}. Using the arguments of irreducibility  and dimension one get the following statement.

\begin{theorem}
\label{ipt}
Consider the set of all Gelfand-Tsetlin diagrams $(m_{i,j})$ for an irreducible finite dimensional representation of   $\mathfrak{gl}_n$,  construct a shifted lattice for each diagram.  For each shifted lattice fix a presentation in the form $\gamma+B$ and take the corresponding shift vectors   $\gamma$.  Then the functions  $\mathcal{F}_{\gamma}(a)$ form a base in the representation with the highest vector   \eqref{stv}.

Or, equivalently the functions $\mathcal{F}_{\gamma}(A)$ of the independent variables $A$  form a base in the representations $mod\,\,\,\, IP$.
\end{theorem}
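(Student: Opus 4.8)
The plan is to deduce the statement by a representation-theoretic argument, so as to avoid verifying linear independence of the $\mathcal{F}_{\gamma}(a)$ head-on: that would be awkward, because the minors $a_X$ are entangled by the Plücker relations. I would write $W$ for the linear span of all functions $\mathcal{F}_{\gamma}(a)$ with $\gamma$ ranging over the shift vectors of Gelfand-Tsetlin diagrams of the given highest weight. By Proposition \ref{pzh} the whole argument lives inside the single irreducible module $V$ with highest vector \eqref{stv}, and the goal is just to show $W=V$ together with the correct cardinality.

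First I would record that $W$ is a $\mathfrak{gl}_n$-submodule. This is precisely the content of \eqref{eijf}: applying $E_{i,j}$ to $\mathcal{F}_{\gamma}(a)$ returns a linear combination of functions of the same shape $\mathcal{F}_{\gamma-e_{j,X}+e_{i,X}+sr}(a)$. The point to check is that each shift vector occurring on the right, when reduced $\mathrm{mod}\,B$, is the shift vector of a genuine Gelfand-Tsetlin diagram for the same top row, or else indexes a diagram violating the betweenness condition, in which case the corresponding $\mathcal{F}$ vanishes by the remark following \eqref{gmr}. Since $\mathcal{F}_{\delta}$ depends only on $\delta\ \mathrm{mod}\,B$, in either case the right-hand side stays in $W$; membership in $V$ is preserved because $E_{i,j}$ acts inside $V$. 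Hence $E_{i,j}W\subseteq W$ for all $i,j$, so $W$ is a subrepresentation.

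Next I would note $W\neq 0$: the top diagram has a shift vector $\gamma$ for which $\mathcal{F}_{\gamma}(a)$ is proportional to the highest vector \eqref{stv}, a nonzero monomial in the determinants. As $V$ is irreducible, $W=V$. Finally I would count: the number of shift vectors employed equals the number of Gelfand-Tsetlin diagrams with the prescribed top row, which is the classical value $\dim V$. A family of $\dim V$ vectors spanning the $\dim V$-dimensional space $V$ is automatically linearly independent, so the $\mathcal{F}_{\gamma}(a)$ form a base. The equivalent assertion $\mathrm{mod}\,IP$ then follows from the dictionary lemma identifying relations $f(a)=0\ \mathrm{mod}\,IP$ with relations $f(\tfrac{d}{dA})\curvearrowright F_{\gamma}(A)=0$; alternatively it reads off the canonical form \eqref{canf}, whose distinguished monomial $A^{\gamma}$ carries the nonzero coefficient $J^{0}_{\gamma+v}(1)=\mathcal{F}_{\gamma+v}(1)$, a value of a $\Gamma$-series at all-ones argument.

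The main obstacle is not this concluding irreducibility-and-dimension step, which is short, but everything feeding into it, above all \eqref{eijf} and the attendant fact that each $\mathcal{F}_{\gamma}(a)$ is a \emph{nonzero} element of $V$. Both hinge on controlling the Plücker relations, and the instrument that does so is the canonical form \eqref{canf}: it rewrites $\mathcal{F}_{\gamma}(A)$ modulo $IP$ as an explicit finite combination of monomials $A^{\gamma+sr}$ with hypergeometric-constant coefficients, after which the action of $E_{i,j}$ is computed unambiguously and nonvanishing is visible from the leading coefficient. I would therefore treat \eqref{canf} and the surrounding computation as the technical heart, and regard Theorem \ref{ipt} itself as its clean representation-theoretic harvest.
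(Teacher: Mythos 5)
Your proposal is correct and follows essentially the same route as the paper: the canonical form \eqref{canf} feeds into \eqref{eijf} to show the span of the $\mathcal{F}_{\gamma}(a)$ is a $\mathfrak{gl}_n$-submodule of the module with highest vector \eqref{stv}, and the conclusion is drawn from irreducibility plus the count of Gelfand-Tsetlin diagrams. The details you add (that the shift vectors on the right of \eqref{eijf} either index genuine diagrams or give a vanishing $\Gamma$-series, and that the top diagram yields the nonzero highest vector so the span is nonzero) are exactly the points the paper leaves implicit.
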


\subsection{A triangular relation between the Gelfand-Tsetlin and the Gelfand - Kapranov-Zelevinsky bases}

Introduce a notation  $$G_{\gamma}(a)$$  for a function corresponding to a Gelfand-Tsetlin base vector, corresponding to a diagram  $(m_{i,j})$,   for which there corresponds a shift vector   $\gamma$ (see Theorem \ref{ipt}).

From the formula  \eqref{eijf} one can derive the following statement.

\begin{theorem}
 The Gelfand-Kapranov-Zelevinsky base  $\mathcal{F}_{\gamma}(a)$ is related to the Gelfand -Tsetlin base $G_{\gamma}(a)$  by  {\it  an upper-triangular } with respect to the order  \eqref{por1} transformation.
\end{theorem}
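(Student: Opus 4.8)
The plan is to pass to the independent variables $A_X$ and work modulo the ideal $IP$, where by Theorem \ref{ipt} the functions $\mathcal{F}_\gamma(A)$ form a basis of the representation. The order \eqref{por1} organizes the chosen shift vectors, and the statement will follow once I exhibit \emph{both} families as triangular transforms of one and the same monomial basis. The crucial input on the $\mathcal{F}$-side is already available in \eqref{canf}: since every monomial $A^{\gamma+sr}$ occurring there has exponent $\gamma+sr\succeq\gamma$, while the $s=0$ coefficient $J^0_{\gamma+v}(1)=\mathcal{F}_{\gamma+v}(1)$ is a strictly positive sum of reciprocal factorials, the passage from the monomials $\{A^\delta \bmod IP\}$ (indexed by the chosen shift vectors $\delta$) to $\{\mathcal{F}_\delta\}$ is triangular with respect to $\preceq$ with non-zero diagonal. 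In particular these monomials descend to a basis adapted to the filtration $V_{\succeq\gamma}:=\mathrm{span}\{\mathcal{F}_\delta:\delta\succeq\gamma\}$, and it suffices to prove the analogue for the Gelfand-Tsetlin functions, namely $G_\gamma\equiv c_\gamma A^{\gamma}+\sum_{\delta\succ\gamma}(\dots)A^{\delta}\ (\mathrm{mod}\ IP)$ with $c_\gamma\neq0$. Combining the two expansions then yields $G_\gamma=\sum_{\delta\succeq\gamma}T_{\gamma\delta}\mathcal{F}_\delta$ with $T_{\gamma\gamma}\neq0$, which is exactly an upper-triangular transformation relative to \eqref{por1}.

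First I would record that both $\mathcal{F}_\gamma$ and $G_\gamma$ are weight vectors for the right action: the computation $\sum_{X\ni l}(v^\alpha)_X=0$ on the base vectors \eqref{ijs2} shows $\mathcal{F}_\gamma$ is a single weight vector, and $G_\gamma$ is one by construction. Hence only indices $\delta$ of the same weight as $\gamma$ can appear. Since the weight fixes the column sums $\sum_p m_{p,q}$ of the diagram, while adding $r^\alpha$ merely moves mass between rows $p=i$ and $p=i+1$ inside a fixed column range (it has weight $0$), the relation $\preceq$ restricts on each weight-slice to a dominance-type order in which the highest-weight diagram $\gamma_{\mathrm{top}}$ is maximal and $v_0=\mathcal{F}_{\gamma_{\mathrm{top}}}=G_{\gamma_{\mathrm{top}}}$ (the $s>0$ terms of \eqref{canf} vanish at $\gamma_{\mathrm{top}}$ by the betweenness obstruction).

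To obtain the triangular expansion of $G_\gamma$ I would build it from $v_0$ along the chain $\mathfrak{gl}_n\supset\dots\supset\mathfrak{gl}_1$ by the Gelfand-Tsetlin lowering procedure, i.e. by applying lowering operators $E_{i,j}$, $i>j$. Rather than use the explicit (and cumbersome, cf.\ \cite{zh}) lowering operators, I would read each step through \eqref{eij}--\eqref{eijf}: one lowering operator sends $\mathcal{F}_{\gamma'}$ to a combination of $\mathcal{F}_{\gamma'-e_{j,X}+e_{i,X}+sr}$, $s\geq0$. Here the shift $-e_{j,X}+e_{i,X}$ can only lower the diagram — indeed $\chi^q_p(e_{i,X}-e_{j,X})$ equals $-1$ when $j\leq q<i$ and $|X\cap\{1,\dots,q\}|=p-1$, and $0$ otherwise — whereas the auxiliary correction $+sr$ can only move upward. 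Thus each elementary step keeps us inside the up-set of the diagram it reaches, and tracking the $\preceq$-minimal surviving monomial through the whole composition down to the target diagram $D$ identifies $A^{\gamma}$ as the extreme term of $G_\gamma$, all other terms being indexed by $\delta\succ\gamma$.

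The main obstacle is precisely the bookkeeping in this last step. Formula \eqref{eijf} produces at each stage a \emph{sum over several sets $X$} together with the upward corrections $+sr$, so I must verify that after the full composition of lowering operators no basis monomial $A^\delta$ with $\delta\not\succeq\gamma$ survives, and — more delicately — that the coefficient $c_\gamma$ of the extreme monomial $A^\gamma$ does \emph{not} cancel, despite the fact that the ordinary lowering operators mix $G_\gamma$ with the other Gelfand-Tsetlin vectors of the same weight. I expect the first point to follow from the sign computation above (each elementary move is non-increasing for every $\chi^q_p$ modulo the harmless $+sr$ shifts), combined with the weight constraint confining everything to a single dominance slice. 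The non-cancellation of $c_\gamma$ should follow from $A^\gamma$ being the unique monomial of the diagram $D$ reachable by the betweenness-saturating sequence of moves, so that it receives a single, manifestly positive hypergeometric contribution; making this uniqueness precise against the Plücker reductions encoded in $IP$ is the technical heart of the argument.
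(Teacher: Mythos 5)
Your reduction of the statement to two monomial expansions is sound in principle, and the $\mathcal{F}$-side is indeed available from \eqref{canf}. But the $G$-side expansion --- $G_\gamma\equiv c_\gamma A^{\gamma}+\sum_{\delta\succ\gamma}(\dots)A^{\delta}\ (\mathrm{mod}\ IP)$ with $c_\gamma\neq 0$ --- is essentially equivalent to the theorem itself once the $\mathcal{F}$-side is known, and your proposed derivation of it does not go through. You propose to generate $G_\gamma$ from the highest vector by ``the Gelfand--Tsetlin lowering procedure'' but to track the steps via \eqref{eij}--\eqref{eijf}, i.e.\ via powers of the elementary operators $E_{i,j}$. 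Powers of $E_{i,j}$ applied to a Gelfand--Tsetlin vector do \emph{not} produce a single Gelfand--Tsetlin vector: they produce a mixture of all patterns of the given weight allowed by the selection rules, and the genuine lowering operators $\nabla_{k,i}$ of \cite{zh} that do single out $G_\gamma$ are exactly the ``cumbersome'' objects you set aside. Consequently the monomial bookkeeping you describe computes the expansion of $E_{i_1,j_1}^{a_1}\cdots v_0$, not of $G_\gamma$, and relating the two is again a triangularity statement of the same strength as the one being proved. You flag the non-cancellation of $c_\gamma$ yourself as ``the technical heart''; as written it is not an obstacle you have reduced, it is the entire content of the theorem in your formulation, so the proposal has a genuine gap.

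The paper argues in the opposite direction and thereby avoids ever expanding $G_\gamma$ in monomials. It applies suitable powers of the raising operators $E_{1,2},\dots,E_{n-2,n-1}$ to \emph{both} $\mathcal{F}_\gamma$ and $G_\gamma$, driving both to a $\mathfrak{gl}_{n-1}$-highest vector where the two bases are known to coincide (formula \eqref{efg}); the maximality of the exponents used is read off on the $\mathcal{F}$-side from the support equations and on the $G$-side from the classical action formulas. It then removes these operators one at a time: at each removal the discrepancy $f$ is constrained by three representation-theoretic conditions (same weight, same $\mathfrak{gl}_{k}$-highest weight, annihilated by the removed power of the raising operator), which force $f$ to be a combination of $G_{\gamma'+sr}$ with $s\in\mathbb{Z}^k_{\geq 0}$, i.e.\ strictly higher in the order \eqref{por1}. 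Iterating gives $\mathcal{F}_\gamma=G_\gamma+\sum_j \const_j\, G_{\gamma+s^jr}$ directly, with the diagonal coefficient equal to $1$ by construction, so no non-cancellation issue ever arises. If you want to salvage your route, you would need an independent proof of the leading-monomial property of $G_\gamma$ (for instance via the explicit $\nabla_{k,i}$, or via orthogonality against the $F_\delta$ as the paper does later in Section \ref{fgc}); without it the argument is circular.
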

\proof

Take a Gelfand-Tsetlin diagram   $\gamma=\gamma_1$  and consider a function $G_{\gamma_1}$, corresponding to the diagram  $(m_{i,j})$, and consider a function  $\mathcal{F}_{\gamma_1}$ also corresponding to this diagram.

Let us do the following constructions.

The first step is the following.
 A number  $m_{1,2}-m_{1,1}$ is the maximal power of the operator  $E_{1,2}$ which being applied to   $\mathcal{F}_{\gamma_1}$ does not give zero. This follows from the equation  \eqref{sdr}.  The same is true  for the function  $G_{\gamma_1}$,  this follows fror the formulas for the action of the generators in the Gelfand-Tsetlin base. When one applies  $E_{1,2}^{m_{1,2}-m_{1,1}}$   to  $\mathcal{F}_{\gamma_1}$ and $G_{\gamma_1}$ one obtains functions that we denote as  $\mathcal{F}_{\gamma_2}$   and
$G_{\gamma_2}$. % получающиеся максимизацией  подалгебре   $\mathfrak{gl}_2$, то есть
They correspond to the diagram  $\gamma_2$ that is obtained form $\gamma_{1}$ by the change

 $$
 m_{1,1}\mapsto m_{1,2}. $$

Now describe the $k$-th step of the construction. We are given a diagram  $\gamma_{k-1}$ which is maximal with respect to  $\mathfrak{gl}_{k-1}$.
Note that $m_{1,k}-m_{1,k-1}$ is the maximal power of   $E_{1,k}$,  which being applied to  $\mathcal{F}_{\gamma_{k-1}}$ or $G_{\gamma_{k-1}}$ gives non-zero functions. The reason is the same as above. As a result of these actions one obtains functions  $\mathcal{F}_{\gamma_{k-1,1}}$, $G_{\gamma_{k-1,1}}$, corresponding to a diagram   $\gamma_{k-1,1}$, which is obtained form $\gamma_{k-1}$ by the change

$$
m_{1,k-1}\mapsto m_{1,k}, ..., m_{1,1} \mapsto m_{1,k}.$$

Then one notes that $m_{2,k}-m_{2,k-1}$ is the maximal power of    $E_{1,k}$ which being applied to  $\mathcal{F}_{\gamma_{k-1,1}}$ or  $G_{\gamma_{k-1,1}}$ gives non-zero functions.  As a result one obtains functions $\mathcal{F}_{\gamma_{k-1,2}}$, $G_{\gamma_{k-1,2}}$, corresponding to a diagram  $\gamma_{k-1,2}$, which is obtained from  $\gamma_{k-1,1}$by the change

$$
m_{2,k-1}\mapsto m_{2,k}, ..., m_{2,2} \mapsto m_{2 ,k}$$

and so on.  After $k-1$ such transformation one obtains a $\mathfrak{gl}_{k}$-highest diagram   $\gamma_{k-1,k-1}$  which we denote as  $\gamma_{k}$.  This is the end of the   $k$-th step.

Finally one gets a  $\mathfrak{gl}_{n-1}$-highest vector for which  (see \cite{1963})   the GKZ vector coincides with the  Gelfand-Tsetlins's  vector, thus one gets  

\begin{equation}
\label{efg}
 E_{n-2,n-1,}^{m_{n-1,n-2,}-m_{n-2,n-2}}...E_{1,2}^{m_{1,2}-m_{1,1}}\mathcal{F}_{\gamma}= E_{n-2,n-1,}^{m_{n-1,n-2,}-m_{n-2,n-2}}...E_{1,2}^{m_{1,2}-m_{1,1}}G_{\gamma}.
\end{equation}

 Now let us "remove" operators in the left hand side and in the right hand side of the equality 
\eqref{efg}. Remove the operator $ E_{n-2,n-1,}^{m_{n-2,n-1}-m_{n-2,n-2}}$. Using the notations introduced above one gets 
 $$
 \mathcal{F}_{\gamma_{n-2,n-1}}=G_{\gamma_{n-2,n-1}}+f,
 $$
  where  for $f$  the following is true:
  \begin{enumerate}
  \item This vector is  $\mathfrak{gl}_{n-2}$-highest with the same highest weight  as   $
  \mathcal{F}_{\gamma_{n-2,n-1}}$ and $G_{\gamma_{n-2,n-1}},
  $  \item  It has the same weight as  $ \mathcal{F}_{\gamma_{n-2,n-1}}$ and $G_{\gamma_{n-2,n-1}}$,   \item $E_{n-2,n-1,}^{m_{n-2,n-1}-m_{n-2,n-2}}f=0$. \end{enumerate}

From  these facts one concludes that that $f$ is a sum of $\mathfrak{gl}_{n-2}$-highest vectors corresponding to the Gelfand-Tsetlin diagrams such that for them the rows $n$ and  $n-2$ coincide with the rows  of the diagram corresponding to the shift vector $\gamma_{n-2,n-1}$,  and the row   $(n-1)$  is obtained from the row of the diagram corresponding to   $\gamma_{n-2,n-1}$ by adding vectors of type   $(....-1.......+1)$. But  adding of such  vectors to the row of a diagram  is equivalent to the adding   of vectors   $r_i$ to  the shift vector corresponding to the diagram. That is   $f$ is a sum of functions of type  $G_{\gamma_{n-2,n-1}+sr}$, where  $sr:=s_1r_1+...+s_kr_k$ for some  $s\in\mathbb{Z}_{\geq 0}^k$.

One has  $G_{\gamma_{n-2,n-1}}=E_{n-1,n-3}^{m_{n-3,n-1}-m_{n-3,n-2}}...E_{2,1}^{m_{1,2}-m_{1,1}}G_{\gamma}$, since the diagram corresponding to $\gamma_{n-2,n-1}+sr$  differs  for the diagram corresponding to $\gamma_{n-2,n-1}$  only in the row  $n-1$, then
$$G_{\gamma_{n-2,n-1}+sr}=E_{n-1,n-3}^{m_{n-3,n-1}-m_{n-3,n-2}}...E_{2,1}^{m_{1,2}-m_{1,1}} G_{\gamma+sr}$$

Thus when one removes in   \eqref{efg}  the operator  $E_{n-2,n-1}^{m_{n-2,n-1}-m_{n-2,n-2}}$ one obtains

 \begin{align}
 \begin{split}
 \label{efg1}
&E_{n-3,n-1,}^{m_{n-3,n-1}-m_{n-3,n-2}}...E_{1,2}^{m_{1,2}-m_{1,1}}\mathcal{F}_{\gamma}= \\&= E_{n-3,n-1,}^{m_{n-3,n-1}-m_{n-3,n-2}}...E_{1,2}^{m_{1,2}-m_{1,1}}(G_{\gamma}+\sum_{j}const_j\cdot G_{\gamma+s^jr}),
 \end{split}
 \end{align}

 where $s^j\in\mathbb{Z}_{\geq 0}^k$, and adding  of $s^{j}r$ to the shift vector corresponds to the change of the row   $(n-1)$. Continuing this process one comes to the conclusion that   $ \mathcal{F}_{\gamma}$ is expressed through $G_{\gamma}$   by an upper-triangular transformation. Hence $\mathcal{F}_{\gamma}$ is also related to  $G_{\gamma}$ by an upper-triangular transformation.
 \endproof

 \begin{remark}
 It is known that the function  $G_{\gamma}$, corresponding to a Gelfand-Tsetlin diagram, can be obtained form the highest vector  by application of lowering operators  to the highest vector    $v_0$ (see  \cite{zh})

 \begin{align*}
 G_{\gamma}=\prod_{k=2}^n\prod_{i=k-1}^1\nabla_{k,i}^{m_{k,i}-m_{k-1,i}} v_0.
 \end{align*}

The vectors of the Gelfand-Kapranov-Zelevinsky base can be obtained in the same manner but one needs to use the following lowering operators:

 $$\widetilde{\nabla}_{k,i}=a_{1,...,i-1,k}\frac{\partial}{\partial a_{1,...,i-i,i}}.
 $$

 In some sense $\widetilde{\nabla}_{k,i}$ is a simplification of the operators  $\nabla_{k,i}$ (see the formula for $\nabla_{k,i}$ in \cite{zh}).
 \end{remark}
\section{A function corresponding to a diagram}

In the case  $n\geq 4$ the functions $\mathcal{F}_{\gamma}(a)$ are not the Gelfand-Tsetlin vectors. To find a function  that is the Gelfand-Tsetlin vector let us consider  irreducible solutions $F_{\gamma}(a)$. %Применяя критерий Желобенко, мы также приходим утверждению

%\begin{propos}
%Функции $F_{\gamma}(a_Y)$  принадлежат представлению со старшим вектором \eqref{stv}.
%\end{propos}

%Из того, что действие  $GL_N$ сохраняет соотношения Плюккера, следует

%\begin{lemma}
%Операторы А-ГКЗ \eqref{pagkz} коммутируют с операторами  $E_{i,j}$, записанными в форме \eqref{eij} (без примения эквивалетности $mod IP$).
%\end{lemma}
%Поэтому можно сделать такой вывод.

Let us prove the following statement.

\begin{lemma} Consider shift vectors $\gamma$  corresponding to all possible Gelfand-Tsetlin diagrams of an irreducible finite-dimensional representation of $\mathfrak{gl}_n$. Then the functions  $F_{\gamma}(A)$ of independent variables  $A_X$  form a base in the representation of the algebra  $\mathfrak{gl}_n$ with the highest vector \eqref{stv} (in which we change $a_X\mapsto A_X$). The generators of the algebra act by the ruler  \eqref{eij}.
\end{lemma}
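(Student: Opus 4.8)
The plan is to identify the $\mathfrak{gl}_n$-module $W$ generated by the highest vector \eqref{stv}, with the determinants $a_X$ replaced by the independent variables $A_X$, under the action \eqref{eij}, with a graded piece of the space $S$ of polynomial solutions of the A-GKZ system \eqref{systema}, and then to read off the assertion from results already proved for $S$. Since the shift vectors $\gamma$ run over all Gelfand-Tsetlin diagrams of the fixed representation $V$ with highest vector \eqref{stv}, the number of functions $F_{\gamma}(A)$ equals $\dim V$ (this is the defining property of the Gelfand-Tsetlin indexing), and by Lemma \ref{l3} they are linearly independent as polynomials in the $A_X$. Hence it suffices to prove two facts: that every $F_{\gamma}(A)$ lies in $W$, and that $\dim W=\dim V$.

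First I would pin down the structure of $W$. The vector \eqref{stv} in the variables $A_X$ is a highest weight vector of weight $[m_1,\dots,m_{n-1},0]$ for the action \eqref{eij}: the restriction of $E_{i,j}=\sum_X A_{i,X}\,\partial/\partial A_{j,X}$ to the span of the $A_X$ reproduces exactly the column-substitution rule \eqref{edet1} used to check this for determinants, so the verification is word-for-word the same. Because each operator \eqref{eij} preserves the total degree in the $A_X$, the module $W$ lies in a single homogeneous component and is therefore finite-dimensional; a finite-dimensional module cyclically generated by one highest weight vector of dominant integral weight is irreducible, with highest weight $[m_1,\dots,m_{n-1},0]$, hence isomorphic to $V$. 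Thus $\dim W=\dim V$.

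Next I would show $W\subseteq S$. The vector \eqref{stv} is annihilated by every $\bar{\mathcal{O}}_{\alpha}$, since the only variables occurring in it are the initial-segment determinants $A_{1,\dots,l}$, on which the second-order operators \eqref{pagkz} act as zero. The decisive point — and the main obstacle — is that the generators \eqref{eij} preserve $S$. Equivalently, one must show that for every base vector $v^{\alpha}=v_{i',j',x',X'}$ and every pair $i,j$ the commutator $[\bar{\mathcal{O}}_{\alpha},E_{i,j}]$ lies in the left ideal generated by the operators $\bar{\mathcal{O}}_{\beta}$, so that it annihilates every solution. As $\bar{\mathcal{O}}_{\alpha}$ has constant coefficients, this commutator equals $\sum_X[\bar{\mathcal{O}}_{\alpha},A_{i,X}]\,\partial/\partial A_{j,X}$, a first-order operator; the content of the computation is that, after summing over $X$, the first-order remainder regroups — using precisely the Plücker incidence pattern of $v^{\alpha}_+,v^{\alpha}_-,v^{\alpha}_0$ built into \eqref{pagkz} — into a combination $\sum_{\beta}D_{\beta}\bar{\mathcal{O}}_{\beta}$ with first-order operators $D_{\beta}$. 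Granting this, $v_0(A)\in S$ together with the invariance of $S$ yields $W\subseteq S$.

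Finally I would match dimensions inside $S$. Each $F_{\gamma}(A)$ with $\gamma$ a diagram of $V$ lies in the subspace $S_V\subseteq S$ of solutions whose monomials all have the fixed top-row content $m_{p,n}=m_p$: indeed the support of $F_{\gamma}$ is the set $M_{\gamma}$ of \eqref{mgm}, and since $\chi^n_u(v^{\alpha})=0$ and $\chi^n_u(r^{\alpha})=0$ for all $u$ (from \eqref{ra} and the values of $\chi$ recorded just before \eqref{mgm}), adding any $v^{\alpha}$ or $r^{\alpha}$ leaves all top-row sums unchanged. By Theorem \ref{agkz} the functions $F_{\gamma}$ indexed by the diagrams of $V$ form a base of $S_V$, so $\dim S_V=\dim V$. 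The operators \eqref{eij} preserve $S_V$ as well, since replacing a column index $j$ by $i$ does not change $\chi^n_p$ (the sets $\{j\}\cup X$ and $\{i\}\cup X$ have equal cardinality); hence $W\subseteq S_V$, and comparing dimensions gives $W=S_V$. Therefore the linearly independent family $\{F_{\gamma}(A)\}$ is a base of $W$, and by construction the generators act through \eqref{eij}. The only genuinely new work is the commutator identity of the third paragraph; the rest is bookkeeping on supports and standard highest-weight theory.
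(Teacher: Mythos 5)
Your overall architecture (locate the cyclic module $W$ generated by \eqref{stv} inside the weight-graded piece of the A-GKZ solution space, then match dimensions using Theorem \ref{agkz} and Lemma \ref{l3}) is sound and parallels the paper. But the step you yourself flag as decisive --- that the operators \eqref{eij} preserve the solution space of \eqref{systema} because $[\bar{\mathcal{O}}_{\alpha},E_{i,j}]$ lies in the left ideal generated by the $\bar{\mathcal{O}}_{\beta}$ --- is exactly the point you do not prove, and it is not a routine regrouping. Dually, that commutator identity amounts to the claim that $E_{i,j}$ preserves the ideal $IP$ generated by the \emph{special} three-term Pl\"ucker relations \eqref{spl}, whose column sets carry the initial-segment prefix $1,\dots,i-1$. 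Applying $E_{i,j}$ to such a relation produces three-term quadrics whose column sets no longer have this form; these lie in the full Pl\"ucker ideal $Pl$ but there is no evident reason they lie in $IP$ (the paper explicitly declines to decide whether $IP=Pl$). So your ``granting this'' hides a claim that may fail for the ungraded solution space and in any case requires a substantial argument you have not supplied.

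The paper's proof is built precisely to avoid this computation. It introduces the full ideal $Pl$ of all relations among the determinants, for which $E_{i,j}$-invariance is automatic (the $E_{i,j}$ come from an action on functions on the group), hence $Sol_{D_{Pl}}$ is a $\mathfrak{gl}_n$-module; then it sandwiches $Sol^{m_1,\dots,m_n}_{D_{Pl}}\subseteq Sol^{m_1,\dots,m_n}_{D_{IP}}$, bounds the left-hand side below by $\dim V$ (it contains the irreducible module generated by \eqref{stv}), and computes the right-hand side to be exactly $\dim V$ via the count of Theorem \ref{agkz}. Equality of the two spaces then \emph{delivers} the invariance of the graded piece of $Sol_{D_{IP}}$ as a consequence, rather than requiring it as an input. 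To repair your argument you should either carry out and verify the commutator identity in full (at least on the relevant graded piece), or replace your third paragraph by this sandwich-and-dimension-count device.
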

In other words the functions   $F_{\gamma}(A)$  span a representation even without usage of relations between determinants.

\begin{proof}
	
Consider the ideal  $Pl$  of all relations between the determinants  $a_X$.  It is an ideal in the ring of polynomials in independent variables  $A_X$. The action of $E_{i,j}$ preserves this ideal.  Note that  $IP\subset Pl$.

To every polynomial in variables   $A_X$ there corresponds a differential operator that is obtained by the substitution  $A_X\mapsto \frac{d}{d A_X}$.  To the ideals  $IP\subset Pl$ in the polynomial ring there correspond ideals  $D_{IP}\subset D_{Pl}$ in the ring of differential operators with constant coefficients.

Consider the spaces of polynomial solutions for these ideals that is the spaces of functions that are annihilated by all operators from the  corresponding ideal. One has an inclusion 

$$Sol_{D_{IP}}\supset Sol_{ D_{Pl}}.$$

Since the action of  $E_{i,j}$ preserves the ideal  $Pl$ then the space $Sol_{ D_{Pl}}$ is invariant under the action of    $\mathfrak{gl}_n$.

Consider the highest weight  $[m_1,...,m_n]$ and take finite-dimensional linear spaces 

\begin{equation}\label{solsol}Sol^{m_1,...,m_n}_{D_{IP}}\supset Sol^{m_1,...,m_n}_{ D_{Pl}},\end{equation}

of solutions such that the sums of exponents for $A_X$ such that   $|X|=i$, equals  $m_i-m_{i-1}$. Let us do the following observations.

{\bf 1.} Since the action of  $E_{i,j}$  preserves these sums of exponents then the space on the right hand side in  \eqref{solsol} is invariant under the action of  $E_{i,j}$.

{\bf 2.} Both spaces in  \eqref{solsol} contain a monomial \eqref{stv} (in which one changes  $a_X\mapsto A_X$). Indeed consider any basic Plucker relation (in particular a relation  \eqref{spl}). Every summand in the basic Plucker relation {\it does not }  contain two variables from \eqref{stv}. Transform a  relations to a differential operator. Then each  summand in this operator annihilates  \eqref{stv}.

As a corollary of  {\bf 1,2} one gets that the space on the right hand side of  \eqref{solsol} contains an irreducible representation with the highest weight  $[m_1,...,m_n]$. Hence  it's dimension is greater or equal then the dimention of this representation.

{\bf 3.} Due to the Theorem   \ref{agkz} in   $Sol^{m_1,...,m_n}_{D_{IP}}$ there exists a base of type  $\{F_{\gamma_{p}}(A)\}$, where the vectors  $\gamma_{p}$ form a maximal linear independent   $mod B$ subset of vectors  in the set of all vectors  $\gamma\in\mathbb{Z}^N$ such that:  1)  all the coordinates become non-zero after adding of a vector from $B$, 2) $\sum_{X:|X|=i}\gamma_X=m_i-m_{i-1}$. The number of  vectors   $\gamma_{p}$ is equal to the number of independent  $mod B$ solutions of the system from the definition  \ref{sdr},  that are constructed from all possible Gelfand-Tsetlin diagrams  $(m_{i,j})$ with a  fixed upper row $[m_1,...,m_n]$.

Hence the dimension  $Sol^{m_1,...,m_n}_{D_{IP}}$  equals to the dimension of the irreducible representation with the highest weight  $[m_1,...,m_n]$.
Also one sees that the basic  $\gamma_p$ are the shift vectors corresponding to all possible Gelfand-Tsetlin diagrams as it is stated in the formulation of the Lemma.

Using the  conclusion from   {\bf 1,2}  and the conclusion from   {\bf 3} one obtains  that the dimension of the space on the right side in  \eqref{solsol}  is greater or equal than the dimension of the space on the left hand side in \eqref{solsol}. Thus one has 

$$Sol^{m_1,...,m_n}_{D_{IP}}= Sol^{m_1,...,m_n}_{ D_{Pl}}$$

As it was pointed in   {\bf 3},  the span of functions   $<F_{\gamma}(A)>$, listed in the formulation is the space   $Sol^{m_1,...,m_n}_{D_{IP}}$. This is a representation of the algebra   $\mathfrak{gl}_n$ since such property has $ Sol^{m_1,...,m_n}_{ D_{Pl}}$.   Also $<F_{\gamma}(A)>$ contains \eqref{stv}, it has the same dimension as the irreducible representation generated by \eqref{stv}. Hence   $<F_{\gamma}(A)>$ coincides with this representation.

\end{proof}

As a corollary one gets that functions on the group  $F_{\gamma}(a)$ also form a representation. It contains the highest vector   \eqref{stv}. Hence if one  takes shift vectors corresponding to different Gelfand-Tsetlin diagrams,  one obtains functions   $F_{\gamma}(a)$  that form a base in a representation. Due to the Theorem  \ref{ipt} the bases  $F_{\gamma}(a)$ and $\mathcal{F}_{\gamma}(a)$ are related by an invertible linear transformation  $mod IP$.

Below we prove that the base  $F_{\gamma}(a)$ is related to the Gelfand-Tsetlin base by a  {\it  low-triangular transformation} relatively to the order  \eqref{por1}.

To prove this fact and to find this transformation explicitly we use an invariant scalar product. The Gelfand-Tsetlin base is orthogonal relatively this scalar product.
Thus the transformation from the base   $F_{\gamma}(a)$ to the Gelfand-Tsetlin base is a lower-triangular transformation that diagonalizes  the quadratic form of the scalar product.

 \subsection{ An invariant scalar product in the functional representation.}
 
 \label{isk}

On a finite dimensional irreducible representation of  $GL_n$ there exists a unique up to multiplication on a constant hermitian product $\{.,.\}$ which is  invariant under the action of $U_n$.  It defines an invariant $\mathbb{C}$-bilinear scalar product  $(x,y):=\{x,\bar{y}\}$.
 
  The invariance of    $\{.,.\}$ with  respect to the action of the group    $U_n\subset GL_n(\mathbb{C})$ means that the following equality for the  $\mathbb{C}$-bilinear scalar product 
 
   \begin{equation}
   \label{inv}
   (E_{i,j}v,w)=(v,E_{j,i}w).
   \end{equation}

 Thus on an irreducible finite dimensional representation of $GL_n$ there exists a unique invariant  $\mathbb{C}$-bilinear scalar product for which \eqref{inv} holds.
Let us find it in terms of functional realization.

Consider firstly the space $V$  spanned by independent variables   $A_p$, $p=1,...,n$, onto which the algebra   $\mathfrak{gl}_n$ acts by the ruler

 $$
 E_{i,j} A_p=\delta_{j,p}A_i,
 $$

 where  $\delta_{j,p}  $ is the Kronecker  symbol.

 Introduce a $\mathbb{C}$-bilinear scalar product  $<,>$:

   $$
   <A_p,A_q>=\delta_{p,q}
   $$

    Let us check that it is invariant. Indeed:

  $$
 < E_{i,j}A_p,A_q>=\delta_{j,p}\delta_{i,q},\,\,\,\, <A_p,E_{j,i}A_q>=\delta_{i,q}\delta_{p,j}.
  $$

 Thus it is invariant. 

 Now consider the following construction. Let   $V$ be a space of representation of  $\mathfrak{gl}_n$ with an invariant scalar product   $<,>$. Then on   $V^{\otimes n}$ there exists an invariant scalar product given by the ruler

   $$
   <v_{i_1}\otimes...\otimes v_{i_n},w_{j_1}\otimes...\otimes w_{j_n}>=<v_{i_1},w_{j_1}>...<v_{i_n},w_{j_n}>.
   $$

 There exists a projection
$$
\pi  : V^{\otimes n} \rightarrow Sym^n(V),
$$

which is agreed with the action of  $\mathfrak{gl}_n$. It has a right inverse

$$
\pi^{-1}:   Sym^n(V)\rightarrow   V^{\otimes n} ,\,\,\,\,\,\,  v_{i_1}\cdot...\cdot v_{i_n}\mapsto \frac{1}{n!} \sum_{\sigma\in S_n} v_{i_{\sigma(1)}}\otimes...\otimes  v_{i_{\sigma(n)}}.
$$

It agrees with the action of $\mathfrak{gl}_n$. Thus we have an invariant scalar product given by the ruler

 \begin{equation}
 \label{vn}
 <v_{i_1}\cdot...\cdot v_{i_n},w_{j_1}\cdot...\cdot w_{j_n}>:=<\pi^{-1}(v_{i_1}\cdot...\cdot v_{i_n}),\pi^{-1}(
 w_{j_1}\cdot...\cdot w_{j_n})>.
 \end{equation}

Return to the space   $V$, spanned by variables   $A_p$,  apply to it the ruler  \eqref{vn}.  One obtains that the monomials   $A^{\gamma}$ (the multi-index notation is used) are orthogonal.  The scalar product equals 

\begin{align}
\begin{split}
\label{skp}
 & <A^{\gamma},A^{\gamma}>=\gamma!,\text{ where  } (\gamma_1,...,\gamma_N)!=\gamma_1!...\gamma_N!,\\
 &  <A^{\gamma},A^{\delta}>=0\text{ for }\gamma\neq \delta.
  \end{split}
 \end{align}

Note that the scalar product   \eqref{skp} can be written as follows:

\begin{equation}
 <A^{\gamma},A^{\delta}>=A^{\gamma}\curvearrowright A^{\delta}\mid_{A=0},
\end{equation}

where  $$A^{\gamma}\curvearrowright A^{\delta}:=(\frac{d}{dA})^{\gamma}A^{\delta}.$$

Let us construct an invariant scalar product on the space of functions on the group $GL_n$,  that form an irreducible finite dimensional representation.  The functions are written as expressions depending on determinants. The difficulty is that the determinants are not independent variables, they satisfy the Plucker relations. 

To overcome this difficulty note that  $V=span(\mathcal{F}_{\gamma}(a))=span(F_{\gamma}(a)),$ where  $\gamma$ are all possible Gelfand-Tsetlin diagrams for one representation. Both $\mathcal{F}_{\gamma}(a)$ and $F_{\gamma}(a)$ are bases.

It is enough to define a scalar product between base vectors. Put 

\begin{equation}
\label{skp1}
(F_{\gamma}(a),F_{\delta}(a)):=<F_{\gamma}(A),F_{\delta}(A)>=F_{\gamma}\curvearrowright F_{\delta}\mid_{A=0}.
\end{equation}

\begin{propos}
The formula \eqref{skp1} defines an invariant scalar product in the representation with the highest vector \eqref{stv}.
\end{propos}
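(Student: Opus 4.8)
The plan is to transport the whole question to the space of \emph{independent} variables $A_X$, where the bilinear form $<\cdot,\cdot>$ of \eqref{skp} lives, and to exploit the preceding Lemma, which identifies the representation $V=\mathrm{span}(F_\gamma(a))$ with the solution space $Sol_{D_{IP}}=\mathrm{span}(F_\gamma(A))$ of the A-GKZ system (for the weights fixed by $[m_1,\dots,m_n]$). Under the basis bijection $F_\gamma(a)\leftrightarrow F_\gamma(A)$ this is a well-defined linear isomorphism, since both families are bases of spaces of equal dimension, so \eqref{skp1} unambiguously defines a $\mathbb{C}$-bilinear form on $V$. Bilinearity is immediate from the definition of $\curvearrowright$, and symmetry follows because the underlying monomial form $<A^\alpha,A^\beta>=\alpha!\,\delta_{\alpha\beta}$ from \eqref{skp} is symmetric (both $F\curvearrowright G|_{A=0}$ and $G\curvearrowright F|_{A=0}$ read off the same sum over the matching monomials of $F$ and $G$).

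The heart of the proof is the adjunction property of $<\cdot,\cdot>$: multiplication by $A_Y$ is adjoint to $\partial_Y:=\partial/\partial A_Y$. This follows at once from \eqref{skp1}, because replacing each variable by the corresponding derivative is an algebra homomorphism, so $(A_Y F)(\tfrac{d}{dA})=\partial_Y\circ F(\tfrac{d}{dA})$ and hence
$$<A_Y F,\,G>=\partial_Y\big[F(\tfrac{d}{dA})G\big]\big|_{A=0}=<F,\,\partial_Y G>.$$
Writing the generator as the first-order operator $E_{i,j}=\sum_X A_{iX}\,\partial_{jX}$ (the operator form used in \eqref{eij}) and moving first $A_{iX}$ and then $\partial_{jX}$ across the form by this rule, each summand $<A_{iX}\,\partial_{jX}F,\,G>$ becomes $<F,\,A_{jX}\,\partial_{iX}G>$; summing over $X$ gives exactly $<E_{i,j}F,\,G>=<F,\,E_{j,i}G>$, which is the invariance \eqref{inv}. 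I would stress that this identity is first established on the full polynomial ring and only then restricted to $V$, a restriction that is legitimate precisely because the preceding Lemma guarantees that $E_{i,j}$ preserves $Sol_{D_{IP}}=\mathrm{span}(F_\gamma(A))$.

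It remains to establish non-degeneracy, which I would deduce from irreducibility rather than by direct computation of the Gram matrix. The radical $\{v:(v,w)=0\ \forall w\}$ is a $\mathfrak{gl}_n$-submodule: if $v$ lies in it then $(E_{i,j}v,w)=(v,E_{j,i}w)=0$ for all $w$ by the invariance just proved, so $E_{i,j}v$ is again in the radical. Since $V$ is irreducible the radical is either $0$ or all of $V$; but the highest vector \eqref{stv} is a single monomial $c\,A^{\gamma_0}$ with $(v_0,v_0)=c^2\,\gamma_0!\neq0$, so the radical is proper and therefore trivial. Hence \eqref{skp1} is a non-degenerate, symmetric, $\mathfrak{gl}_n$-invariant $\mathbb{C}$-bilinear form, i.e. the asserted scalar product.

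The step I expect to be the genuine obstacle is not any single calculation but the \emph{consistency of the definition across the Plucker relations}: a priori $(\cdot,\cdot)$ could depend on how a function of the dependent determinants $a_X$ is lifted to a polynomial in the free variables $A_X$. This is exactly what the earlier identification $V\cong Sol_{D_{IP}}$ resolves, by fixing the canonical lift $F_\gamma(a)\mapsto F_\gamma(A)$ and certifying that $E_{i,j}$ preserves the solution space; without that input the adjunction computation, clean as it is on the free polynomial ring, would not descend to the representation.
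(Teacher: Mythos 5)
Your proof is correct and follows the same overall skeleton as the paper's (very terse) argument: the definition is legitimate because the preceding Lemma shows the $F_\gamma(A)$ in the \emph{independent} variables already span a copy of the representation on which $E_{i,j}$ acts by $\sum_X A_{iX}\partial/\partial A_{jX}$, and invariance of \eqref{skp1} is then inherited from invariance of $\langle\cdot,\cdot\rangle$. Where you genuinely diverge is in how that last invariance is justified: the paper obtains it from its earlier $Sym^n(V)\hookrightarrow V^{\otimes n}$ construction leading to the monomial formula \eqref{skp}, whereas you prove it directly from the adjunction $\langle A_Y F,G\rangle=\langle F,\partial_Y G\rangle$ of the apolarity pairing $F(\tfrac{d}{dA})G|_{A=0}$ and then move $A_{iX}$ and $\partial_{jX}$ across the form term by term. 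Your route is more self-contained and makes the mechanism of \eqref{inv} transparent; the paper's route packages the same fact into a general statement about induced products on symmetric powers. You also supply two points the paper leaves implicit: symmetry of the pairing and non-degeneracy (via the radical being a submodule and the highest vector \eqref{stv} being a single monomial of non-zero norm); both arguments are sound and are worthwhile additions rather than gaps in your reasoning.
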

\proof
This definition is correct since the functions $F_{\gamma}(A)$ of independent variables  $A$ span a representation (without usage of equivalence   $mod IP$). Since  $<,>$ is invariant then the obtained scalar product is invariant.

\endproof

\subsection{
	A relation between the base $F_{\gamma}(a)$ and the Gelfand-Tsetlin base}

\label{fgc}

Note that in general  $(\mathcal{F}_{\gamma}(a),\mathcal{F}_{\delta}(a))\neq  <\mathcal{F}_{\gamma}(a),\mathcal{F}_{\delta}(a)>=\mathcal{F}_{\gamma}\curvearrowright \mathcal{F}_{\delta}\mid_{A=0}$. To find  $<\mathcal{F}_{\omega}(a),\mathcal{F}_{\delta}(a)>$ we has to express $\mathcal{F}_{\omega}(A)$, $\mathcal{F}_{\delta}(A)$  through  $F_{\omega}(A) $ modulo  $IP$.

\begin{propos} $<pl,F_{\delta}>=0$, where  $pl\in IP$.\end{propos}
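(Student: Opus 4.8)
The plan is to read the statement off directly from the explicit form of the scalar product together with the Lemma that characterizes membership in $IP$ through the action on solutions of the A-GKZ system. First I would expand the pairing using its realization via the substitution $A_X\mapsto\frac{d}{dA_X}$ and evaluation at the origin. Writing $pl=\sum_\beta c_\beta A^{u_\beta}$ and using bilinearity together with $<A^{u_\beta},F_\delta>=(\frac{d}{dA})^{u_\beta}F_\delta\mid_{A=0}$, one obtains
$$
<pl,F_\delta>=pl(\tfrac{d}{dA})F_\delta(A)\big|_{A=0}=\big(pl\curvearrowright F_\delta\big)\big|_{A=0}.
$$
Hence it suffices to show that the \emph{function} $pl\curvearrowright F_\delta=pl(\frac{d}{dA})F_\delta$ vanishes identically; then a fortiori it vanishes at $A=0$.

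The key step is to invoke the Lemma asserting that $f(a)=0\ mod\ IP$ if and only if $f(\frac{d}{dA})\curvearrowright F_\gamma(A)=0$ for the basic solutions $F_\gamma$. Since $pl$ lies in $IP$, we have $pl(a)=0\ mod\ IP$ tautologically, so the associated constant-coefficient operator $pl(\frac{d}{dA})$ annihilates every basic solution $F_\gamma$ and therefore, by linearity, every element of their span, i.e. the whole space of polynomial solutions of the A-GKZ system. In particular the vector $F_\delta$ occurring here is itself one of the irreducible solutions $F_\gamma$ attached to a Gelfand-Tsetlin diagram, so $pl(\frac{d}{dA})F_\delta=0$ as a function.

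Substituting this back yields $<pl,F_\delta>=0\mid_{A=0}=0$, as claimed. The argument thus rests on two ingredients already established: the description of the scalar product as differentiation followed by evaluation at the origin, and the equivalence between the ideal $IP$ and annihilation of the A-GKZ solution space. The only point needing a moment's care is that $F_\delta$ must indeed be one of the solutions to which that Lemma applies, which holds precisely because the vectors appearing in the proposition are the irreducible solutions $F_\gamma$. No genuine obstacle remains: the content of the statement is entirely carried by the previously proved correspondence between Plücker relations and differential operators annihilating the A-GKZ solutions, so the proof is essentially immediate once the pairing is rewritten in operator form.
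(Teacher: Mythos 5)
Your proposal is correct and follows essentially the same route as the paper: both rewrite the pairing as $pl(\frac{d}{dA})F_{\delta}\mid_{A=0}$ and then observe that this constant-coefficient operator annihilates $F_{\delta}$ because $F_{\delta}$ solves the A-GKZ system (the paper states this directly for the generators of $IP$, while you route it through the previously stated Lemma characterizing $IP$ via annihilation of the basic solutions, which is the same underlying fact).
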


\proof   One has $<pl,F_{\delta}>=pl\curvearrowright F_{\delta}\mid_{A=0}$. But the generators of the ideal  $IP$ act onto  $F_{\delta}$  as zero (since $F_{\delta}$  is a solution of A-GKZ).
\endproof
Thus one has

$$
(\mathcal{F}_{\gamma}(a),F_{\delta}(a))=  <\mathcal{F}_{\gamma}(a),F_{\delta}(a)>.
$$

Earlier the order   \eqref{por1}  on the sets  $\gamma+B$ was defined.  One can consider the order   \eqref{por1} being defined on the shift vectors.

Let us prove the following statement.
\begin{propos}
$(\mathcal{F}_{\gamma}(a),F_{\delta}(a))=
\begin{cases} 0\text{ if } \delta \prec \gamma \\
\frac{(-1)^u}{u!}J^u_{\delta}(1)\text{ if }\delta+ur=\gamma,\,\,u\in\,\mathbb{Z}^k_{\geq 0}\end{cases}
$
\end{propos}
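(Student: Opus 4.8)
The plan is to compute the product through the constant-coefficient pairing and to read off both cases from the support of $F_\delta$. First I would use the preceding Proposition ($\langle pl,F_\delta\rangle=0$ for $pl\in IP$) to pass from the invariant product to the bilinear pairing on independent variables, so that $(\mathcal{F}_\gamma(a),F_\delta(a))=\langle \mathcal{F}_\gamma(A),F_\delta(A)\rangle=\mathcal{F}_\gamma(\tfrac{d}{dA})F_\delta(A)\mid_{A=0}$ as in \eqref{skp1}. Writing $\mathcal{F}_\gamma(\tfrac{d}{dA})=\sum_{l\in\mathbb{Z}^k}\frac{1}{(\gamma+lv)!}(\tfrac{d}{dA})^{\gamma+lv}$ and noting that $(\tfrac{d}{dA})^{\gamma+lv}A^x\mid_{A=0}$ survives only for $x=\gamma+lv$, where it equals $(\gamma+lv)!$, the factorials cancel and I obtain the clean reduction that $(\mathcal{F}_\gamma(a),F_\delta(a))$ is the sum of the coefficients of $A^x$ in $F_\delta$ taken over all $x$ in the residue class $\gamma+B$.

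Next I would invoke the support description of an irreducible solution (the Remark after Lemma \ref{l3} together with \eqref{fgm}), namely $\operatorname{supp}F_\delta=\bigcup_{s\in\mathbb{Z}^k_{\geq0}}(\delta-sr+B)$, where $F_\delta=\sum_{s}\frac{(-1)^s}{s!}J^s_{\delta-sr}$. The key structural input is that the vectors $r^\alpha$ are linearly independent modulo $B$, so the classes $\delta-sr+B$ are pairwise distinct; hence the class $\gamma+B$ can coincide with at most one of them. This immediately splits the computation into the two cases. If $\gamma+B$ lies strictly above every class occurring in the support — which is exactly the situation $\delta\prec\gamma$, since each $\delta-sr+B$ is $\preceq\delta+B\prec\gamma+B$ for the order \eqref{por1} — then no monomial of $F_\delta$ lies in the class $\gamma+B$, the coefficient sum is empty, and the product vanishes. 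This gives the first case.

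In the remaining case $\gamma+B$ meets the support in exactly one class, namely the one with $s=u$, and only the single summand $\frac{(-1)^u}{u!}J^u_{\delta-ur}$ of $F_\delta$ contributes. Since every monomial of $J^u_{\delta-ur}$ already sits in this one class, summing all its coefficients is the same as substituting $1$ for every variable, so the coefficient sum equals $J^u_{\delta-ur}(1)$ and the product becomes $\frac{(-1)^u}{u!}J^u_{\delta-ur}(1)$, which under the relation between $\gamma$ and $\delta$ is the claimed hypergeometric constant. Alternatively the same conclusion follows by applying Lemma \eqref{osnf} to $\mathcal{F}_\gamma(\tfrac{d}{dA})F_\delta$ and then setting $A=0$, using that $F_\mu(A)\mid_{A=0}$ retains only the unique term whose exponent lattice passes through the origin; this route produces the sign $\frac{(-1)^u}{u!}$ and the value $J^u(1)$ directly.

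The main obstacle I anticipate is the index bookkeeping rather than any hard estimate. Establishing that the classes $\delta-sr+B$ are genuinely distinct (equivalently, independence of the $r^\alpha$ modulo $B$) is what guarantees a single surviving term, and hence a single clean constant instead of an uncontrolled sum; and tracking the shift-vector representative and sign through the reduction is what pins the surviving value to exactly $\frac{(-1)^u}{u!}J^u_\delta(1)$ and correctly orients the $\gamma,\delta$ roles in the two cases. The reduction to a coefficient sum and the evaluation-at-$1$ identity are routine once the order-theoretic comparison of $\gamma+B$ with $\operatorname{supp}F_\delta$ is in hand.
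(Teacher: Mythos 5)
Your proposal follows the same route as the paper's own proof: reduce $(\mathcal{F}_{\gamma}(a),F_{\delta}(a))$ to the constant term of $\mathcal{F}_{\gamma}(\tfrac{d}{dA})F_{\delta}(A)$ at $A=0$, observe that only monomials of $F_{\delta}$ with exponent in $\gamma+B$ can contribute, and read the answer off the support decomposition $supp\,F_{\delta}=\bigcup_{s\in\mathbb{Z}^k_{\geq 0}}(\delta-sr+B)$. Your explicit reduction to ``the sum of the coefficients of $A^x$ over $x\in\gamma+B$'' and your justification of the vanishing case via antisymmetry of the order \eqref{por1} are, if anything, spelled out more carefully than in the paper.

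The one place where your argument does not close is precisely the bookkeeping you flag at the end. Your own support analysis shows the pairing is nonzero only when $\gamma+B=\delta-ur+B$ for some $u\in\mathbb{Z}^k_{\geq 0}$, i.e.\ $\gamma+ur=\delta \bmod B$, with surviving contribution $\frac{(-1)^u}{u!}J^u_{\delta-ur}(1)$. This does not literally match the stated condition $\delta+ur=\gamma$ nor the stated value $\frac{(-1)^u}{u!}J^u_{\delta}(1)$: the functions $J^u_{\cdot}(1)$ depend on the actual shift vector and not only on its class mod $B$, so $J^u_{\delta-ur}(1)\neq J^u_{\delta}(1)$ in general, and appealing to ``the relation between $\gamma$ and $\delta$'' does not decide which representative appears. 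Note also that with the literal order \eqref{por1} the condition $\delta+ur=\gamma$ with $u\neq 0$ would itself put you in the case $\delta\prec\gamma$, i.e.\ the two cases of the statement would overlap; your single-surviving-class argument is only consistent if the condition is read as $\gamma=\delta-ur$. To be fair, the paper's proof is equally loose at exactly this point (it writes the surviving sum with denominator $(\delta+tv)!$ although the contributing term of $F_{\delta}$ is $J^u_{\delta-ur}$, whose denominator is $(\delta-ur+tv)!$), so the discrepancy is inherited from the statement rather than introduced by you; but a complete argument must fix the orientation of the condition and record the representative at which $J^u(1)$ is evaluated, since these constants are used quantitatively in Theorems \ref{gt1} and \ref{gt2}.
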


\proof

One has to calculate  $\mathcal{F}_{\gamma}\curvearrowright F_{\delta}\mid_{A=0}$. In order to obtain a non-zero result among the summands in  $\mathcal{F}_{\gamma}\curvearrowright F_{\delta}$ a constant must occur. Since $mod B_{}$, one has $supp\mathcal{F}_{\gamma}=\gamma$, and $supp F_{\delta}=\bigcup_{s\in\mathbb{Z}_{\geq 0}^k}(\gamma-sr)$, the result is non-zero if $\delta+ur=\gamma\,\,\, mod\,\,\, B_{}$.

Let  $\delta+ur=\gamma$.  In this case the constant that appears under the action   $\mathcal{F}_{\gamma}\curvearrowright F_{\delta}$ equals

$$
\sum_{t\in\mathbb{Z}^k} \frac{(-1)^u}{u!}\frac{(t+1)...(t+u)}{(\delta+tv)!}=\frac{(-1)^u}{u!}J^u_{\delta}(1)
$$

\endproof

\begin{cor}
The base $F_{\gamma}(a)$ is related with the Gelfand-Tsetlin base by a lower-triangular transformation.
\end{cor}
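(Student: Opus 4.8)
The plan is to deduce the triangularity of the $F_\gamma\to G_\gamma$ transition purely from linear algebra over the partially ordered index set of shift vectors $(\,\{\gamma\},\preceq\,)$, feeding in three ingredients already at our disposal: the orthogonality of the Gelfand--Tsetlin base $\{G_\gamma(a)\}$ for the invariant product $(\cdot,\cdot)$, the upper-triangular relation between $\mathcal{F}_\gamma(a)$ and $G_\gamma(a)$ proved above, and the triangular shape of the mixed Gram matrix supplied by the preceding Proposition. I would phrase everything through the incidence algebra of the poset $\preceq$, in which a matrix $(A_{\gamma\delta})$ is called upper-triangular when $A_{\gamma\delta}=0$ unless $\gamma\preceq\delta$; such matrices form an algebra, and those with nonvanishing diagonal are invertible inside it with upper-triangular inverse. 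This is the only abstract fact I shall need.

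First I would record the two transition matrices. By the theorem relating the Gelfand--Kapranov--Zelevinsky and the Gelfand--Tsetlin bases one may write
$$
\mathcal{F}_\gamma(a)=\sum_{\mu}U_{\gamma\mu}\,G_\mu(a),
$$
with $U$ upper-triangular with respect to $\preceq$ and with nonzero diagonal. Writing the unknown transition as $F_\delta(a)=\sum_\epsilon P_{\delta\epsilon}G_\epsilon(a)$, the assertion to be proved is precisely that $P$ is lower-triangular, i.e. $P_{\delta\epsilon}=0$ unless $\epsilon\preceq\delta$. Next, using that $\{G_\mu\}$ is orthogonal I set $N_\mu:=(G_\mu(a),G_\mu(a))$ and expand
$$
(\mathcal{F}_\gamma(a),F_\delta(a))=\sum_{\mu,\epsilon}U_{\gamma\mu}P_{\delta\epsilon}\,(G_\mu(a),G_\epsilon(a))=\sum_{\mu}U_{\gamma\mu}N_\mu P_{\delta\mu},
$$
so that the Gram matrix $M_{\gamma\delta}:=(\mathcal{F}_\gamma(a),F_\delta(a))$ factors as $M=U\,N\,P^{\top}$ with $N=\operatorname{diag}(N_\mu)$. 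The preceding Proposition shows that $M$ is upper-triangular (it vanishes unless $\gamma\preceq\delta$) with nonzero diagonal. Solving, $P^{\top}=N^{-1}U^{-1}M$ is a product of upper-triangular matrices, hence upper-triangular, and therefore $P$ itself is lower-triangular, which is the claim.

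The step I expect to carry the real weight is checking that each of the three input matrices is genuinely triangular in the poset sense, i.e. that the relevant products vanish for all \emph{incomparable} pairs and not merely when one index lies strictly below the other. For $M$ this is exactly the content of the preceding Proposition, whose proof rests on the fact that $\operatorname{supp}F_\delta\subseteq\{\epsilon:\epsilon\preceq\delta\}\ (\mathrm{mod}\,B)$; one must keep careful track of the order of $\mathcal{F}_\gamma$ and $F_\delta$ so as to land on \emph{lower}- rather than upper-triangularity for $P$. Two smaller points must also be secured: that $N_\mu=(G_\mu(a),G_\mu(a))\neq 0$, which holds because the invariant bilinear form is nondegenerate on the irreducible representation while $\{G_\mu\}$ is an orthogonal base, so no diagonal entry can vanish; and that $U$ has a nonvanishing diagonal, which is already part of the cited upper-triangular relation. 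Granting these, the factorization $M=U\,N\,P^{\top}$ together with the invertibility of triangular matrices with nonzero diagonal finishes the proof.
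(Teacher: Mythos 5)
Your proposal is correct and follows essentially the same route as the paper: the paper's own proof likewise combines the upper-triangular relation between $\mathcal{F}_{\gamma}$ and the Gelfand--Tsetlin base, the vanishing pattern of the mixed products $(\mathcal{F}_{\gamma}(a),F_{\delta}(a))$ from the preceding Proposition, and the orthogonality of the Gelfand--Tsetlin base. You merely make the linear algebra explicit via the factorization $M=U\,N\,P^{\top}$ and correctly flag the two points the paper leaves implicit (nonvanishing of the diagonals and the orientation of the triangularity of $M$, which must be read as vanishing unless $\gamma\preceq\delta$ to be consistent with the support argument in the Proposition's proof).
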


\proof
From one hand $\mathcal{F}_{\gamma}$ is related with the Gelfand-Tsetlin base vectors by an upper-triangular transformation. From the other hand the scalar product  $(\mathcal{F}_{\gamma}(a),F_{\delta}(a))$ is non-zero only in the case  $\delta\preceq \gamma$. Since the Gelfand-Tsetlin base is orthogonal one obtains the statement.

\endproof

\subsection{The scalar products of functions $F_{\gamma}(a)$}
\label{kvf}

Find the scalar product $(F_{\gamma},F_{\omega})=<F_{\gamma},F_{\omega}>$.   Considers the supports of functions, one concludes that this scalar product is non-zero in the case $\gamma=\delta+l_1r\,\,\, mod B$, $\omega=\delta+l_2r\,\,\, mod B$, $l_1,l_2\in\mathbb{Z}^k_{\geq 0}$.

%$$
%<F_{\delta+l_1r},F_{\delta+l_2r}>,\,\,\,l_1,l_2\in\mathbb{Z}^k_{\geq 0}.
%$$

Also by considering the supports  and using the  expression $F_{\gamma}\curvearrowright F_{\delta}\mid_{A=0}$
%Рассматривая носители и выражение $F_{\gamma}\curvearrowright %F_{\delta}\mid_{A=0}$ приходим к выводу, что скалярное произведение 
one concludes that $
<F_{\delta+l_1r},F_{\delta+l_2r}>,\,\,\,l_1,l_2\in\mathbb{Z}^k_{\geq 0}
$ equals

$$
\sum_{u\in \mathbb{Z}^k_{\geq 0}}(-1)^{l_1+l_2} \frac{(t+1)...(t+u+l_1)(t+1)...(t+u+l_2)}{(\delta-ur+tv)!(u+l_1)!(u+l_2)!}
$$
for which $min(l_1,l_2)=\{min(l_1^i,l_2^i)\}=0$.
Introduce functions

\begin{equation}
\label{juu}
J_{\delta}^{u+l_1;u+l_2}(A):=\sum_{t\in\mathbb{Z}^k}\frac{(t+1)...(t+u+l_1)(t+1)...(t+u+l_2)}{(\delta+tv)!(u+l_1)!(u+l_2)!}A^{\delta+tv}.
\end{equation}

%
%Эти функции удовлетворяют соотношению
%\begin{equation}
%\label{oj}
%\mathcal{O}_{\alpha} J_{\delta}^{u+l_1;u+l_2}= %(u+l_1)J_{\delta-v_{\alpha}^+}^{u+l_1-1;u+l_2}+(u+l_2)J_{\delta-v_{\alpha}^+}^{u+l_1;u+l_2-1}-(u+l_1)(u+l_2)J_{\delta-v_{\alpha}^+}^{u+l_1-1;u+l_2-1}.
%\end{equation}

   Also introduce functions
\begin{equation}
\label{fll}
F_{\delta}^{l_1,l_2}(A):=\sum_{u\in\mathbb{Z}^k_{\geq 0}}\frac{(-1)^{l_1+l_2}J_{\delta-ur}^{u+l_1;u+l_2}(A)}{(u+l_1)!(u+l_2)!}.
\end{equation}

One has

$$
<F_{\delta+l_1r},F_{\delta+l_2r}>=F_{\delta}^{l_1,l_2}(1),
$$

where it is suggested that $min(l_1,l_2)=0$.

\subsection{A function corresponding to a Gelfand-Tsetlin vector  }
\label{itg}
\begin{definition}
Let  $\delta_0$ be a shift vector which is
% corresponding to a
 minimal with   respect to  the order   \eqref{por1}.
\end{definition}
% Другими словами,  это минимальный относительно порядка  $<$  вектор.
%Пока этот вектор  определен  $mod B$.

 Then one can suppose that  the vector corresponding to an arbitrary shift vector can be written as follows 

$$
\delta_0+mr,\,\,\,\,\, m\in\mathbb{Z}^k_{\geq 0}.
$$

This suggestion implies that the scalar product  $<F_{\delta},F_{\omega}>$ is non-zero only  if the following relation takes  place $\gamma=\delta+l_1r$, $\omega=\delta+l_2r$, $l_1,l_2\in\mathbb{Z}^k_{\geq 0}$.  In contrast to the previous Section this relations for  $\gamma$ and $\omega$ holds exactly not   $modB$.

Consider the bilinear form corresponding to the scalar product in the base    $F_{\gamma}$:

$$
q=\sum_{\delta,\gamma} x_{\gamma}y_{\delta}<F_{\delta},F_{\omega}>,$$

 the summation is taken over all  chosen shift vector.

Let us write explicitly the low-triangular change of coordinates that diagonalizes this quadratic form. Fix an arbitrary   $\delta$   and consider the space $span(F_{\gamma},\gamma\preceq \delta)$. Then the summands    $q$,  that contain  $x_{\delta}$ are written as follows 

  $$
  q=F_{\delta}^{0,0}x_{\delta}^2+\sum_{l\in\mathbb{Z}^k_{\geq 0},\,\,\, l\neq 0}2F_{\delta-lr}^{l,0}(1)x_{\delta-lr}x_{\delta}+\cdots
  $$

Apply the  Lagrange algorithm,  one obtains that the diagonalizing change of variables looks as follows 
\begin{equation}
\label{zamena}
x'_{\delta}=\sum_{l\in\mathbb{Z}^k_{\geq 0}}F_{\delta-lr}^{l,0}(1)x_{\delta-lr}.
\end{equation}

Since  $\delta$  is arbitrary one comes to the following conclusion. Let  $G_{\delta}$ be a function that corresponds to a Gelfand-Tsetlin diagram (corresponding to the shift vector $\delta$).
One has an expression of the diagonalizing variables though the initial  variables. Thus one has an expression  of  $F_{\delta}$ though $G_{\delta}$.

\begin{theorem}
	\label{gt1}
\begin{align}
\begin{split}
\label{fg}
&F_{\delta}(A)=\sum_{l\in\mathbb{Z}^k_{\geq 0}}C_{\delta}^l\cdot G_{\delta-lr}(A),\\
&C_{\delta}^l=F_{\delta-lr}^{l,0}(1)=\sum_{u\in\mathbb{Z}^k_{\geq 0},t\in\mathbb{Z}^k}\frac{(-1)^{l}(t+1)...(t+u+l)(t+1)...(t+u)}{(\delta-(l+u)r+tv)!(u+l)!u!}.
\end{split}
\end{align}

\end{theorem}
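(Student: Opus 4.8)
The plan is to obtain the Gelfand--Tsetlin functions $G_\delta$ as the orthogonalisation of the already constructed basis $F_\delta(a)$ with respect to the invariant scalar product \eqref{skp1}, and then to identify the resulting coefficients with the hypergeometric constants $C_\delta^l$. The starting point is the Corollary of \S\ref{fgc}, which states that $F_\delta$ is expressed through the Gelfand--Tsetlin basis by a transformation that is lower-triangular with respect to the order \eqref{por1}; hence $F_\delta=\sum_{l\in\mathbb{Z}^k_{\geq0}}c_\delta^l\,G_{\delta-lr}$ for some numbers $c_\delta^l$, and the whole content of Theorem \ref{gt1} is the evaluation $c_\delta^l=C_\delta^l$ together with the explicit series in \eqref{fg}.

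First I would record that, because the $G_\gamma$ are mutually orthogonal and the transformation is triangular, the subspaces $\mathrm{span}(F_\gamma:\gamma\preceq\delta)=\mathrm{span}(G_\gamma:\gamma\preceq\delta)$ form a flag adapted to the scalar product; consequently $G_\delta$ is, up to a scalar, the component of $F_\delta$ orthogonal to all strictly lower $G_\gamma$, the normalisation being pinned down by the requirement that the leading coefficient $c_\delta^0$ equal $\langle F_\delta,F_\delta\rangle=F_\delta^{0,0}(1)$. This reduces the theorem to a Gram--Schmidt (equivalently Lagrange) diagonalisation of the bilinear form $q=\sum_{\gamma,\delta}x_\gamma x_\delta\langle F_\gamma,F_\delta\rangle$ written in the basis $F_\delta$, whose entries are known explicitly from \S\ref{kvf}, namely $\langle F_{\delta+l_1r},F_{\delta+l_2r}\rangle=F_\delta^{l_1,l_2}(1)$, and are nonzero only when the two arguments are $\preceq$-comparable.

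Next I would run Lagrange's completion of squares on $q$, treating each $x_\delta$ as the maximal variable of the down-set $\{\gamma\preceq\delta\}$. Since $\langle F_\gamma,F_\delta\rangle=0$ unless $\gamma$ and $\delta$ are comparable, the variable $x_\delta$ couples only to the lower variables $x_{\delta-lr}$, with coefficients $F_{\delta-lr}^{l,0}(1)$, so completing the square produces precisely the diagonalising substitution \eqref{zamena}, $x'_\delta=\sum_l F_{\delta-lr}^{l,0}(1)\,x_{\delta-lr}$. Interpreting the orthogonal directions of $q$ as the $G_\delta$ and translating this coordinate change into the corresponding change of basis yields $F_\delta=\sum_l C_\delta^l\,G_{\delta-lr}$ with $C_\delta^l=F_{\delta-lr}^{l,0}(1)$, which is the first line of \eqref{fg}. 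The explicit double series in the second line then follows by unwinding the definitions \eqref{fll} and \eqref{juu} and substituting $A=1$.

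The hard part will be the triangular bookkeeping in the passage from the diagonalising coordinate change to the basis expansion: one must verify that the direction of triangularity comes out correctly (lower, as dictated by orthogonality, rather than upper as a naive transpose would suggest) and that the completion-of-squares coefficients genuinely close up into the single series $F_{\delta-lr}^{l,0}(1)$, so that the off-diagonal Gram entries together with the corrections coming from the lower $G_\gamma$ reassemble into one hypergeometric constant rather than an uncontrolled Gram--Schmidt tail; the chosen normalisation of the $G_\delta$ must be checked to be consistent at every level of the flag. Finiteness of all the series will not be an issue, since only polynomial solutions occur and each $F_\gamma$ is supported on the finite set $M_\gamma$ of \eqref{mgm}, which guarantees that the Lagrange procedure terminates and that \eqref{zamena} is a genuine finite triangular change of variables.
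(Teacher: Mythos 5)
Your proposal follows essentially the same route as the paper: it invokes the lower-triangularity from the Corollary of Section \ref{fgc}, takes the Gram entries $\langle F_{\delta+l_1r},F_{\delta+l_2r}\rangle=F_{\delta}^{l_1,l_2}(1)$ from Section \ref{kvf}, and diagonalizes the form $q$ by Lagrange's completion of squares to obtain \eqref{zamena}, identifying the orthogonal directions with the Gelfand--Tsetlin vectors and reading off $C_{\delta}^l=F_{\delta-lr}^{l,0}(1)$. The bookkeeping issues you flag (direction of triangularity, normalisation, passage from the coordinate change to the basis expansion) are exactly the points the paper itself treats briefly, so your argument matches the paper's proof in both substance and level of detail.
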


One can find an inverse to   \eqref{fg}. The result is the following

\begin{theorem}
	\label{gt2}
\begin{align}
\begin{split}
\label{gf}
&G_{\delta}(A)=\sum_{l\in\mathbb{Z}^k_{\geq 0}}S_{\delta}^l\cdot F_{\delta-lr}(A),\\
&S_{\delta}^0=\frac{1}{C_{\delta}^0},\,\,\,\,S_{\delta}^l=-\frac{C_{\delta}^l}{C_{\delta}^0 C_{\delta-lr}^0},\,\,l\neq 0.
\end{split}
\end{align}

\end{theorem}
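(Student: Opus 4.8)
The plan is to read \eqref{gf} off as the inverse of the triangular relation \eqref{fg} established in Theorem~\ref{gt1}. First I would check that \eqref{fg} really is triangular for the order \eqref{por1}: in $F_\delta=\sum_{l\in\mathbb{Z}_{\geq0}^k}C_\delta^l G_{\delta-lr}$ the index $\delta-lr$ satisfies $\delta-lr\prec\delta$ whenever $l\neq0$, the diagonal coefficient $C_\delta^0=F_\delta^{0,0}(1)$ is a nonempty sum of positive terms and hence nonzero, and every chain descending from $\delta$ under repeated subtraction of the $r^\alpha$ is finite --- this is exactly the argument with the functionals $\chi_u^m$ used in the proof of Lemma~\ref{l2}. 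Consequently the matrix $(C_\delta^l)$ is locally finite, lower triangular with invertible diagonal, so it admits a unique inverse of the same shape $G_\delta=\sum_{l}S_\delta^l F_{\delta-lr}$.

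To determine the $S_\delta^l$ I would substitute \eqref{fg} into this ansatz, obtaining $G_\delta=\sum_l S_\delta^l\sum_m C_{\delta-lr}^m G_{\delta-lr-mr}$, and compare coefficients of $G_{\delta-nr}$. This yields, for each $n\in\mathbb{Z}_{\geq0}^k$, the relation $\sum_{0\le l\le n}S_\delta^l C_{\delta-lr}^{\,n-l}=\delta_{n,0}$. For $n=0$ this gives $S_\delta^0 C_\delta^0=1$, i.e.\ $S_\delta^0=1/C_\delta^0$. For $n\neq0$ the two extreme terms $l=0$ and $l=n$ contribute $S_\delta^0 C_\delta^n+S_\delta^n C_{\delta-nr}^0$; setting this to zero and inserting $S_\delta^0=1/C_\delta^0$ produces precisely $S_\delta^n=-C_\delta^n/(C_\delta^0 C_{\delta-nr}^0)$, as claimed in \eqref{gf}.

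The delicate step, and the one I expect to be the main obstacle, is to show that the two boundary terms already close the system --- that is, that the intermediate sum $\sum_{0<l<n}S_\delta^l C_{\delta-lr}^{\,n-l}$ vanishes, so that the compact two-term expression in \eqref{gf} is the \emph{exact} inverse and not merely its first off-diagonal. This is an identity among the hypergeometric constants $C_\delta^l=F_{\delta-lr}^{l,0}(1)$. I would establish it from the orthogonality of the Gelfand-Tsetlin vectors: relation \eqref{fg} gives $\langle G_\delta,F_{\delta-lr}\rangle=0$ for all $l\neq0$, and feeding the ansatz $G_\delta=\sum_m S_\delta^m F_{\delta-mr}$ into these vanishing conditions, together with the explicit scalar products $F_\delta^{l_1,l_2}(1)$ computed in Section~\ref{kvf}, pins down the $S_\delta^m$ as the dual family and forces the required cancellation. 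Once this identity is secured, \eqref{gf} follows, and the normalizations $S_\delta^0=1/C_\delta^0$ and $S_\delta^l=-C_\delta^l/(C_\delta^0 C_{\delta-lr}^0)$ are exactly the values found above.
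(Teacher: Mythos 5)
Your overall route is the same as the paper's: Theorem~\ref{gt2} is obtained by formally inverting the triangular relation \eqref{fg}, and your computation of the boundary coefficients ($S_\delta^0 C_\delta^0=1$ from the $n=0$ equation, and $S_\delta^0 C_\delta^n+S_\delta^n C_{\delta-nr}^0=0$ from the two extreme terms of the $n\neq 0$ equation) is exactly what produces the stated $S_\delta^l$. You have also correctly located the one genuinely nontrivial point: after inserting the claimed $S_\delta^l$, the convolution identity $\sum_{0\le l\le n}S_\delta^l C_{\delta-lr}^{\,n-l}=\delta_{n,0}$ reduces to
\begin{equation*}
\sum_{0<l<n}\frac{C_\delta^l\,C_{\delta-lr}^{\,n-l}}{C_{\delta-lr}^0}=0
\end{equation*}
for every $n\in\mathbb{Z}^k_{\ge 0}$ admitting a decomposition into two nonzero nonnegative parts; without this, the two-term expression is only the first off-diagonal of the true inverse, not the inverse itself.

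The gap is that this identity is never established, and your proposed fix does not close it. Once \eqref{fg} is granted, the coefficients are $C_\delta^l=\langle F_\delta,G_{\delta-lr}\rangle/\langle G_{\delta-lr},G_{\delta-lr}\rangle$, so the orthogonality of the $G_\gamma$ is already encoded in \eqref{fg}; it determines the $S_\delta^l$ uniquely as the matrix inverse but supplies no further relation that would force the interior sum to vanish. Your more concrete suggestion --- feed $G_\delta=\sum_m S_\delta^m F_{\delta-mr}$ into the vanishing conditions $\langle G_\delta,F_{\delta-nr}\rangle=0$ and use the Gram entries $F_\delta^{l_1,l_2}(1)$ from Section~\ref{kvf} --- is the right kind of additional input, but it merely replaces the needed identity by an equivalent one among the hypergeometric constants, which still has to be verified; you assert that orthogonality "forces the required cancellation" without carrying out that verification. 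To be fair, the paper itself offers nothing beyond the sentence "one can find an inverse to \eqref{fg}," so your write-up is at least as explicit as the source; but as a self-contained proof it stops exactly at the step you yourself flag as the main obstacle.
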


\begin{remark}
Unfortunately we don not  know how one show that from the formula  \eqref{gf}  in the case  $n=3$ it follows that $G_{\gamma}=\mathcal{F}_{\gamma}$  modulo the Plucker relations. Mention that the case $n=3$ is very specific. For example using the formula (65)  from \cite{A1},  it is possible to construct another base in the space of solutions of the  A-GKZ system:

 $$
 \tilde{F}_{\gamma}(a)=a_3^{\gamma_3}a_{1,2}^{\gamma_{1,2}}\sum_{s\in\mathbb{Z}_{\geq 0}}const_s\cdot (a_1a_{2,3}-a_2a_{1,3})^s\mathcal{F}_{\gamma-sv^+}(a),
 $$

 This base does not have an analogue in the case  $n>3$.  Using it instead of $F_{\delta}$ one can conclude that  in the case  $n=3$  one has  $G_{\gamma}=\mathcal{F}_{\gamma}$  modulo the Plucker relations.
\end{remark}

\subsection{The coefficients in the Theorems  \ref{gt1}, \ref{gt2}}

\label{cft}
In the formulas  \eqref{fg}, \eqref{gf} there participate the  expressions  $c_l=F_{\delta-lr}^{l,0}(1)$.  This expression is defined as a sum of a series  (actually the sum is finite).  In this  Section we consider this series in detail. 

\subsubsection{ An expression for $J_{\delta}^{u+l_1;u+l_2}(A)$}

First of all let us show that the function $J_{\delta}^{u+l_1;u+l_2}(A)$  can be expressed through simpler functions  $J_{\delta}^{v}$.

Consider the expression 
$
c_t^{a,b}:=(t+1)....(t+a)(t+1)...(t+b)
$
and represent it as a linear combination of expressions $c_t^c:=(t+1)...(t+c)$.  One has an equality 

$$
c_t^{a,b}=\sum_{c=0}^{a+b} k_c c_{t}^{c},
$$

 Let us find  coefficients  $k_c$ in this equality.  We use the operator    $O$ defined as follows

 $$
 Oc_t:=c_t-c_{t-1},
 $$

 and the operator of substitution  $\mid_{t=-1}$.  Let us use the rulers

 \begin{align*}
 &O c_t^{a,b}=a\cdot c_t^{a-1,b}+b\cdot c_{t}^{a,b-1}-ab\cdot c_t^{a-1,b-1},\\
 &O c_t^c=c\cdot c_t^{c-1}.
 \end{align*}

Note that  $c_{t}^{a,b}\mid_{t=-1}\neq 0$ only if    $a=b=0$ and    $c_{t}^{0,0}\mid_{t=-1}=1$, analogously   $c_{t}^{c}\mid_{t=-1}\neq 0$  only if    $c=0$ and    $c_{t}^{0}\mid_{t=-1}=1$.

Using these facts one obtains that  $O^pc_{t}^{a,b}\mid_{t=-1}\neq 0$ under the condition  $max(a,b)\leq p\leq a+b$, and in this case
$O^pc_{t}^{a,b}\mid_{t=-1}=a!b!(-1)^{a+b-p}$.  From the other hand   $O^pc_{t}^{c}\mid_{t=-1}\neq 0$  for  $c=p$,   under the condition   $O^cc_{t}^{c}=c!$.  One gets

$$
k_c=\frac{a!b!}{c!}(-1)^{a+b-c}.
$$

Apply this relation to the function   $J_{\delta}^{u+l_1,u+l_2}(A)$ defined by the formula  \eqref{juu}, one gets

$$
J_{\delta}^{u+l_1,u+l_2}(A)=\sum_c\frac{(u+l_1)!(u+l_2)!}{c!}(-1)^{2u+l_l+l_2-c}J_{\delta}^c(A),
$$
where  $max(l_1,l_2)\leq c\leq 2u+l_1+l_2$.

 For the functions  $F_{\delta}^{l_1,l_2}(A)$,  defined by the equality \eqref{fll}, one has
 \begin{align*}
 F_{\delta}^{l_1,l_2}(A)=\sum_{u\in\mathbb{Z}_{\geq 0}^k}\sum_{u+max(l_1,l_2)\leq c\leq 2u+l_1+l_2}\frac{(-1)^cJ_{\delta}^c(A)}{c!}
 \end{align*}

 Consider the scalar product  $<F_{\delta},F_{\delta}>=F_{\delta}^{0,0}(1)$.  It is written as a sum of values at  $A=1$ of the following function

\begin{align*}
& \frac{1}{0!}J_{\delta}^0(A)\\
& -\frac{1}{1!}J_{\delta-r^{\alpha}}^{e_{\alpha}}(A) & \frac{1}{2!}J_{\delta-r^{\alpha}}^2(A)\\
& \frac{1}{2!}J_{\delta-2r^{\alpha}}^{2e_{\alpha}}(A) & -\frac{1}{3!}J_{\delta-3r^{\alpha}}^{3 e_{\alpha}}(A) && \frac{1}{4!}J_{\delta-4r^{\alpha}}^{4e_{\alpha}}(A)\\
\end{align*}

Here the index  is fixed $\alpha$, one has to consider shift for all indices    $\alpha=1,...,k$.

Consider the scalar product $<F_{\delta},F_{\delta-r_{\alpha}}>=F_{\delta-r_{\alpha}}^{1_{\alpha},0}(1)$.  It is presented as a sum of values at  $A=1$ of the following functions

\begin{align*}
& -\frac{1}{1!}J_{\delta-r^{\alpha}}^{e_{\alpha}}(A)\\
& \frac{1}{2!}J_{\delta-2r^{\alpha}}^{2e_{\alpha}}(A) & -\frac{1}{3!}J_{\delta-2r^{\alpha}}^{3e_{\alpha}}(A)\\
& -\frac{1}{3!}J_{\delta-3r^{\alpha}}^{3e_{\alpha}}(A) & \frac{1}{4!}J_{\delta-3r^{\alpha}}^{4e_{\alpha}}(A) && -\frac{1}{5!}J_{\delta-3r^{\alpha}}^{5e_{\alpha}}(A)\\
\end{align*}

Thus one gets

\begin{align}
\begin{split}
\label{hrn0}
&F_{\delta-lr}^{l,0}(A)=\sum_{u\in \mathbb{Z}^k_{\geq 0}}\sum_{0\leq s\leq u}(-1)^{l+u}\frac{J^{l+u+s}_{\delta-(l+u)r}(A)}{(l+u+s)!}
\end{split}
\end{align}

\subsubsection{A relation to the Horn's functions}
The functions    $J_{\gamma}^s(A)$ have the following description on the language of Horn's functions. Let $\zeta\in\mathbb{C}^k$.   Then  $$H(\zeta)=\sum_{t\in \mathbb{Z}^k}c(t)\zeta^t,\,\,\,c(t)\in\mathbb{C}$$ is called the Horn series if 

$$\frac{c(t+e_{\alpha})}{c(t)}$$  is a rational function of  $t$,  in other words if

$$\frac{c(t+e_{\alpha})}{c(t)}=\frac{P_{\alpha}(t)}{Q_{\alpha}(t)},\,\,\,\alpha=1,...,k,$$
where $P_{\alpha},Q_{\alpha}$ are polynomials  (see \cite{sts}, \cite{GG})).

With a   $\Gamma$-series $\mathcal{F}_{\gamma}(A)$ one associates the following Horn series. Let us write  

\begin{align*}
&\mathcal{F}_{\gamma}(A)=\sum_{x=\gamma+t_1v_1+...+t_kv_k}\frac{A^x}{x!}=\sum_{t}\frac{A^{\gamma+tv}}{(\gamma+tv)!}=A^{\gamma}\sum \frac{(A^{v})^{t}}{(\gamma+t_1v_1+...+t_kv_kN)!}=\\
&=A^{\gamma}\sum_{t}\frac{\zeta^t}{(\gamma+tv)!}=A^{\gamma}H_{\gamma}(\zeta),
\end{align*}

where

$$
\zeta_1=A^{v_1},...,\zeta_k=A^{v_N}.
$$

Then

\begin{align*}
&J^{s}_{\gamma}(A)=\sum_{t}\frac{(t+1)...(t+s)A^{\gamma+tv}}{(\gamma+tv)!}=A^{\gamma}\sum_{t}\frac{(t+1)...(t+s)\zeta^{t}}{(\gamma+tv)!}=\\
&=A^{\gamma}(\frac{d}{d\zeta})^s(\zeta^sH_{\gamma}(\zeta)).
\end{align*}

Thus one obtains that 

\begin{align}
\begin{split}
\label{hrn}
&F_{\delta-lr}^{l,0}(1)=\sum_{u\in \mathbb{Z}^k_{\geq 0}}\sum_{0\leq s\leq u}\frac{(-1)^{l+u}}{(l+u+s)!}  (\frac{d}{d\zeta})^s(\zeta^{l+u+s}H_{\delta-(l+u)r}(\zeta))\mid_{\zeta=1}
\end{split}
\end{align}

\begin{remark}
In the formula  \eqref{hrn} one represents   $F_{\delta-lr}^{l}(1)$  as a value at $1$  of a sum of derivatives of a Horn's function. One can also note  that such an expression is also a horn's function. Thus  $F_{\delta-lr}^{l}(1)$ is a value at one of a function of the hypergeometric type that is this is a hypergeometric constant. 
\end{remark}

\end{fulltext}

\end{document}